\documentclass[12pt,reqno,a4paper]{amsart}
\usepackage[margin=.5in]{geometry}
\usepackage{amsmath,amsthm,pdfsync,verbatim,graphicx,epstopdf,enumerate}
\usepackage{placeins}
\usepackage{amsfonts}
\usepackage{amssymb,amsthm,amsmath}
\usepackage{amsmath}
 \usepackage{enumerate}
\usepackage{ifthen}
\usepackage{graphicx}
\usepackage{mathtools}
\mathtoolsset{showonlyrefs}
\usepackage{cite}
\usepackage{tikz} 
\usetikzlibrary{arrows.meta}
\usepackage{comment}
\usepackage{amsmath,amscd,amssymb}
\usepackage{latexsym}
\usepackage[colorlinks,citecolor=blue,pagebackref,hypertexnames=false]{hyperref}

\numberwithin{equation}{section}
\allowdisplaybreaks[2]
\theoremstyle{plain}
\newtheorem{theorem}{Theorem}[section]

\newtheorem{lemma}[theorem]{Lemma}

\newtheorem{proposition}[theorem]{Proposition}

\theoremstyle{definition}
\newtheorem{definition}[theorem]{Definition}

\theoremstyle{remark}
\newtheorem{remark}[theorem]{Remark}

\newtheorem{case[theorem]}{Case}

\def\norm#1.#2.{\lVert#1\rVert_{#2}}

\def\M{\mathcal{M}}

\def \Ad{\mathcal{A}_{k,l}}

\def \M{{\mathcal M}}

\title[Hartee equation for anharmonic oscillator ]{Hartee-type heat equation associated to fractional anharmonic oscillator on weighted modulation spaces}

\author{Aparajita Dasgupta}
\author{Uttam Kumar Dolai}

 \address{\endgraf Department of Mathematics
 Indian Institute of Technology, Delhi, Hauz Khas
New Delhi-110016
 India}
\email{adasgupta@maths.iitd.ac.in}
\address{\endgraf Department of Mathematics
Indian Institute of Technology, Delhi, Hauz Khas
New Delhi-110016
India}
 \email{maz248176@maths.iitd.ac.in}

 \keywords{Fractional generalised anharmonic oscillator, weighted modulation space, heat semigroup, Hartee-type nonlinearity, trilinear estimates, global well-posedness}
 \subjclass[2020]{35K05, 47G30, 42B35, 35A01}

\date{\today}

\begin{document}


\begin{abstract}
We study Hartree-type nonlinear heat equations associated with fractional generalised anharmonic oscillators \(A_{k,l}\) in weighted modulation spaces. We first derive Strichartz-type estimates for the associated heat semigroup and then apply them to establish global well-posedness for small initial data. For \(s>\frac{d}{q'}\), the result is obtained via trilinear estimates exploiting the algebra property of the weighted modulation space, \(M^{p,q}_s\). We further establish refined trilinear estimates that bypass the algebra structure, thereby extending the global well-posedness theory to the wider range \(s\ge 0\).
\end{abstract}

	\maketitle

	\allowdisplaybreaks

	\tableofcontents 
\section{Introduction}

Nonlinear evolution equations involving nonlocal interactions have attracted considerable attention over the last several decades due to their fundamental role in mathematical physics, quantum mechanics, plasma dynamics, and many-body systems. Among these, the nonlinear Schr\"odinger equation with cubic convolution-type nonlinearity associated with the classical Laplacian has been studied extensively over the years. Such equations were first introduced by Ginibre and Velo \cite{MR562582} in the 1980s, motivated by the earlier work of Chadam and Glassey \cite{MR413843}, and are commonly referred to as the Hartree equation. The classical Hartree equation takes the form
\begin{align}
\label{Harteefordelta}
\iota u_{t}+\Delta u= (K*|u|^{2})u, 
\quad 
u(x,t_{0})=u_{0}(x),
\end{align}
where $u(x,t)$ is a complex-valued function on $\mathbb{R}^{d}\times \mathbb{R}$, $\Delta$ denotes the classical Laplacian on $\mathbb{R}^{d}$, $u_{0}$ is the prescribed initial datum, $K$ is a suitable potential defined on $\mathbb{R}^{d}$, and $*$  denotes the convolution operation.

Equation \eqref{Harteefordelta} arises naturally in the mean-field approximation of many-body quantum systems and has been extensively studied in connection with local and global well-posedness, regularity theory, scattering phenomena, blow-up behaviour, and long-time asymptotics; see, for instance, \cite{MR3195768, MR2002047, MR562582, SHYAM2026} and the references therein. Beyond the classical Laplacian setting, Hartree-type nonlinear equations associated with more general operators have also attracted considerable attention; see, for example, \cite{MR4944042, MR3114821, MR4027018, MR3128411}. Related wave equations involving Hartree nonlinearities have likewise been investigated extensively; see \cite{MR4896856, MR4742454, MR5042184}.

Nonlinear Schr\"odinger equations with Hartree-type nonlinearities have also been studied in the framework of modulation spaces; see, for example, \cite{MR4134386, MR3695957, MR3424616}. However, despite the extensive literature on Schr\"odinger-type models, the corresponding nonlinear heat equations remain comparatively less understood, particularly in the context of modulation spaces.

To the best of our knowledge, the well-posedness theory for nonlinear heat equations with Hartree-type interactions has not yet been developed in weighted modulation spaces. In particular, no results appear to be available for Hartree-type heat equations associated with generalised anharmonic oscillators in this setting.

The objective of the present work is to initiate this study. More precisely, we investigate nonlinear heat equations associated with fractional powers of the generalised anharmonic oscillator
\[
\mathcal{A}_{k,l}
:=
A(D)+V(x),
\]
where $A(\xi)$ and $V(x)$ are strictly positive homogeneous polynomials exhibiting growth of order $|\xi|^{2l}$ and $|x|^{2k}$, respectively. In particular, the operator $\mathcal{A}_{k,l}$ belongs to the $(l,k)$-class in the sense of \cite{MR4489248}.

 We consider the Hartree-type heat equation

\begin{align}
\label{harteeintro}
u_t+\Ad^\beta u=(K*|u|^2)u,
\qquad 
u(x,0)=u_0,
\end{align}
where $(x,t)\in \mathbb{R}^d\times [0,\infty)$, $\beta>0$, and the interaction kernel is given by

\[
K(x)=|x|^{-\gamma},
\qquad 
0<\gamma<d.
\]

The class of operators $\mathcal{A}_{k,l}$ forms a natural extension of the classical harmonic oscillator and is usually referred to as the class of \emph{generalised anharmonic oscillators}. Such operators arise naturally in quantum mechanical models whenever harmonic approximations become inadequate. For instance, in molecular vibrational dynamics, purely quadratic confinement often fails to describe accurately the behaviour of the energy spectrum. Higher-order polynomial potentials therefore provide a more realistic modelling framework, leading to substantially richer spectral and dynamical structures.

From the analytical viewpoint, anharmonic oscillators have been extensively investigated in spectral theory and microlocal analysis. For polynomial potentials of even degree, sharp eigenvalue asymptotics and spectral characterisations were obtained by Helffer and Robert \cite{MR657970,MR683006,MR662451}; see also \cite{MR743094,MR2304165,MR2599384} for further developments. More recently, nonsmooth anharmonic oscillators have attracted attention due to the additional analytical difficulties that arise beyond the classical smooth framework \cite{MR3554704}. A substantial generalisation was achieved in \cite{MR4489248}, where operators of the form, $A(D)+V(x)$ were studied within the framework of Weyl--H\"ormander calculus. The generalised anharmonic oscillators considered in the present article naturally belong to this setting, and the associated symbolic calculus and H\"ormander metrics can be developed analogously. Moreover, several operators studied previously in the literature, including those in \cite{MR4299820,MR4944933, dasgupta2026phase}, arise as particular cases of our framework.

Parallel to these developments, modulation spaces have emerged as an important functional setting for the study of evolution equations with rough initial data. Introduced by Feichtinger, modulation spaces capture simultaneously the spatial and frequency localisation properties of functions and have become fundamental objects in time-frequency analysis. Since initial data in modulation spaces are often considerably rougher than those in classical Sobolev or Bessel potential spaces, they provide an effective framework for studying low-regularity phenomena.

Over the past few decades, the classical Cauchy problem for the Hartree equation has been extensively studied in Sobolev spaces; see, for example, \cite{MR3195768, MR2002047, MR562582} and the references therein. The study of Hartree-type equations in modulation spaces has developed rapidly in recent years. Early contributions established local \cite{MR2506839} and global \cite{MR2281189} well-posedness results for initial data in $\M^{p,1}$. One of the key advantages of $\M^{p,1}$ lies in its multiplicative algebra property, which allows efficient treatment of nonlinear terms.  Subsequent works extended these results to the broader class of modulation spaces $\M^{p,q}$; see, for example, \cite{MR3977119,MR4944042,MR4017418}.

Nevertheless, to the best of our knowledge, the nonlinear heat equation \eqref{harteeintro} has not been investigated in weighted modulation spaces $\M_s^{p,q}$.
As a first step toward the analysis of \eqref{harteeintro}, we establish boundedness and smoothing estimates for the heat semigroup generated by \(\Ad^{\beta}\) on weighted modulation spaces. Estimates of this type were first obtained in \cite{MR4313961} for the Hermite operator in modulation spaces. Subsequently, Ruzhansky \emph{et al.} \cite{MR4944933} extended these results to anharmonic oscillators of the form
\[
(-\Delta)^l+|x|^{2k}.
\]

More recently, analogous semigroup estimates for operators of the form
\[
(-\Delta)^l+V(x),
\]
with general polynomial potentials were established in weighted modulation spaces in \cite{dasgupta2026phase}. In the present work, we derive corresponding estimates for the substantially broader class of generalised anharmonic oscillators \(\Ad\). As a consequence, our results encompass and recover all previously known estimates as particular cases.
 
 Building upon the fixed-time dispersive estimates established for the heat semigroup generated by the generalised anharmonic oscillator, we now derive Strichartz-type estimates in weighted modulation spaces. Such estimates have been extensively investigated for Schr\"odinger and wave equations in various functional settings; see, for instance, \cite{MR2350410, MR1646048, MR1351643, MR2060533}. In the context of modulation spaces, Strichartz estimates for Schr\"odinger equations were obtained in \cite{MR2992993, MR3065071}. 

These estimates play a fundamental role in the analysis of nonlinear evolution equations and serve as indispensable tools in the study of local and global existence, well-posedness in low-regularity Sobolev-type spaces, and scattering phenomena; see, for example, \cite{MR2257393, MR1778778, MR2195116}. In the present setting, they provide the analytical framework required to investigate the well-posedness theory of Hartree-type nonlinear heat equations associated with generalised anharmonic oscillators.

The derivation of Strichartz estimates for Schr\"odinger and wave equations is often based on the abstract framework developed by Keel and Tao \cite{MR1646048}. This approach applies since the corresponding solution operators form unitary groups on $L^{2}(\mathbb{R}^{d})$ and satisfy both the energy estimate and the dispersive decay estimate.

However, in the present setting, the solution operator associated with the heat equation generated by $\Ad^{\beta}$ is given by the heat semigroup, $e^{-t\Ad^{\beta}},$ which is not unitary on $L^{2}(\mathbb{R}^{d})$. Consequently, the abstract Strichartz theory of Keel and Tao cannot be applied directly. To overcome this difficulty, we combine the fixed-time dispersive estimates with Young's inequality to establish the required Strichartz-type estimates for the anharmonic heat semigroup.

Combining the boundedness properties of the anharmonic heat semigroup with the Strichartz-type estimates, we prove the existence and uniqueness of global solutions to \eqref{harteeintro} in weighted modulation spaces \(\M^{p,q}_{s}\). Our first approach relies on the multiplicative algebra property of \(\M^{p,q}_{s}\), which holds under the condition
$s>\frac{d}{q'}.$

This property enables us to derive the trilinear estimates required to control the Hartree-type convolution nonlinearity and to implement a contraction mapping argument for the global well-posedness theory. However, this approach imposes restrictions on the admissible range of the parameter \(p\). This naturally motivates the study of the complementary regime
$0\leq s\leq \frac{d}{q'}.$

To treat this case, we establish a new trilinear estimate that is independent of the algebra structure of modulation spaces. This allows us to obtain a second global well-posedness result in the weighted modulation spaces \(\M^{p,q}_{s}\), valid for a substantially broader range of the parameters \(s\) and \(p\).

The paper is organised as follows. In Section~2, we review the necessary preliminaries on modulation spaces, symbolic calculus, and the Weyl--H\"ormander framework. Section~3 is devoted to the boundedness properties of the heat semigroup generated by fractional generalised anharmonic oscillators on weighted modulation spaces. In Section~4, we derive Strichartz-type estimates for the associated anharmonic heat semigroup. Finally, Section~5 addresses the global well-posedness theory for Hartree-type nonlinear heat equations associated with fractional anharmonic operators.

\section{Preliminaries}
This section briefly reviews some fundamental concepts and properties of modulation spaces and the Weyl--H\"ormander calculus that will be used throughout the paper. For the definition and basic properties of modulation spaces, we refer the reader to \cite{feichtinger1983modulation, MR1843717}. In particular, further details related to the material presented in Section~2.1 can be found in \cite{dasgupta2026phase}. A comprehensive treatment of the Weyl--H\"ormander calculus can be found in the monograph of Nicola and Rodino \cite{MR2668420}. For further developments of the Weyl--H\"ormander framework in connection with anharmonic oscillators, we refer to \cite{MR4299820, MR4489248} and the references therein.

\subsection{Modulation Spaces}

We begin by recalling the definition of the short-time Fourier transform (STFT).
Let $f\in\mathcal{S}'(\mathbb{R}^d)$ and let $g\in\mathcal{S}(\mathbb{R}^d)\setminus\{0\}$
be a fixed window function. The STFT of $f$ with respect to $g$ is defined by
\[
V_g f(x,\xi)
= \int_{\mathbb{R}^d} f(z)\,\overline{g(z-x)}\,e^{-2\pi i \xi\cdot z}\,\mathrm{d}z,
\qquad (x,\xi)\in\mathbb{R}^{2d}.
\]

Modulation spaces were introduced by Feichtinger in the 1980s \cite{feichtinger1983modulation}
and constitute a fundamental tool in time-frequency analysis.

A weight function $v$ on $\mathbb{R}^{2d}$ is called \emph{submultiplicative} if
\[
v(x+y)\le v(x)\,v(y), \qquad x,y\in\mathbb{R}^{2d}.
\]
A weight $w$ is said to be \emph{$v$-moderate} if
\[
w(x+y)\le v(x)\,w(y), \qquad x,y\in\mathbb{R}^{2d}.
\]

Of particular importance in this article are the polynomial weights
\[
v_s(x,\xi)
:= \bigl(1+|x|^2+|\xi|^2\bigr)^{\frac{s}{2}},
\qquad s\in\mathbb{R}.
\]
Weights that are $v_s$-moderate for some $s\in\mathbb{R}$ are referred to as
\emph{polynomially moderate}.

Let $0<p,q\le\infty$ and let $w$ be a polynomially moderate weight.
The modulation space $\mathcal{M}^{p,q}_w(\mathbb{R}^d)$ consists of all
$f\in\mathcal{S}'(\mathbb{R}^d)$ such that
\[
\|f\|_{\mathcal{M}^{p,q}_w}
:= \|V_g f\, w\|_{L^{p,q}}
= \left(
\int_{\mathbb{R}^d}
\left(
\int_{\mathbb{R}^d}
|V_g f(x,\xi)\,w(x,\xi)|^p\,\mathrm{d}x
\right)^{\frac{q}{p}}
\mathrm{d}\xi
\right)^{\frac{1}{q}}
<\infty,
\]
with the usual modifications when $p=\infty$ or $q=\infty$.
Here $L^{p,q}(\mathbb{R}^d\times\mathbb{R}^d)$ denotes the mixed Lebesgue space
with (quasi-)norm
\[
\|F\|_{L^{p,q}} := \bigl\|\|F(x,\xi)\|_{L_x^p}\bigr\|_{L_\xi^q}.
\]

When the weight is exactly $v_s$, we use the shorthand notation
\[
\mathcal{M}^{p,q}_s := \mathcal{M}^{p,q}_{v_s}.
\]

Modulation spaces are quasi-Banach spaces (and Banach spaces when $p,q\ge1$),
and their definition is independent of the choice of window function $g$,
in the sense that different window functions yield equivalent norms.
We refer to
\cite{MR4286055,MR4201879,MR2028532,MR1843717}
for proofs and further details.

We conclude this subsection by recalling several properties of modulation spaces
that will be used throughout the paper.

\begin{lemma}[\cite{MR4849356}]
\label{Inclusionrelation}
Let $1\le p,p_1,p_2,q,q_1,q_2\le\infty$ and $s,s_1,s_2\in\mathbb{R}$. Then:
\begin{enumerate}
\item If $p,q<\infty$, then $\mathcal{S}(\mathbb{R}^d)$ is dense in
$\mathcal{M}^{p,q}_s(\mathbb{R}^d)$ and
\[
\bigl(\mathcal{M}^{p,q}_s(\mathbb{R}^d)\bigr)'
= \mathcal{M}^{p',q'}_{-s}(\mathbb{R}^d),
\]
where $p'$ and $q'$ denote the conjugate exponents of $p$ and $q$.

\item If $p_1\le p_2$, $q_1\le q_2$, and $s_1\ge s_2$, then
\[
\mathcal{M}^{p_1,q_1}_{s_1}(\mathbb{R}^d)
\hookrightarrow
\mathcal{M}^{p_2,q_2}_{s_2}(\mathbb{R}^d).
\]

\item If $s>\frac{d}{q'}$, or if $q=1$ and $s\ge0$, then
$\mathcal{M}^{p,q}_s(\mathbb{R}^d)\subset C(\mathbb{R}^d)$ and
$\mathcal{M}^{p,q}_s(\mathbb{R}^d)$ is a multiplicative algebra.
More precisely, there exists a constant $C>0$ such that
\[
\|fg\|_{\mathcal{M}^{p,q}_s}
\le C\,
\|f\|_{\mathcal{M}^{p,q}_s}\,
\|g\|_{\mathcal{M}^{p,q}_s},
\qquad
f,g\in\mathcal{M}^{p,q}_s(\mathbb{R}^d).
\]
\end{enumerate}
\end{lemma}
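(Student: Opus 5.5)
The lemma collects three standard facts about weighted modulation spaces, and I would prove them separately, working throughout with a fixed Gaussian window $g$. Two tools do most of the work: the STFT inversion formula $f=\|g\|_{L^2}^{-2}\int_{\mathbb{R}^{2d}} V_gf(x,\xi)\,M_\xi T_x g\,\mathrm{d}x\,\mathrm{d}\xi$, and the change-of-window estimate $|V_{g_1}f|\le \|g\|_{L^2}^{-2}\,|V_gf|*|V_{g_1}g|$. The second yields norm equivalence for different windows because $v_s$ is moderate with respect to the submultiplicative weight $v_{|s|}$.

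For part (1), I would first show density of $\mathcal{S}$ when $p,q<\infty$. Given $f\in\mathcal{M}^{p,q}_s$, truncate $V_gf$ to a compact set $K_n\subset\mathbb{R}^{2d}$ and synthesize $f_n=V_g^*(\chi_{K_n}V_gf)$, which is a Schwartz function. The synthesis operator $V_g^*$ is bounded from $L^{p,q}_{v_s}$ to $\mathcal{M}^{p,q}_s$, and $\chi_{K_n}V_gf\to V_gf$ in $L^{p,q}_{v_s}$ by dominated convergence, so $f_n\to f$. For duality, the pairing $\langle f,h\rangle=\|g\|^{-2}\int V_gf\,\overline{V_gh}$ together with Hölder's inequality in mixed norms gives $\mathcal{M}^{p',q'}_{-s}\hookrightarrow(\mathcal{M}^{p,q}_s)'$. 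For the reverse inclusion, extend a functional $\ell$ on $V_g(\mathcal{M}^{p,q}_s)\subset L^{p,q}_{v_s}$ by Hahn--Banach, represent it by some $H\in L^{p',q'}_{1/v_s}$ using the mixed-norm Riesz representation, and set $h=V_g^*H$. The key ingredient here is that $V_gV_g^*$ is a bounded projection.

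For part (2), the weight inclusion is immediate from $v_{s_2}\le v_{s_1}$. For the exponents, I would use the reproducing formula $V_gf=\|g\|^{-2}\,V_gf\natural V_gg$, so that $|V_gf|\lesssim |V_gf|*|V_gg|$. Young's inequality in each variable separately, with the mixed-norm version $L^{p_1,q_1}*L^{r,t}\subset L^{p_2,q_2}$ and $V_gg\in\mathcal{S}(\mathbb{R}^{2d})$, then raises the exponents while keeping the weight by moderateness. Alternatively, one can discretize via the uniform frequency decomposition $\|f\|\asymp\|\,\langle k\rangle^s\|\square_kf\|_{L^p}\|_{\ell^q}$ (valid up to equivalence of weights) and apply Bernstein's inequality together with $\ell^{q_1}\subset\ell^{q_2}$.

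For part (3), which is the main step, I would use the frequency-uniform decomposition $\square_k$. Continuity follows from $\|\widehat f\|_{L^1}\lesssim\sum_k\|\square_kf\|_{L^\infty}\lesssim\sum_k\langle k\rangle^{s}\|\square_kf\|_{L^p}\,\langle k\rangle^{-s}$, combined with Hölder in $k$: the factor $\langle k\rangle^{-s}$ lies in $\ell^{q'}$ exactly when $s>d/q'$, and in the case $q=1$, $s\ge0$ no such factor is needed. For the algebra property, write $\square_k(fg)=\sum_{i,j:\,i+j\approx k}\square_k(\square_if\,\square_jg)$ and estimate $\|\square_if\,\square_jg\|_{L^p}\le\|\square_if\|_{L^p}\|\square_jg\|_{L^\infty}\lesssim\|\square_if\|_{L^p}\|\square_jg\|_{L^p}$. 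Then use $\langle k\rangle^s\lesssim\langle i\rangle^s+\langle j\rangle^s$ and split into the two regions $|i|\ge|j|$ and $|i|<|j|$, which reduces to the discrete convolution $\ell^q_s*\ell^1\subset\ell^q_s$. The hardest point is controlling $\sum_j\|\square_jg\|_{L^p}$, i.e. showing $\ell^q_s\subset\ell^1$. This holds by Hölder when $s>d/q'$ and trivially when $q=1$, and this is precisely where the hypothesis enters. Since the statement is quoted from \cite{MR4849356}, one could of course simply cite it, but the plan above is the self-contained route.
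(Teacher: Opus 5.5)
The paper gives no argument for this lemma; it simply quotes it from \cite{MR4849356}. Your main routes for parts (1) and (2) are sound: density via truncation and $V_g^*$, duality via Hahn--Banach and mixed-norm Riesz representation, and the inclusion via $|V_gf|\lesssim|V_gf|*|V_gg|$ plus mixed-norm Young. These only use that $v_s$ is $v_{|s|}$-moderate.

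\textbf{The gap is in part (3).} There you switch to the frequency-uniform decomposition and assert $\|f\|_{\mathcal{M}^{p,q}_s}\asymp\bigl\|\langle k\rangle^s\|\square_kf\|_{L^p}\bigr\|_{\ell^q}$ ``up to equivalence of weights''. That characterises the space with weight $\langle\xi\rangle^s=(1+|\xi|^2)^{s/2}$. In this paper, however, $\mathcal{M}^{p,q}_s$ carries the phase-space weight $v_s(x,\xi)=(1+|x|^2+|\xi|^2)^{s/2}\asymp\langle x\rangle^s+\langle\xi\rangle^s$. The two norms are not equivalent for $s>0$. For a translated Gaussian $T_{x_0}\varphi$, the $\mathcal{M}^{p,q}_s$-norm is $\gtrsim\langle x_0\rangle^s$, while $\|\square_kT_{x_0}\varphi\|_{L^p}=\|\square_k\varphi\|_{L^p}$ does not depend on $x_0$. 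So your argument proves the algebra property of the strictly larger space $\mathcal{M}^{p,q}_{\langle\xi\rangle^s}$, and it never controls the $\langle x\rangle^s$-part of the weight of $fg$. Continuity is unaffected, because $v_s\ge\langle\xi\rangle^s$ gives $\mathcal{M}^{p,q}_s\hookrightarrow\mathcal{M}^{p,q}_{\langle\xi\rangle^s}$. Your alternative $\square_k$ proof of (2) has the same defect, but you do not need it.

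\textbf{How to fix it.} The missing step is easiest on the STFT side. With the window $g^2$ one has
\[
V_{g^2}(fh)(x,\xi)=\int_{\mathbb{R}^d}V_gf(x,\eta)\,V_gh(x,\xi-\eta)\,\mathrm{d}\eta,
\qquad
v_s(x,\xi)\lesssim v_s(x,\eta)+v_s(x,\xi-\eta)\quad (s\ge 0).
\]
Minkowski's inequality in $x$ and Young's inequality in $\xi$ then give
\[
\|fh\|_{\mathcal{M}^{p,q}_s}\lesssim\|f\|_{\mathcal{M}^{p,q}_s}\|h\|_{\mathcal{M}^{\infty,1}}+\|f\|_{\mathcal{M}^{\infty,1}}\|h\|_{\mathcal{M}^{p,q}_s}.
\]
The embedding $\mathcal{M}^{p,q}_s\hookrightarrow\mathcal{M}^{p,1}\hookrightarrow\mathcal{M}^{\infty,1}$ follows from H\"older in $\xi$ and part (2). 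Here you use $v_s\ge\langle\xi\rangle^s$ and $\langle\xi\rangle^{-s}\in L^{q'}$ iff $sq'>d$ (trivially true if $q=1$, $s\ge0$), and this is exactly where the hypothesis enters. The same argument adapts to the anisotropic weight $(q_1+V(x)+A(\xi))^{s/2}$ used later in the paper.

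\textbf{A separate slip in the continuity step.} It rests on a false inequality: $\|\widehat f\|_{L^1}\lesssim\sum_k\|\square_kf\|_{L^\infty}$ fails already for $f\equiv 1$, whose right-hand side is finite while $\widehat f=\delta$. What $\sum_k\|\square_kf\|_{L^\infty}<\infty$ actually gives is uniform convergence of $f=\sum_k\square_kf$, a series of continuous functions. That is what yields $f\in C(\mathbb{R}^d)$.
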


\subsection{A Modulation Weight Adapted to $\Ad$}

Following the approach adopted in \cite{MR4944933,dasgupta2026phase}, we introduce a modulation weight naturally associated with the operator \(\Ad\). Let \(k,l \geq 1\) be integers and consider the generalised anharmonic oscillator
\[
\Ad=A(D)+V(x),
\]
where the polynomials   $A(D), V(x)$ have growth of order $|\xi|^{2l} \text{ and } |x|^{2k}$ respectively.
We define a phase-space weight by
\[
\widetilde v(x,\xi)
:= (q_{1}+V(x)+A(\xi))^{1/2},
\qquad (x,\xi)\in\mathbb R^d\times\mathbb R^d,
\]
where $q_{1}>0$ is fixed. Using a similar argument as in \cite{dasgupta2026phase} it can be shown that 
for all $x,y,\xi,\eta\in\mathbb R^d$, there exists a constant $C>1$ such that
\begin{align}
\widetilde v(x,\xi)\,\widetilde v(y,\eta)
\gtrsim \widetilde v(x+y,\xi+\eta).
\label{eq:submultiplicative-weight}
\end{align}
Setting $v:=C\,\widetilde v$ therefore yields a submultiplicative weight on phase-pace.
This property guarantees the stability of the associated modulation spaces under time-frequency
shifts and is essential for the boundedness of pseudodifferential operators arising from the
functional calculus of $\mathcal{A}_{k,l}$.

Throughout the paper we work with weighted modulation spaces $ \mathcal{M}^{p,q}_{s}$ defined
with respect to the weight $v_s=(q_{1}+V(x)+A(\xi))^{s/2}$, $s\in\mathbb R$. The choice of $v$ allows us to treat
fractional anharmonic semigroups within a unified phase-space framework, extending beyond
classical harmonic and Gaussian settings.

\subsection{Symbolic Calculus}

In this subsection, we recall several notions and symbol classes that will be used frequently in the sequel. We refer to \cite{MR2668420} for further details and background.

A positive continuous function $\Phi(x,\xi)$ on $\mathbb{R}^{2d}$ is called a
\emph{sublinear weight} if
\begin{equation}
1 \le \Phi(x,\xi) \lesssim 1+|x|+|\xi|,
\qquad x,\xi\in\mathbb{R}^d.
\end{equation}
It is called a \emph{temperate weight} if there exists $s>0$ such that
\begin{equation}\label{temperate}
\Phi(x+y,\xi+\eta)
\lesssim \Phi(x,\xi)\,(1+|y|+|\eta|)^s,
\qquad x,\xi,y,\eta\in\mathbb{R}^d.
\end{equation}
Products and real powers of temperate weights are again temperate, and one always has
\[
(1+|y|+|\eta|)^{-s}
\lesssim \Phi(y,\eta)
\lesssim (1+|y|+|\eta|)^s.
\]

The weights of interest in this work are of the form
\[
\Phi(x,\xi)
:= q_{1} + V(x) + A(\xi),
\]
where $q_{1}>0$ is chosen such that $\Phi(x,\xi)\ge 1$, and where $A(\xi), V(x)$ are  strictly positive  polynomials having growth of order $2l$ and $2k$ respectively. Proceeding as in \cite{dasgupta2026phase} we can prove that $\Phi$ is a temperate weight.
 Consequently, $\Phi^s$ is temperate for every $s\in\mathbb{R}$.

\begin{definition}[Symbol class]
\label{Symboldefinition}
Let $\Phi(x,\xi)$ and $\Psi(x,\xi)$ be sublinear, temperate weights, and let
$M(x,\xi)$ be a temperate weight. The symbol class $S(M;\Phi,\Psi)$ consists of
all functions $a\in C^\infty(\mathbb{R}^{2d})$ such that for every
$\alpha,\beta\in\mathbb{N}^d$,
\[
|\partial_\xi^\alpha \partial_x^\beta a(x,\xi)|
\lesssim
M(x,\xi)\,\Psi(x,\xi)^{-|\alpha|}\,\Phi(x,\xi)^{-|\beta|},
\qquad (x,\xi)\in\mathbb{R}^{2d}.
\]
\end{definition}

The family of seminorms
\[
\|a\|_{k,S(M;\Phi,\Psi)}
:=
\sup_{|\alpha|+|\beta|\le k}
\sup_{(x,\xi)\in\mathbb{R}^{2d}}
|\partial_\xi^\alpha \partial_x^\beta a(x,\xi)|
M(x,\xi)^{-1}\Psi(x,\xi)^{|\alpha|}\Phi(x,\xi)^{|\beta|},
\]
with $k\in\mathbb{N}$, endows $S(M;\Phi,\Psi)$ with a Fr\'echet space topology.

\begin{definition}
A symbol $a$ is called \emph{globally elliptic} in $S(M;\Phi,\Psi)$ if
$a\in S(M;\Phi,\Psi)$ and there exists $R>0$ such that
\begin{equation}\label{eqn1}
|a(x,\xi)| \gtrsim M(x,\xi),
\qquad |x|+|\xi|\ge R.
\end{equation}
\end{definition}

\begin{definition}
A symbol $a\in S(M;\Phi,\Psi)$ is called \emph{globally hypoelliptic} if there exists
a temperate weight $M_0(x,\xi)$ and $R>0$ such that
\begin{equation}\label{eqn2}
|a(x,\xi)| \gtrsim M_0(x,\xi),
\qquad |x|+|\xi|\ge R,
\end{equation}
and for all $\alpha,\beta\in\mathbb{N}^d$,
\begin{equation}\label{eqn3}
|\partial_\xi^\alpha \partial_x^\beta a(x,\xi)|
\lesssim
|a(x,\xi)|\,\Psi(x,\xi)^{-|\alpha|}\,\Phi(x,\xi)^{-|\beta|},
\qquad |x|+|\xi|\ge R.
\end{equation}
We denote this class by $\mathrm{Hypo}(M,M_0;\Phi,\Psi)$.
\end{definition}

\begin{remark}
\label{remark}
If \eqref{eqn2} holds with $M_0=M$, then $a$ is elliptic.
Conversely, every elliptic symbol in $S(M;\Phi,\Psi)$ is hypoelliptic, since
\eqref{eqn2} holds trivially and \eqref{eqn3} follows from the defining estimates
of the symbol class.
\end{remark}



Furthermore, we recall several basic definitions and notations from the Weyl--H\"ormander calculus in order to keep the presentation self-contained. For a detailed treatment of the theory and its applications, we refer the reader to \cite{MR2304165,MR2599384,MR1011988, dasgupta2026phase}.

The Weyl quantisation of a symbol
$a\in\mathcal{S}'(\mathbb{R}^d\times\mathbb{R}^d)$
is defined by
\[
a^{w}(x,D)u(x)
:= \frac{1}{(2\pi)^d}
\int_{\mathbb{R}^d}\int_{\mathbb{R}^d}
e^{i\langle x-y,\xi\rangle}
a\!\left(\frac{x+y}{2},\xi\right)
u(y)\,\mathrm{d}y\,\mathrm{d}\xi,
\qquad u\in\mathcal{S}(\mathbb{R}^d).
\]

More generally, for $t\in\mathbb{R}$, the $t$-quantisation of $a$ is given by
\[
a_t(x,D)u(x)
:= \frac{1}{(2\pi)^d}
\int_{\mathbb{R}^d}\int_{\mathbb{R}^d}
e^{i\langle x-y,\xi\rangle}
a(tx+(1-t)y,\xi)
u(y)\,\mathrm{d}y\,\mathrm{d}\xi.
\]
In particular, the Weyl quantisation corresponds to $t=\frac12$, that is,
$a_{\frac12}(x,D)=a^{w}(x,D)$, while the choice $t=1$ yields the
Kohn-Nirenberg quantisation,
\[
a(x,D)u(x)
:= a_1(x,D)u(x)
= \frac{1}{(2\pi)^d}
\int_{\mathbb{R}^d}
e^{i\langle x,\xi\rangle}
a(x,\xi)\,\widehat{u}(\xi)\,\mathrm{d}\xi.
\]

We now recall the notion of a H\"ormander metric, which will play a crucial
role in the analysis of the generalised anharmonic oscillator and its
fractional powers.

\begin{definition}[H\"ormander metric]\label{HM}
Let $X\in\mathbb{R}^{2d}$ and let $g_X(\cdot)$ be a positive definite quadratic
form on $\mathbb{R}^{2d}$. The family $g=(g_X)_{X\in\mathbb{R}^{2d}}$ is called a
\emph{H\"ormander metric} if the following conditions hold:
\begin{enumerate}
\item[\rm (I)] \textbf{Continuity (slowness).}
There exists $C>0$ such that
\[
g_X(X-Y)\le C^{-1}
\quad\Longrightarrow\quad
\left(\frac{g_X(T)}{g_Y(T)}\right)^{\pm1}\le C,
\]
for all $T\in\mathbb{R}^{2d}\setminus\{0\}$.

\item[\rm (II)] \textbf{Uncertainty principle.}
Let $\sigma(Y,Z)=\langle z,\eta\rangle-\langle y,\zeta\rangle$
denotes the symplectic form and define
\[
g_X^{\sigma}(T)
:= \sup_{W\neq0}\frac{\sigma(T,W)^2}{g_X(W)}.
\]
We require that
\[
\lambda_g(X)
:= \inf_{T\neq0}\left(\frac{g_X^{\sigma}(T)}{g_X(T)}\right)^{1/2}
\ge 1,
\qquad X\in\mathbb{R}^{2d}.
\]

\item[\rm (III)] \textbf{Temperateness.}
There exist $\overline{C}>0$ and $J\in\mathbb{N}$ such that
\[
\left(\frac{g_X(T)}{g_Y(T)}\right)^{\pm1}
\le \overline{C}\bigl(1+g_Y^{\sigma}(X-Y)\bigr)^J,
\]
for all $X,Y,T\in\mathbb{R}^{2d}$.
\end{enumerate}
\end{definition}

Associated with a H\"ormander metric $g$ is the \emph{Planck function}
\[
h_g(X)^2
:= \sup_{T\neq0}\frac{g_X(T)}{g_X^{\sigma}(T)},
\]
which satisfies $h_g(X)=\lambda_g(X)^{-1}$.

\begin{definition}[$g$-weight]\label{GW}
Let $M:\mathbb{R}^{2d}\to(0,\infty)$.
\begin{itemize}
\item $M$ is called \emph{$g$-continuous} if there exists $\tilde{C}>0$ such that
\[
g_X(X-Y)\le \tilde{C}^{-1}
\quad\Longrightarrow\quad
\left(\frac{M(X)}{M(Y)}\right)^{\pm1}\le \tilde{C}.
\]

\item $M$ is called \emph{$g$-temperate} if there exists $\tilde{C}>0$ and
$N\in\mathbb{N}$ such that
\[
\left(\frac{M(X)}{M(Y)}\right)^{\pm1}
\le \tilde{C}\bigl(1+g_Y^{\sigma}(X-Y)\bigr)^N.
\]
\end{itemize}
We say that $M$ is a \emph{$g$-weight} if it is both $g$-continuous and
$g$-temperate.
\end{definition}

\begin{definition}
Let $g$ be a H\"ormander metric and let $M$ be a $g$-weight.
The symbol class $S(M,g)$ consists of all functions
$\sigma\in C^\infty(\mathbb{R}^{2d})$ such that for every $k\in\mathbb{N}$
there exists $C_k>0$ with
\begin{equation}\label{inwhk}
|\sigma^{(k)}(X;T_1,\ldots,T_k)|
\le C_k\,M(X)\prod_{j=1}^k g_X^{1/2}(T_j),
\end{equation}
for all $X,T_1,\ldots,T_k\in\mathbb{R}^{2d}$.
\end{definition}

The smallest constant $C_k$ in \eqref{inwhk} defines a seminorm
$\|\sigma\|_{k,S(M,g)}$, and the corresponding family
$\{\|\cdot\|_{k,S(M,g)}\}_{k\in\mathbb{N}}$ endows $S(M,g)$ with a Fr\'echet space
structure.

\subsection{Hörmander Metric Associated with $\mathcal A_{k,l}$}

We now recall the Hörmander metric intrinsically adapted to the operator $\mathcal A_{k,l}$.
Define
\[
g:=g^{k,l}
= \frac{dx^2}{(q_{1}+V(x)+ A(\xi))^{1/k}}
+ \frac{d\xi^2}{(q_{1}+V(x)+A(\xi))^{1/l}},
\]
where $q_{1}>0$ is chosen so that $q_{1}+V(x)+A(\xi)\ge1$ for all
$(x,\xi)\in\mathbb R^d\times\mathbb R^d$.
This metric encodes the quasi-homogeneous interaction between the spatial and frequency
variables.

It was shown in \cite{MR4299820,MR4489248} that $g$ is a Hörmander metric. Moreover, the
function
\[
M(x,\xi):=q_{1}+V(x)+A(\xi)
\]
is a $g$-weight in the sense of Hörmander's symbolic calculus. Consequently, the symbol class
\[
S(M,g)=S\bigl(q_{1}+V(x)+A(\xi),\,g\bigr)
\]
is well suited for the analysis of pseudodifferential operators associated with
$\mathcal A_{k,l}$ and its fractional powers.

The corresponding uncertainty parameter is given by
\[
\lambda_g
= (q_{1}+V(x)+A(\xi))^{\frac{k+l}{2kl}},
\]
and the associated Planck function $h_g$ satisfies
\[
h_g^{-1}=\lambda_g \simeq v^{\frac{k+l}{kl}},
\]
which establishes a precise quantitative link between the phase-space geometry induced by $g$
and the modulation weight $v$.


As discussed in \cite{MR4489248}, an equivalent description of the Hörmander symbol class
$S((q_{1}+V(x)+A(\xi))^{m/2},g)$ can be given in terms of anisotropic derivative estimates.
A symbol $a\in C^\infty(\mathbb R^d\times\mathbb R^d)$ is said to belong to the class
$\Sigma^m_{k,l}$ if
\begin{align}
\label{symbolclassdef}
|\partial_x^\beta \partial_\xi^\alpha a(x,\xi)|
\le C_{\alpha,\beta}
\bigl(q_{1}+V(x)+A(\xi)\bigr)^{\frac{m}{2}-\frac{|\beta|}{2k}-\frac{|\alpha|}{2l}},
\end{align}
for all multi-indices $\alpha,\beta$ and all $(x,\xi)\in\mathbb R^d\times\mathbb R^d$.

By Proposition~4.2 in \cite{MR4489248}, these classes coincide, namely
\[
\Sigma^m_{k,l}
= S\bigl((q_{1}+V(x)+A(\xi))^{m/2},\,g\bigr).
\]
Equipped with the natural family of seminorms, $\Sigma^m_{k,l}$ is a Fréchet space.

\section{Boundedness of Heat Semigroup  of Fractional Anharmonic Oscillator in Weighted Modulation Spaces}
This section is devoted to obtain a fixed time-estimate on weighted modulation spaces for the heat semigroup generated by anharmonic oscillator $\Ad=A(D)+V(x)$. Such estimate has already been established in \cite{dasgupta2026phase} for the operator of the form $(-\Delta)^{l}+V(x)$ which is a particular case of the operator $\Ad$ with $A(\xi)=|\xi|^{2l}$. Though most of the results are just  verbatim with a minor notational changes to the corresponding results in \cite{dasgupta2026phase}, for the sake of completeness we prefer to present those results here as well but skipping a few details.

As observed in \cite{MR4299820}, the anisotropic symbol classes discussed in Section~2 fit naturally into the general framework $S(M;\Phi,\Psi)$ of Nicola--Rodino \cite{MR2668420}.
Let
\[
k_0 := \max\{k,l\},
\]
the metric $g=g^{(k,l)}$ satisfies
\[
g
\le
\frac{\mathrm{d}x^{2}}{(q_{1}+V(x)+A(\xi))^{1/k_0}}
+
\frac{\mathrm{d}\xi^{2}}{(q_{1}+V(x)+A(\xi))^{1/k_0}}.
\]
Define
\[
\Phi(x,\xi)=\Psi(x,\xi)=(q_{1}+V(x)+A(\xi))^{\frac{1}{2k_0}},
\qquad
M(x,\xi)=q_{1}+V(x)+A(\xi).
\]
As discussed above, $\Phi$ and $\Psi$ are sublinear, temperate weights, and $M$ is temperate. Consequently, one can consider the symbol class $S(M;\Phi,\Psi)$, and we obtain the continuous embedding
\begin{align}
\label{Symbolinclusion}
S\bigl((q_{1}+V(x)+A(\xi))^{\frac{m}{2}},g\bigr)
\subset
S\bigl(M^{\frac{m}{2}};\Phi,\Psi\bigr).
\end{align}

Indeed, if $a\in S\bigl((q_{1}+V(x)+A(\xi))^{\frac{m}{2}},g\bigr)$, then by \eqref{symbolclassdef},
\begin{align*}
|\partial_x^{\beta}\partial_\xi^{\alpha} a(x,\xi)|
\le
C_{\alpha,\beta}(q_{1}+V(x)+A(\xi))^{\frac{m}{2}-\frac{|\beta|}{2k_0}-\frac{|\alpha|}{2k_0}}
=
C_{\alpha,\beta}\,M^{\frac{m}{2}}\Psi^{-|\alpha|}\Phi^{-|\beta|},
\end{align*}
which proves \eqref{Symbolinclusion}.

Using Lemma~5.4 of \cite{MR4489248}, it follows that,
\[
\mathcal{A}_{k,l}(x,\xi)\in\Sigma^{2}_{k,l},
\]
and therefore
\[
\mathcal{A}_{k,l}(x,\xi)\in S(M;\Phi,\Psi).
\]
Finally, there exist constants $C>0$ and $R>0$ such that
\[
A(\xi)+V(x)
\ge
C\,(q_{1}+V(x)+A(\xi)),
\qquad
|x|+|\xi|\ge R,
\]
which shows that $\Ad$ is elliptic.

Therefore, by Remark~\ref{remark}, we conclude that
\[
\mathcal{A}_{k,l}(x,\xi)\in \mathrm{Hypo}(M,M_{0};\Phi,\Psi),
\]
where $M, \Phi, \Psi$ are as above and 
\[
M_{0}(x,\xi)=q_{1}+V(x)+A(\xi).
\]
As a consequence of Theorem~4.2.9 in \cite{MR2668420}, the closure $\overline{\mathcal{A}_{k,l}}$ on $L^{2}(\mathbb{R}^{d})$ possesses a purely discrete spectrum consisting of eigenvalues tending to $+\infty$. Furthermore, since both $A(D)$ and the potential $V(x)$ are strictly positive operators, every eigenvalue of $\overline{\mathcal{A}_{k,l}}$ is strictly positive. In particular, $0$ does not belong to the spectrum of $\overline{\mathcal{A}_{k,l}}$
and hence $\mathcal{A}_{k,l}$ is invertible on $L^{2}(\mathbb{R}^{d})$.

Moreover, each eigenvalue of $\overline{\mathcal{A}_{k,l}}$ has finite multiplicity, and the associated eigenfunctions belong to the Schwartz class $\mathcal{S}(\mathbb{R}^{d})$. Consequently, the space $L^{2}(\mathbb{R}^{d})$ admits a complete orthonormal basis formed by eigenfunctions of $\overline{\mathcal{A}_{k,l}}$.

Let $\{\lambda_j\}_{j\geq 0}$ denote the sequence of eigenvalues of $\mathcal{A}_{k,l}$, arranged in nondecreasing order and counted according to multiplicity, with $\lambda_0>0$ representing the smallest eigenvalue. Due to the polynomial growth of $A(\xi)$ and $V(x)$, together with the quasi-homogeneous nature of the symbol $A(\xi)+V(x)$, the eigenvalue asymptotics established in Section~5 of \cite{MR4299820} for the harmonic oscillator $-\Delta+|x|^{2}$ extend naturally to the operator $\mathcal{A}_{k,l}$. More precisely, there exists a constant $C_{k,l}>0$, depending only on $k$ and $l$, such that
\begin{equation}
\label{eigenvalueestimate}
\lambda_j
\sim
C_{k,l}\, j^{\frac{2kl}{d(k+l)}},
\qquad \text{as } j\to\infty.
\end{equation}

Let $\{\Phi_{j,i}\}_{i=1}^{d_j}$ be an orthonormal basis of the eigenspace $H_j$ associated with the eigenvalue $\lambda_j$, where $d_j=\dim H_j$ denotes its multiplicity. Denoting by $P_j$ the orthogonal projection onto $H_j$, the spectral decomposition of $\mathcal{A}_{k,l}$ is given by
\begin{align*}
\mathcal{A}_{k,l}f
=
\sum_{j=0}^{\infty}\lambda_j P_jf,
\qquad
P_jf
=
\sum_{i=1}^{d_j}
\langle f,\Phi_{j,i}\rangle \Phi_{j,i},
\end{align*}
where $\langle\cdot,\cdot\rangle$ denotes the inner product on $L^{2}(\mathbb{R}^{d})$.

For any $\beta\in\mathbb{R}$, the fractional powers of $\mathcal{A}_{k,l}$ are defined through the spectral theorem as
\begin{align*}
\mathcal{A}_{k,l}^{\beta}f
=
\sum_{j=0}^{\infty}
\lambda_j^{\beta}P_jf.
\end{align*}

We next introduce a family of function spaces naturally adapted to the generalised anharmonic oscillator, referred to as the generalised anharmonic Sobolev spaces.

Let $\mathcal{A}_{k,l}$ denote the generalised anharmonic oscillator. For $s\in\mathbb{R}$ and $k,l\in\mathbb{N}$, we define the associated Sobolev space $Q^{s}_{k,l}$ by
\begin{align*}
Q^{s}_{k,l}
:=
\Bigl\{
u\in\mathcal{S}'(\mathbb{R}^{d})
:
\|(\mathcal{A}_{k,l})^{\frac{s}{2}}u\|_{L^{2}(\mathbb{R}^{d})}
<\infty
\Bigr\}.
\end{align*}
In the special case $A(\xi)=|\xi|^{2}$ and $V(x)=|x|^{2}$, the space $Q^{s}_{k,l}$ reduces to the classical Shubin--Sobolev space, also known as the Hermite--Sobolev space.

By means of the spectral decomposition of $\mathcal{A}_{k,l}$, the norm on $Q^{s}_{k,l}$ admits the equivalent representation
\begin{align*}
\|f\|^{2}_{Q^{s}_{k,l}}
:=
\|(\mathcal{A}_{k,l})^{\frac{s}{2}}f\|^{2}_{L^{2}(\mathbb{R}^{d})}
=
\sum_{j=0}^{\infty}
\lambda_{j}^{s}
\,
\|P_{j}f\|^{2}_{L^{2}(\mathbb{R}^{d})},
\end{align*}
where $\{\lambda_j\}_{j\geq 0}$ denotes the eigenvalues of $\mathcal{A}_{k,l}$ and $P_j$ the orthogonal projection onto the eigenspace corresponding to $\lambda_j$. Consequently, $Q^{s}_{k,l}$ may be characterised as the class of tempered distributions whose spectral coefficients satisfy a weighted $\ell^{2}$ summability condition determined by the spectral growth of $\mathcal{A}_{k,l}$.

Equipped with the sesquilinear form
\begin{align*}
\langle u,v\rangle_{Q^{s}_{k,l}}
:=
\Bigl\langle
(\mathcal{A}_{k,l})^{\frac{s}{2}}u,\,
(\mathcal{A}_{k,l})^{\frac{s}{2}}v
\Bigr\rangle_{L^{2}(\mathbb{R}^{d})},
\end{align*}
the space $Q^{s}_{k,l}$ becomes a Hilbert space. Moreover, its dual space can be naturally identified with $Q^{-s}_{k,l}$ through the $L^{2}$ pairing.

The proof of the following lemma follows verbatim from Lemma~4.4.19 in \cite{MR4201879}, upon replacing the standard weight by the modulation weight $v_s$ and invoking the embedding and duality properties of the spaces $Q^{s}_{k,l}$. We therefore omit the details.

\begin{lemma}
\label{anharmonicsobolevspace}
For every $s\in\mathbb{R}$,
\[
\mathcal{M}^{2,2}_{s}
=
Q^{s}_{k,l},
\]
with equivalence of norms.
\end{lemma}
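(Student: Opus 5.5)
The plan is to reduce both spaces to weighted $L^2$ conditions and then use the pseudodifferential calculus for $\mathcal{A}_{k,l}$. Since $p=q=2$, the norm of $\mathcal{M}^{2,2}_s$ is $\|V_gf\,v_s\|_{L^2(\mathbb{R}^{2d})}$ with $v_s=(q_1+V(x)+A(\xi))^{s/2}$. This is a Hilbert space, and it is the natural Shubin-type space attached to the weight $M^{s/2}$, where $M=q_1+V+A$. The key object is therefore an isomorphism $T_s$ that lowers regularity by $s$ on both scales simultaneously.

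\textbf{Step 1 (the case $s=2m$, $m\in\mathbb{N}$).} By the spectral theorem, $\|f\|_{Q^{2m}_{k,l}}=\|\mathcal{A}_{k,l}^m f\|_{L^2}$, and $\mathcal{M}^{2,2}_0=L^2$ by Moyal's identity. Since $\mathcal{A}_{k,l}\in\Sigma^2_{k,l}=S(M,g)$, composition gives $\mathcal{A}_{k,l}^m\in\mathrm{Op}(S(M^m,g))$. I would then use the standard fact that an operator with symbol in $S(M^{t},g)$ maps $\mathcal{M}^{2,2}_{r}$ boundedly to $\mathcal{M}^{2,2}_{r-2t}$. This follows from the almost-diagonalisation of Weyl operators in the Gabor frame (cf.\ the argument of Lemma~4.4.19 in \cite{MR4201879}), together with the temperateness and submultiplicativity of $v$ from \eqref{eq:submultiplicative-weight}. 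It yields $\|f\|_{Q^{2m}_{k,l}}\lesssim\|f\|_{\mathcal{M}^{2,2}_{2m}}$.

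For the reverse inequality I use ellipticity. By the discussion preceding the lemma, $\mathcal{A}_{k,l}$ is globally elliptic and invertible on $L^2$, so the parametrix construction gives $\mathcal{A}_{k,l}^{-m}\in\mathrm{Op}(S(M^{-m},g))$. Hence $\|f\|_{\mathcal{M}^{2,2}_{2m}}=\|\mathcal{A}_{k,l}^{-m}\mathcal{A}_{k,l}^mf\|_{\mathcal{M}^{2,2}_{2m}}\lesssim\|\mathcal{A}_{k,l}^mf\|_{L^2}$.

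\textbf{Step 2 (general real $s$).} I would show that the spectral powers $\mathcal{A}_{k,l}^{s/2}$ are pseudodifferential operators with symbol in $S(M^{s/2},g)$. This is the Helffer–Robert type functional calculus for hypoelliptic operators in a Hörmander metric; see \cite{MR2668420}, Thm.~4.2.9 and the surrounding results, applied via \eqref{Symbolinclusion} or directly in $S(M,g)$. With this in hand, the argument of Step 1 applies verbatim with $m$ replaced by $s/2$, using $\mathcal{A}_{k,l}^{-s/2}\mathcal{A}_{k,l}^{s/2}=I$.

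An alternative for $s>0$ is to interpolate between even integers. Complex interpolation of the weighted $L^2$ spaces $\mathcal{M}^{2,2}_{s}$ is exact, and so is interpolation of the $Q^s_{k,l}$ spaces, since both are weighted $\ell^2$ or $L^2$ scales. Negative $s$ then follows by duality: Lemma~\ref{Inclusionrelation}(1) gives $(\mathcal{M}^{2,2}_s)'=\mathcal{M}^{2,2}_{-s}$, and likewise $(Q^s_{k,l})'=Q^{-s}_{k,l}$ under the same $L^2$ pairing. I would fall back on this route if the functional calculus in Step 2 proves hard to cite precisely.

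\textbf{Main obstacle.} The hardest step is the boundedness $\mathrm{Op}(S(M^{t},g)):\mathcal{M}^{2,2}_{r}\to\mathcal{M}^{2,2}_{r-2t}$ for this anisotropic metric. The metric $g$ is not the isotropic Shubin metric, so almost-diagonalisation in the standard Gabor frame does not apply directly. I would first try the coarser route: pass to $S(M^t;\Phi,\Psi)$ via \eqref{Symbolinclusion}, where the Nicola–Rodino framework applies. The lost derivative gain is irrelevant, since only symbol decay in terms of $M$ is needed to get a kernel bound $|\langle a^w\pi(z)g,\pi(w)g\rangle|\lesssim M(w)^t\langle z-w\rangle^{-N}$. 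Combined with the moderateness of $v_s$, Schur's test then closes the estimate.
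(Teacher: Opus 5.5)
Your argument is correct, but it follows a genuinely different route from the paper's. The paper gives no independent proof. It points to Lemma~4.4.19 of \cite{MR4201879} (the harmonic-oscillator case) and says the argument transfers verbatim once the standard weight is replaced by $v_s=(q_1+V(x)+A(\xi))^{s/2}$ and the embedding and duality properties of $Q^s_{k,l}$ are invoked. Presumably this means an identification for $s\ge 0$ followed by duality for $s<0$.

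You instead show directly that $\mathcal{A}_{k,l}^{s/2}$ is an isomorphism of $\mathcal{M}^{2,2}_s$ onto $\mathcal{M}^{2,2}_0=L^2$, with inverse $\mathcal{A}_{k,l}^{-s/2}$. This rests on two inputs. First, the Weyl symbols of $\mathcal{A}_{k,l}^{\pm s/2}$ lie in $S(M^{\pm s/2},g)$. Second, the flat bounds $|\partial^\alpha a|\lesssim M^{t}$ (automatic from $S(M^t,g)$ since $M\ge 1$), together with polynomial moderateness of $M$, give $|\langle a^w\pi(z)g,\pi(w)g\rangle|\lesssim M(w)^t\langle z-w\rangle^{-N}$. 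Schur's test then gives boundedness $\mathcal{M}^{2,2}_r\to\mathcal{M}^{2,2}_{r-2t}$. You are right that the anisotropic gain of $g$ plays no role here.

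What your route buys:
\begin{itemize}
\item It treats every real $s$ at once, with no duality step.
\item It needs no information about the non-explicit eigenfunctions of $\mathcal{A}_{k,l}$.
\item It supplies the anharmonic input that a ``verbatim'' transfer leaves implicit: whatever characterisation of $Q_s$ is used for $-\Delta+|x|^2$, its analogue for $\mathcal{A}_{k,l}$ rests on exactly this elliptic calculus.
\end{itemize}
The paper's route is shorter on the page. Your fallback (even integers, then interpolation, then duality) is the closer analogue of it.

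Three minor fixes.
\begin{itemize}
\item In the paper, Theorem~4.2.9 of \cite{MR2668420} is used for discreteness of the spectrum, not for real powers. For $s>0$, cite Theorem~\ref{Calderontheorem}. For $s<0$, write $\mathcal{A}_{k,l}^{s/2}=\mathcal{A}_{k,l}^{-m}\mathcal{A}_{k,l}^{m+s/2}$ with an integer $m>|s|/2$ and use your parametrix-based inverse.
\item The boundedness you flag as the main obstacle is already available. Combine Theorem~\ref{quantisationtheorem} (stated for all $m\in\mathbb{R}$) with self-adjointness and the duality $(\mathcal{M}^{2,2}_{s})'=\mathcal{M}^{2,2}_{-s}$.
\item In the fallback, $\mathcal{M}^{2,2}_s$ is not itself a weighted $L^2$ space. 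It is a retract of $L^2(\mathbb{R}^{2d},v_s^2\,dz)$ via $V_g$ and $V_g^{*}$, and that retraction is what makes the complex interpolation exact.
\end{itemize}
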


As an immediate consequence of Lemma~\ref{anharmonicsobolevspace}, together with the inclusion relations of Lemma~\ref{Inclusionrelation} and Hölder's inequality, it follows that for every $0<p,q\leq \infty$ and sufficiently large $|s|$,
\begin{align*}
Q^{s}_{k,l}
\hookrightarrow
\mathcal{M}^{p,p}
\hookrightarrow
\mathcal{M}^{\infty,\infty}
\hookrightarrow
Q^{-s}_{k,l}.
\end{align*}

Throughout the sequel, $\mathcal{M}^{p,q}$ denotes the unweighted modulation space $\mathcal{M}^{p,q}_{s}$ corresponding to $s=0$. For nontrivial weights, the following refined embedding property holds.

\begin{lemma}
For sufficiently large parameters $s_{0}>s>0$, the following chain of continuous embeddings holds:
\begin{align}
\label{chain1}
Q^{s_{0}}_{k,l}
\hookrightarrow
\mathcal{M}^{p,q}_{s}
\hookrightarrow
\mathcal{M}^{\infty,\infty}_{s}
\hookrightarrow
Q^{-s_{0}}_{k,l}.
\end{align}
\end{lemma}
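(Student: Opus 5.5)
The plan is to reduce each embedding in \eqref{chain1} to Lemma~\ref{anharmonicsobolevspace}, namely $Q^{s_0}_{k,l}=\mathcal{M}^{2,2}_{s_0}$, combined with weighted versions of the Hölder argument used above for the unweighted chain. Throughout we use that the weight $v_s=(q_1+V(x)+A(\xi))^{s/2}$ satisfies $v_{s_0}=v_s\,v_{s_0-s}$. We also use that $v_{-\sigma}\in L^r(\mathbb{R}^{2d})$, and likewise in each of the variables $x$ and $\xi$ separately for the mixed norms, as soon as $\sigma$ is large. The latter holds because $q_1+V(x)+A(\xi)\gtrsim 1+|x|^{2k}+|\xi|^{2l}\gtrsim (1+|x|+|\xi|)^{2}$, using $k,l\ge1$ and strict positivity and homogeneity of $V$ and $A$. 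Consequently $v_{-\sigma}\lesssim (1+|x|+|\xi|)^{-\sigma}$, and by Fubini $\|v_{-\sigma}\|_{L^{r,t}}<\infty$ for $\sigma>2d$, say, for any $1\le r,t\le\infty$.

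First embedding. For $f\in Q^{s_0}_{k,l}$ I write $V_gf\,v_s=(V_gf\,v_{s_0})\,v_{s-s_0}$. If $p,q\le2$, I apply the mixed-norm Hölder inequality with exponents determined by $\frac1p=\frac12+\frac1{r}$ and $\frac1q=\frac12+\frac1t$, giving
\[
\|f\|_{\mathcal{M}^{p,q}_s}\le \|V_gf\,v_{s_0}\|_{L^{2,2}}\,\|v_{s-s_0}\|_{L^{r,t}}\lesssim \|f\|_{\mathcal{M}^{2,2}_{s_0}}\simeq\|f\|_{Q^{s_0}_{k,l}}.
\]
This is finite once $s_0-s$ is large. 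If $p>2$ or $q>2$, I instead use Lemma~\ref{Inclusionrelation}(2) in the form $\mathcal{M}^{2,2}_{s_0}\hookrightarrow\mathcal{M}^{p,q}_{s_0}\hookrightarrow\mathcal{M}^{p,q}_{s}$, since the exponents only increase and $s_0\ge s$. Mixed cases are handled by the same device coordinatewise, or by passing through $\mathcal{M}^{\min(p,2),\min(q,2)}_{s}$.

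Second embedding. The embedding $\mathcal{M}^{p,q}_s\hookrightarrow\mathcal{M}^{\infty,\infty}_s$ is exactly Lemma~\ref{Inclusionrelation}(2) with $p_2=q_2=\infty$ and $s_1=s_2=s$.

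Third embedding. The key point is that the weight must drop from $+s$ to $-s_0$. For $f\in\mathcal{M}^{\infty,\infty}_s$ I write $V_gf\,v_{-s_0}=(V_gf\,v_s)\,v_{-s-s_0}$ and estimate
\[
\|f\|_{\mathcal{M}^{2,2}_{-s_0}}\le \|V_gf\,v_s\|_{L^\infty}\,\|v_{-(s+s_0)}\|_{L^2(\mathbb{R}^{2d})}\lesssim\|f\|_{\mathcal{M}^{\infty,\infty}_s},
\]
which is finite as $s+s_0>d$. Lemma~\ref{anharmonicsobolevspace} with index $-s_0$ then identifies $\mathcal{M}^{2,2}_{-s_0}$ with $Q^{-s_0}_{k,l}$. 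Alternatively, this step follows by duality from the first embedding using Lemma~\ref{Inclusionrelation}(1) and the identification $(Q^{s_0}_{k,l})'=Q^{-s_0}_{k,l}$. The duality route would require $p,q<\infty$, so the direct Hölder estimate is preferable.

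Main obstacle. The delicate step is the integrability of the anisotropic weight $v_{-\sigma}$ in the mixed $L^{r,t}$ norms, which drives the first embedding. It hinges on the lower bound $q_1+V(x)+A(\xi)\gtrsim 1+|x|^{2k}+|\xi|^{2l}$. I would justify this from homogeneity and strict positivity on the unit sphere: $V(x)\ge c|x|^{2k}$ and $A(\xi)\ge c|\xi|^{2l}$. Once this bound is in hand, everything reduces to polynomial-weight Hölder estimates, and "sufficiently large" can be quantified as $s_0-s>2d$ and $s_0>s>0$.
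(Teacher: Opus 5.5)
Your proposal is correct and is essentially the paper's argument: identify $Q^{s_0}_{k,l}$ with $\mathcal{M}^{2,2}_{s_0}$ via Lemma~\ref{anharmonicsobolevspace}, then apply H\"older's inequality against $v_{s-s_0}$ when $p,q\le 2$ and the inclusion relations otherwise. You differ in two small ways: you use mixed exponents $L^{r,t}$, which explicitly covers mixed cases such as $p\le 2<q$ (the paper reduces to $\mathcal{M}^{p,p}_{s}$ and leaves these implicit); and you prove $\mathcal{M}^{\infty,\infty}_{s}\hookrightarrow Q^{-s_0}_{k,l}$ by a direct H\"older estimate instead of passing through $\mathcal{M}^{\infty,\infty}$ and the earlier unweighted chain.

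One small correction: your explicit threshold $s_0-s>2d$ is valid only when $r,t\ge 1$, i.e.\ $p,q\ge 2/3$. The paper's proof also covers $0<p,q<2/3$, and there your isotropic bound gives $\|v_{s-s_0}\|_{L^{r,t}}<\infty$ only when $s_0-s>d/r+d/t$, which is still covered by ``sufficiently large''.
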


\begin{proof}
We first treat the case $p,q\geq 2$. By the inclusion relation in Lemma~\ref{Inclusionrelation}, for parameters $s_{0}>s>0$ there exists a continuous embedding
\[
\mathcal{M}^{2,2}_{s_{0}}
\hookrightarrow
\mathcal{M}^{p,q}_{s}.
\]
Invoking Lemma~\ref{anharmonicsobolevspace}, which yields the identification
\[
\mathcal{M}^{2,2}_{s_{0}}
=
Q^{s_{0}}_{k,l},
\]
we deduce that
\begin{align}
\label{chain}
Q^{s_{0}}_{k,l}
=
\mathcal{M}^{2,2}_{s_{0}}
\hookrightarrow
\mathcal{M}^{p,q}_{s}
\hookrightarrow
\mathcal{M}^{\infty,\infty}_{s}
\hookrightarrow
\mathcal{M}^{\infty,\infty}
\hookrightarrow
Q^{-s_{0}}_{k,l}.
\end{align}
This establishes \eqref{chain1} when $p,q\geq 2$.

We next consider the case $0<p,q\leq 2$. Without loss of generality, assume $p\leq q$. Since
\[
\mathcal{M}^{p,p}_{s}
\hookrightarrow
\mathcal{M}^{p,q}_{s},
\qquad q\geq p,
\]
it suffices to prove the embedding
\[
\mathcal{M}^{2,2}_{s_{0}}
\hookrightarrow
\mathcal{M}^{p,p}_{s}.
\]

Let $f\in \mathcal{M}^{2,2}_{s_{0}}$. By the definition of modulation spaces and Hölder's inequality, we have
\begin{align*}
\|V_{g}f\, v_{s}\|_{L^{p,p}}
&=
\|V_{g}f\, v_{s_{0}}\, v_{s-s_{0}}\|_{L^{p,p}} \\
&\leq
\|V_{g}f\, v_{s_{0}}\|_{L^{2,2}}
\,
\|v_{s-s_{0}}\|_{L^{r,r}},
\end{align*}
where the exponent $r$ is determined by
\[
\frac{1}{p}
=
\frac{1}{2}
+
\frac{1}{r}.
\]
Choosing $s_{0}$ sufficiently large guarantees that
\[
v_{s-s_{0}}
\in
L^{r,r},
\]
which yields the continuous embedding
\[
\mathcal{M}^{2,2}_{s_{0}}
\hookrightarrow
\mathcal{M}^{p,p}_{s}.
\]

Combining the two cases, we conclude that the chain of embeddings in \eqref{chain1} holds for all admissible values of $p$ and $q$, thereby completing the proof.
\end{proof}

We now state two key results useful for our work in this section. The first establishes boundedness of 
$t$-quantised operators associated with symbols in the anharmonic classes introduced above. This result is a direct consequence of Theorem~3.1 in \cite{MR3636061}, and its proof follows the same line of argument as Theorem~3.2 in \cite{MR4944933}. We therefore state it without proof.

\begin{theorem}
\label{quantisationtheorem}
Let $m\in\mathbb{R}$, $a\in\Sigma^{ -m }_{k,l}$, $0<p,q\leq \infty$, and $t\in\mathbb{R}$. Then the $t$-quantised operator $a_t(x,D)$ extends to a bounded operator
\[
a_t(x,D):\mathcal{M}^{p,q}\longrightarrow \mathcal{M}^{p,q}_{m},
\]
with operator norm depending only on finitely many seminorms of $a$ in $\Sigma^{ -m }_{k,l}$.
\end{theorem}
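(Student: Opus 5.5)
The plan is to reduce the statement to the general boundedness result for $t$-quantised operators with symbols in Nicola--Rodino type classes $S(M;\Phi,\Psi)$ on weighted modulation spaces, namely Theorem~3.1 of \cite{MR3636061}, and to follow the structure of the proof of Theorem~3.2 in \cite{MR4944933}. The first step is to place the symbol in a class to which that theorem applies. By \eqref{Symbolinclusion} with $m$ replaced by $-m$, any $a\in\Sigma^{-m}_{k,l}=S\bigl((q_1+V+A)^{-m/2},g\bigr)$ lies in $S\bigl(M^{-m/2};\Phi,\Psi\bigr)$ with $\Phi=\Psi=(q_1+V+A)^{1/(2k_0)}$. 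Every inclusion here is continuous, so each seminorm of $a$ in the latter class is controlled by finitely many seminorms in $\Sigma^{-m}_{k,l}$. Note also that $M^{-m/2}=v_{-m}$, where $v_s$ is the modulation weight of Section~2.2.

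The second step is to apply Theorem~3.1 of \cite{MR3636061}. In the form used in \cite{MR4944933}, it states that for a symbol in $S(\mu;\Phi,\Psi)$ with $\mu$ temperate and $\Phi,\Psi$ sublinear temperate weights, and for any $t\in\mathbb{R}$ and $0<p,q\le\infty$, the operator $a_t(x,D)$ maps $\mathcal{M}^{p,q}_{w}$ boundedly into $\mathcal{M}^{p,q}_{w/\mu}$ for every polynomially moderate weight $w$. The operator norm depends on finitely many seminorms of $a$. I will take $w\equiv 1$ and $\mu=v_{-m}$, which gives the target weight $1/\mu=v_m$. This yields boundedness
\[
a_t(x,D):\mathcal{M}^{p,q}\longrightarrow\mathcal{M}^{p,q}_{m},
\]
which is exactly the claim.

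The main obstacle is checking the hypotheses of the cited theorem for the anisotropic weights, in particular the quasi-Banach range $p,q<1$ and the temperateness of $v_{-m}$. I would handle temperateness via the temperate property of $\Phi=q_1+V+A$ stated in Section~2.3 and the submultiplicativity \eqref{eq:submultiplicative-weight}, which give
\[
v_{-m}(X+Y)\lesssim v_{-m}(X)\,(1+|Y|)^{N}
\]
for some $N$ depending on $|m|$. For the quasi-Banach case, the cited theorem is proved by a Gabor frame/almost-diagonalisation argument. Its kernel estimates use only the symbol decay $\Phi^{-|\beta|}\Psi^{-|\alpha|}$ through integrations by parts, together with the fact that $\Phi,\Psi\ge1$, so it applies verbatim.

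The $t$-dependence is harmless. Changing the quantisation maps $S(\mu;\Phi,\Psi)$ into itself continuously, since $\Phi\Psi\gtrsim1$ gives an asymptotic expansion with nonincreasing orders. One can therefore either invoke the result for each $t$ directly, or reduce to the Weyl case $t=\tfrac12$.
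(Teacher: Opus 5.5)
Your proposal is correct and takes the same route as the paper. The paper gives no separate proof; it obtains the result as a direct consequence of Theorem~3.1 in \cite{MR3636061}, arguing as in Theorem~3.2 of \cite{MR4944933}, which is exactly your reduction: embed $\Sigma^{-m}_{k,l}$ via \eqref{Symbolinclusion} into symbols dominated by the polynomially moderate weight $M^{-m/2}=v_{-m}$, then apply that theorem with trivial weight on the domain. Your checks of the temperateness of $v_{-m}$, the quasi-Banach range and the independence of $t$ just make explicit what the paper leaves implicit. Since $\Phi,\Psi\ge 1$, all that is really used is that every derivative of $a$ is bounded by $v_{-m}$, so the precise symbol-class form you attribute to the cited theorem does not matter.
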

The following result shows that fractional powers of the generalised anharmonic oscillator admit a pseudodifferential representation with a positive elliptic symbol adapted to the anharmonic geometry. Since the argument is analogous to the proof of  \cite[Proposition~2.3]{MR4313961} and \cite[Theorem 3.3]{MR4944933}, with only minor modifications in our setting, we omit the details.

\begin{theorem}
\label{Calderontheorem}
Let $\beta>0,$ $A(\xi),V(x)$  are  two strictly positive homogeneous polynomials having growth of
order $2l$ and $2k$ respectively, and $k,l\ge 1$ be integers. Then the fractional anharmonic oscillator
\[
\mathcal{A}_{k,l}^{\beta}
:=
\bigl(A(D)+V(x)\bigr)^{\beta}
\]
is a pseudodifferential operator whose Weyl symbol $\Ad^{\beta}(x,\xi)$ is real-valued and belongs to the class $\Sigma^{2\beta}_{k,l}$. More precisely, the symbol admits the asymptotic representation
\begin{equation}
\label{assymtotic}
\mathcal{A}_{k,l}^{\beta}(x,\xi)
=
\bigl(A(\xi)+V(x)\bigr)^{\beta}
+
r(x,\xi),
\qquad
|\xi|^{l}+|x|^{k}\ge 1,
\end{equation}
where the remainder satisfies
\[
r \in \Sigma^{\,2\beta-\frac{k+l}{kl}}_{k,l}.
\]
\end{theorem}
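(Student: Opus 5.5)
The plan is to follow the route of \cite[Proposition~2.3]{MR4313961} and \cite[Theorem~3.3]{MR4944933}: first build a parametrix for the resolvent of $\mathcal{A}_{k,l}$ inside the Weyl--H\"ormander calculus, and then obtain $\mathcal{A}_{k,l}^{\beta}$ from a Balakrishnan/Cauchy-type contour integral.

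First I record the calculus facts available from Section~2. The metric $g=g^{k,l}$ is a H\"ormander metric and $M=q_1+V+A$ is a $g$-weight. The Weyl symbol of $\mathcal{A}_{k,l}$ is exactly $A(\xi)+V(x)$, up to lower-order terms. These come from the $A(D)$ part (its Weyl symbol is $A(\xi)$) and from $V(x)$ (Weyl symbol $V(x)$). Hence the symbol lies in $\Sigma^2_{k,l}$ by \cite[Lemma~5.4]{MR4489248}, and it is elliptic. Since $h_g=\lambda_g^{-1}\simeq M^{-\frac{k+l}{2kl}}$, each step in the Weyl composition formula gains a factor $h_g$. In terms of the $\Sigma$ classes this gain is exactly $\frac{k+l}{kl}$ in the order. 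This is the source of the remainder order $2\beta-\frac{k+l}{kl}$.

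\textbf{Step 1: resolvent parametrix with parameter.} For $\lambda$ in a sector avoiding $[\lambda_0,\infty)$, I set $a(X)=A(\xi)+V(x)$ and $b_0(X,\lambda)=(a(X)-\lambda)^{-1}$. I check that $b_0$ belongs to $S((M+|\lambda|)^{-1},g)$ uniformly in $\lambda$. This holds because derivatives of $a$ are controlled by $M$ times the $g$-scales and $|a-\lambda|\gtrsim M+|\lambda|$. Composition then gives
\[
(a-\lambda)\#b_0=1+r_1(\lambda),
\qquad r_1\in S\bigl(h_g(M+|\lambda|)^{-1}M,g\bigr).
\]
Iterating gives a parametrix $b\sim\sum_j b_j$ of order $-2$ with a parameter. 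Inverting $1+r$ by a Neumann series (Beals' characterization) yields $(\mathcal{A}_{k,l}-\lambda)^{-1}=b^w(\lambda)$ with uniform estimates. Invertibility of $\mathcal{A}_{k,l}$ on $L^2$ and its positive discrete spectrum, both established above, ensure the genuine resolvent exists.

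\textbf{Step 2: fractional powers.} For $\beta>0$, I write $\beta=m-\theta$ with $m\in\mathbb{N}$ and $\theta\in[0,1)$, and set $\mathcal{A}_{k,l}^{\beta}=\mathcal{A}_{k,l}^{m}\mathcal{A}_{k,l}^{-\theta}$. The negative power has the representation
\[
\mathcal{A}_{k,l}^{-\theta}=\frac{i}{2\pi}\int_\Gamma \lambda^{-\theta}(\mathcal{A}_{k,l}-\lambda)^{-1}\,d\lambda,
\]
and inserting the parametrix gives a symbol $\int_\Gamma\lambda^{-\theta}b(X,\lambda)\,d\lambda$. The leading term reproduces $a^{-\theta}$ by the scalar Cauchy formula. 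The lower terms $b_j$, which carry extra factors $h_g^{j}$, integrate to symbols in $\Sigma^{-2\theta-j\frac{k+l}{kl}}_{k,l}$; the $\lambda$-integrals converge thanks to the uniform weights $(M+|\lambda|)^{-1-\cdots}$. Composing with $a^{\#m}$ in the calculus, and using $a\#a^{-\theta}$ and so on, gives leading symbol $a^{\beta}$ plus a remainder in $\Sigma^{2\beta-\frac{k+l}{kl}}_{k,l}$. Real-valuedness follows because the operator is self-adjoint and Weyl symbols of self-adjoint operators are real. The cutoff $|\xi|^l+|x|^k\ge1$ only matters because $a^\beta$ is non-smooth at the origin; on a compact set the difference is smooth and can be absorbed into $r$.

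\textbf{Main obstacle.} The delicate point is Step 1's uniformity in $\lambda$: I must verify that $M+|\lambda|$ is a $g$-weight with constants independent of $\lambda$. I must also verify that the Neumann-series inversion preserves the parameter-dependent class, so that the contour integral produces symbol estimates. I would handle this as in \cite{MR4944933}, by treating $\lambda$ as an extra weight variable. The quasi-homogeneity of $A$ and $V$ is what makes the $\Sigma$-class derivative bounds of $a^{-\theta}$ valid away from the origin.
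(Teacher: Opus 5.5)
Your route is the standard complex-powers argument: a $\lambda$-dependent parametrix for $(\mathcal{A}_{k,l}-\lambda)^{-1}$ in the calculus of $g=g^{k,l}$, a Dunford/Balakrishnan integral for $\mathcal{A}_{k,l}^{-\theta}$, then composition with $\mathcal{A}_{k,l}^{m}$. This is also what the paper intends, since the paper gives no proof of its own and only defers to \cite[Proposition~2.3]{MR4313961} and \cite[Theorem~3.3]{MR4944933}. Your bookkeeping of the remainder order is right: one factor $h_g\simeq M^{-\frac{k+l}{2kl}}$, i.e.\ a drop of $\frac{k+l}{kl}$ in the $\Sigma$-order. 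So is your argument for real-valuedness. (Incidentally, the Weyl symbol of $A(D)+V(x)$ is exactly $A(\xi)+V(x)$, with no lower-order terms.) The uniformity you call the main obstacle is routine: the $g$-continuity and $g$-temperance constants of $M$ carry over verbatim to $M+|\lambda|$.

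The step that fails as written is the lower bound $|a(X)-\lambda|\gtrsim M(X)+|\lambda|$ in Step~1. Since $A$ and $V$ are homogeneous, $a=A+V$ vanishes at $X=0$ and takes every value in $[0,\infty)$, while $M\ge q_{1}$. So the bound is false near $X=0$ when $|\lambda|$ is small. Moreover, any $\Gamma$ that encloses $[\lambda_0,\infty)$ and avoids the cut $(-\infty,0]$ of $\lambda^{-\theta}$ must cross the real axis at some $c\in(0,\lambda_0)$, which is a value of $a$. At that point $b_0=(a-\lambda)^{-1}$ is undefined, and wherever $a(X)<c$ the scalar Cauchy formula returns $0$, not $a^{-\theta}$. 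Collapsing $\Gamma$ onto the cut, as in Balakrishnan's formula, only moves the trouble to the singularity of $(a+t)^{-1}$ at $(X,t)=(0,0)$. Your closing remark about the cutoff $|\xi|^l+|x|^k\ge1$ deals with this degeneracy only at the end, but it already breaks Step~1.

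The standard repair is to excise a compact set of phase space. Choose $\chi\in C_c^\infty(\mathbb{R}^{2d})$ with $\chi=1$ on $\{a\le C\}$, where $C$ is large compared with $c$, and start from $b_0=(1-\chi)(a-\lambda)^{-1}$. On $\operatorname{supp}(1-\chi)$ you have $a\gtrsim M$ and $|a-\lambda|\gtrsim M+|\lambda|$ uniformly on $\Gamma$. Your parameter-dependent construction then goes through, and Cauchy's formula gives $(1-\chi)a^{-\theta}$. The price is a non-decaying error $-\chi$ in $(a-\lambda)\#b_0$. Its contribution to the contour integral is $\mathcal{A}_{k,l}^{-\theta}\chi^{w}$. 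This operator maps $\mathcal{S}'$ into $\mathcal{S}$, because $\chi^{w}$ does and $\mathcal{A}_{k,l}^{-\theta}$ preserves $\mathcal{S}=\bigcap_{s}Q^{s}_{k,l}$. Hence its Weyl symbol is Schwartz and can be absorbed into $r$. With this modification the rest of your argument goes through.
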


As discussed above, for $\beta>0$ the operator $\mathcal{A}_{k,l}^{\beta}$ is a pseudodifferential operator with positive Weyl symbol belonging to the class $S(M^{\beta};\Phi,\Psi)$. Moreover, its spectrum consists of a discrete sequence of eigenvalues diverging to $+\infty$, and the associated eigenfunctions $\{\Phi_{j}\}_{j\geq 0}$ form an orthonormal basis of $L^{2}(\mathbb{R}^{d})$.

For $t\geq 0$, the semigroup generated by $\mathcal{A}_{k,l}^{\beta}$ is defined through the spectral theorem by
\[
e^{-t\mathcal{A}_{k,l}^{\beta}}f
=
\sum_{j=0}^{\infty}
e^{-t\lambda_{j}^{\beta}}
P_{j}f.
\]
By Theorem~4.5.1 in \cite{MR2668420}, the operator $e^{-t\mathcal{A}_{k,l}^{\beta}}$ is itself a pseudodifferential operator whose Weyl symbol satisfies a collection of uniform estimates. These estimates will be instrumental in establishing the boundedness and smoothing properties presented in the following theorem.

We are now ready to state the following result, which provides sharp boundedness and smoothing properties of the fractional anharmonic heat semigroup on weighted modulation spaces. The proof of the following theorem is essentially identical to that of Theorem 3.5 in \cite{dasgupta2026phase}. We therefore restrict our attention to the arguments that require modification and omit the rest of the proof.

\begin{theorem}
\label{BoundofH}
Let $\beta>0$, $0<p_{1},p_{2},q_{1},q_{2}\leq\infty$, $s_{1}>0$, and $s_{2}\in\mathbb{R}$. Define
\begin{align*}
\frac{1}{\widetilde{p}}
:=
\max\Bigl\{\frac{1}{p_{2}}-\frac{1}{p_{1}},\,0\Bigr\},
\qquad
\frac{1}{\widetilde{q}}
:=
\max\Bigl\{\frac{1}{q_{2}}-\frac{1}{q_{1}},\,0\Bigr\},
\end{align*}
and set
\begin{align}
\label{sigma}
\sigma
:=
\frac{d}{2\beta}
\left(
\frac{1}{k\,\widetilde{p}}
+
\frac{1}{l\,\widetilde{q}}
\right).
\end{align}
Then, for every $0<t\leq1$, the fractional anharmonic heat semigroup satisfies
\begin{equation}
\label{maininequality}
\bigl\|e^{-t\mathcal{A}_{k,l}^{\beta}}f\bigr\|_{\mathcal{M}^{p_{2},q_{2}}_{s_{2}}}
\le
C_{0}\,t^{-\sigma}\,
\|f\|_{\mathcal{M}^{p_{1},q_{1}}_{s_{1}}},
\end{equation}
where $C_{0}>0$ is a constant independent of both $t$ and $f$.

Moreover, in the case $s_{1}=s_{2}=s\ge0$, the estimate sharpens to
\begin{equation}
\label{secondconstant}
\bigl\|e^{-t\mathcal{A}_{k,l}^{\beta}}f\bigr\|_{\mathcal{M}^{p_{2},q_{2}}_{s}}
\le
\begin{cases}
C_{0}\,t^{-\sigma}\,\|f\|_{\mathcal{M}^{p_{1},q_{1}}_{s}}, & 0<t\le1,\\[0.3em]
C_{0}\,e^{-t\lambda_{0}^{\beta}}\,\|f\|_{\mathcal{M}^{p_{1},q_{1}}_{s}}, & t\ge1,
\end{cases}
\end{equation}
where $\lambda_{0}>0$ denotes the lowest eigenvalue of the anharmonic oscillator $\mathcal{A}_{k,l}$.
\end{theorem}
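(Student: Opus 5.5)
The plan is to write the semigroup as a pseudodifferential operator and read off the $t$-dependence from the seminorms of its symbol. By Theorem~\ref{Calderontheorem} and Theorem~4.5.1 in \cite{MR2668420}, $e^{-t\mathcal{A}_{k,l}^{\beta}}$ has a Weyl symbol $b_t(x,\xi)$. We aim to show that, for every $m\ge0$ and every pair of multi-indices, uniformly in $0<t\le1$,
\[
|\partial_x^{\gamma}\partial_\xi^{\alpha} b_t(x,\xi)|\lesssim t^{-\frac{m}{2\beta}}\,(q_{1}+V(x)+A(\xi))^{-\frac{m}{2}-\frac{|\gamma|}{2k}-\frac{|\alpha|}{2l}}.
\]
This would say that $t^{\frac{m}{2\beta}}b_t$ lies in a bounded subset of $\Sigma^{-m}_{k,l}$. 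To get it, we use the model $e^{-t(A(\xi)+V(x))^{\beta}}$ together with the elementary bound $\lambda^{m/2}e^{-t\lambda^{\beta}}\lesssim t^{-m/(2\beta)}$. The lower-order terms coming from the remainder $r\in\Sigma^{2\beta-\frac{k+l}{kl}}_{k,l}$ are absorbed by the uniform estimates of Nicola--Rodino.

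First we treat the smoothing in the weight. Applying Theorem~\ref{quantisationtheorem} with $m=s_2-s_1+\mu$ gives $\|e^{-t\mathcal{A}_{k,l}^{\beta}}f\|_{\mathcal{M}^{p,q}_{s_2}}\lesssim t^{-\mu'}\|f\|_{\mathcal{M}^{p,q}_{s_1}}$. Here we commute with $v_{s_1}$ by composing with an elliptic operator in $\Sigma^{s_1}_{k,l}$ and its parametrix, which are isomorphisms $\mathcal{M}^{p,q}_{s}\to\mathcal{M}^{p,q}_{s-s_1}$.

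Next we handle the change of exponents $(p_1,q_1)\to(p_2,q_2)$. Lemma~\ref{Inclusionrelation}(2) takes care of the cases $p_2\ge p_1$ and $q_2\ge q_1$ with no loss. When $p_2<p_1$ or $q_2<q_1$, we use weighted Hölder exactly as in the proof of \eqref{chain1}, since $v_{-\mu}\in L^{\widetilde p,\widetilde q}$. Because $v$ is anisotropic, like $|x|^{k}+|\xi|^{l}$, integrability in $x$ in $L^{\widetilde p}$ needs only weight decay of order $d/(k\widetilde p)$, and in $\xi$ of order $d/(l\widetilde q)$. So one sacrifices $\mu$ just above $\frac{d}{k\widetilde p}+\frac{d}{l\widetilde q}$, which is the cost of passing from $\mathcal{M}^{p_1,q_1}_{s+\mu}$ to $\mathcal{M}^{p_2,q_2}_{s}$. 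This costs $t^{-\mu/(2\beta)}=t^{-\sigma}$, matching \eqref{sigma}. The hypothesis $s_1>0$ supplies slack to reach exactly $\sigma$ rather than $\sigma+\varepsilon$. An alternative is to interpolate between the endpoint exponents. This sharp accounting of the exponent, avoiding an $\varepsilon$-loss, is the step I expect to be the main obstacle. I would first try splitting the Hölder step dyadically in $v$ and summing with the symbol decay.

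For $t\ge1$ and $s_1=s_2=s$, we write $e^{-t\mathcal{A}_{k,l}^{\beta}}=e^{-\frac12\mathcal{A}_{k,l}^{\beta}}\,e^{-(t-1)\mathcal{A}_{k,l}^{\beta}}\,e^{-\frac12\mathcal{A}_{k,l}^{\beta}}$. The outer factors map $\mathcal{M}^{p_1,q_1}_s\to Q^{s_0}_{k,l}$ and $Q^{-s_0}_{k,l}\to\mathcal{M}^{p_2,q_2}_s$, by the $t\le1$ estimate together with the chain \eqref{chain1} and Lemma~\ref{anharmonicsobolevspace}. The middle factor is bounded on $Q^{s_0}_{k,l}\hookrightarrow Q^{-s_0}_{k,l}$ by $e^{-(t-1)\lambda_0^{\beta}}$, via the spectral decomposition. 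Combining the three factors yields $C_0e^{-t\lambda_0^{\beta}}$.
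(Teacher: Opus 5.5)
Your argument has two gaps, and the first cannot be closed because the target inequality is false in part of its stated range.

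\emph{The weight bookkeeping.} Your chain $\mathcal{M}^{p_1,q_1}_{s_1}\to\mathcal{M}^{p_1,q_1}_{s_2+\mu}\hookrightarrow\mathcal{M}^{p_2,q_2}_{s_2}$ costs $t^{-(s_2-s_1+\mu)/(2\beta)}$, but you record only $t^{-\mu/(2\beta)}$. The hypothesis $s_1>0$ gives no slack against the missing factor $t^{-(s_2-s_1)/(2\beta)}$; only $s_1>s_2$ does. When $s_1>s_2$ your scheme does give exactly $t^{-\sigma}$: with $\mu_0:=\frac{d}{k\widetilde p}+\frac{d}{l\widetilde q}$, take $\mu=\mu_0+(s_1-s_2)$, so that $m=\mu_0$ while the H\"older step keeps strict room. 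For $s_2>s_1$ the claimed bound fails. Take $p_1=p_2=q_1=q_2=2$, so $\sigma=0$, and test on a normalised eigenfunction $\Phi_j$. Since $\mathcal{M}^{2,2}_s=Q^s_{k,l}$,
\[
\frac{\|e^{-t\mathcal{A}_{k,l}^{\beta}}\Phi_j\|_{Q^{s_2}_{k,l}}}{\|\Phi_j\|_{Q^{s_1}_{k,l}}}
=\lambda_j^{\frac{s_2-s_1}{2}}e^{-t\lambda_j^{\beta}}
=e^{-1}\,t^{-\frac{s_2-s_1}{2\beta}}
\qquad\text{at } t=\lambda_j^{-\beta}\in(0,1],
\]
which is unbounded as $j\to\infty$. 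So a correct short-time estimate must carry the extra factor $t^{-(s_2-s_1)_+/(2\beta)}$. The paper's proof has the same slip. After the substitution $x=t^{-1/(2\beta k)}\tilde x$, $\xi=t^{-1/(2\beta l)}\tilde\xi$, the numerator $v_{s_2}$ in $C(t)=\|v_{s_2}(1+t^{N}v_{2\beta N})^{-1}\|_{L^{\widetilde p,\widetilde q}}$ produces a factor $t^{-s_2/(2\beta)}$ for $s_2>0$, and this factor is dropped. Meanwhile $s_1$ is discarded via $\mathcal{M}^{p_1,q_1}_{s_1}\hookrightarrow\mathcal{M}^{p_1,q_1}$. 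The case $s_1=s_2=s$, which is all that the Strichartz and well-posedness sections use, is nevertheless true, by the argument below.

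\emph{The $\varepsilon$-loss at $s_1=s_2$.} Here your fixed-weight H\"older step needs $\mu>\mu_0$ strictly (your anisotropic threshold is correct), so it only yields $t^{-\sigma-\varepsilon}$. The step you flag as the main obstacle is exactly where the paper's key idea enters, and your proposal does not supply it. The paper adds the $N=0$ bound (uniform boundedness on $\mathcal{M}^{p_1,q_1}$) to the smoothing bound from $t^{N}b_t\in\Sigma^{-2\beta N}_{k,l}$. This gives $\|(1+t^{N}v_{2\beta N})V_g(e^{-t\mathcal{A}_{k,l}^{\beta}}f)\|_{L^{p_1,q_1}}\lesssim\|f\|_{\mathcal{M}^{p_1,q_1}}$. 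It then applies H\"older against the $t$-dependent weight $(1+t^{N}v_{2\beta N})^{-1}$ instead of a fixed $v_{-\mu}$. Since $V$ and $A$ are homogeneous of degrees $2k$ and $2l$, the substitution above makes $t^{N}(V(x)+A(\xi))^{\beta N}$ independent of $t$, and the Jacobian gives exactly
\[
\|(1+t^{N}v_{2\beta N})^{-1}\|_{L^{\widetilde p,\widetilde q}}
\le t^{-\sigma}\,\|(1+(V(\tilde x)+A(\tilde \xi))^{\beta N})^{-1}\|_{L^{\widetilde p,\widetilde q}},
\]
with a finite constant once $N$ is large. Your dyadic splitting would be a discretised form of this. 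Do it at weight $0$, then transfer to $s_1=s_2=s$ by conjugating with $\mathcal{A}_{k,l}^{s/2}$, which is invertible and commutes exactly with $e^{-t\mathcal{A}_{k,l}^{\beta}}$. A generic elliptic operator with a parametrix would leave commutator and smoothing remainders whose $t$-dependence you would still have to control.

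Your $t\ge1$ factorisation is sound. However, justify the outer factors spectrally rather than through the short-time estimate, which you would be using with $s_2>s_1$ for the first factor and with $s_1=-s_0<0$ for the second. By \eqref{chain1}, $\mathcal{M}^{p_1,q_1}_s\hookrightarrow Q^{-s_0}_{k,l}$ and $Q^{s_0}_{k,l}\hookrightarrow\mathcal{M}^{p_2,q_2}_s$. Moreover, for $t\ge1$, $\|e^{-t\mathcal{A}_{k,l}^{\beta}}\|_{Q^{-s_0}_{k,l}\to Q^{s_0}_{k,l}}=\sup_j\lambda_j^{s_0}e^{-t\lambda_j^{\beta}}\lesssim e^{-t\lambda_0^{\beta}}$.
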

\begin{proof}
By the embedding properties of modulation spaces recalled earlier, it is sufficient to prove \eqref{maininequality} with $p_{2}$ replaced by $\min\{p_{1},p_{2}\}$ and $q_{2}$ replaced by $\min\{q_{1},q_{2}\}$. Therefore, without loss of generality, we may assume throughout the proof that
\[
p_{2}\leq p_{1}
\qquad \text{and} \qquad
q_{2}\leq q_{1}.
\]

Since $\mathcal{A}_{k,l}^{\beta}$, for $\beta>0$, is a pseudodifferential operator with positive elliptic Weyl symbol belonging to the class $S(M^{\beta};\Phi,\Psi)$, its $g$-ellipticity follows immediately from the asymptotic expansion \eqref{assymtotic}. Consequently, Theorem~4.5.1 of \cite{MR2668420} applies to the heat semigroup generated by $\mathcal{A}_{k,l}^{\beta}$.

In particular, the operator $e^{-t\mathcal{A}_{k,l}^{\beta}}$ is itself a pseudodifferential operator with Weyl symbol $b_t(x,\xi)$ depending on the parameter $t$. Moreover, for every $N\geq 0$, the family $\{t^{N}b_t\}_{t\in[0,T]}$ is uniformly bounded in the symbol class $\Sigma^{-2\beta N}_{k,l}$ for each fixed $T>0$. Consequently, an application of Theorem~\ref{quantisationtheorem} yields
\begin{align*}
\|e^{-t\mathcal{A}_{k,l}^{\beta}}f\|_{\mathcal{M}^{p_{1},q_{1}}}
\lesssim
\|f\|_{\mathcal{M}^{p_{1},q_{1}}}.
\end{align*}
Since $s_{1}>0$, the continuous embedding
\[
\mathcal{M}^{p_{1},q_{1}}_{s_{1}}
\hookrightarrow
\mathcal{M}^{p_{1},q_{1}}
\]
holds. Therefore,
\begin{align}
\label{modulationinclusion1}
\|V_{g}(e^{-t\mathcal{A}_{k,l}^{\beta}}f)\|_{L^{p_{1},q_{1}}}
&\lesssim
\|f\|_{\mathcal{M}^{p_{1},q_{1}}}
\nonumber\\
&\lesssim
\|f\|_{\mathcal{M}^{p_{1},q_{1}}_{s_{1}}}.
\end{align}
Next, since $t^{N}b_t\in \Sigma^{-2\beta N}_{k,l}$ for every $N\in\mathbb{N}$, an argument identical to the one above yields
\begin{align}
\label{modulationinclusion2}
\|t^{N}v_{2\beta N}
V_{g}(e^{-t\mathcal{A}_{k,l}^{\beta}}f)\|_{L^{p_{1},q_{1}}}
\lesssim
\|f\|_{\mathcal{M}^{p_{1},q_{1}}_{s_{1}}}.
\end{align}
Combining \eqref{modulationinclusion1} and \eqref{modulationinclusion2}, we obtain
\begin{align}
\label{eqn012}
\|(1+t^{N}v_{2\beta N})
V_{g}(e^{-t\mathcal{A}_{k,l}^{\beta}}f)\|_{L^{p_{1},q_{1}}}
\lesssim
\|f\|_{\mathcal{M}^{p_{1},q_{1}}_{s_{1}}},
\end{align}
uniformly for $t\in(0,1]$.

We now apply Hölder's inequality to estimate the weighted modulation norm. Writing
\begin{align}
\label{eqn1234}
\begin{aligned}
\|v_{s_{2}}
V_{g}(e^{-t\mathcal{A}_{k,l}^{\beta}}f)\|_{L^{p_{2},q_{2}}}
&=
\|v_{s_{2}}
(1+t^{N}v_{2\beta N})^{-1}
(1+t^{N}v_{2\beta N})
V_{g}(e^{-t\mathcal{A}_{k,l}^{\beta}}f)\|_{L^{p_{2},q_{2}}}
\\
&\lesssim
\|v_{s_{2}}
(1+t^{N}v_{2\beta N})^{-1}\|_{L^{\widetilde{p},\widetilde{q}}}
\\
&\qquad \times
\|(1+t^{N}v_{2\beta N})
V_{g}(e^{-t\mathcal{A}_{k,l}^{\beta}}f)\|_{L^{p_{1},q_{1}}},
\end{aligned}
\end{align}
where the exponents are determined by
\[
\frac{1}{p_{2}}
=
\frac{1}{p_{1}}
+
\frac{1}{\widetilde{p}},
\qquad
\frac{1}{q_{2}}
=
\frac{1}{q_{1}}
+
\frac{1}{\widetilde{q}}.
\]

Substituting \eqref{eqn012} into \eqref{eqn1234}, we deduce that
\begin{align}
\label{beforeenequality}
\|e^{-t\mathcal{A}_{k,l}^{\beta}}f\|_{\mathcal{M}^{p_{2},q_{2}}_{s_{2}}}
\lesssim
\|v_{s_{2}}
(1+t^{N}v_{2\beta N})^{-1}\|_{L^{\widetilde{p},\widetilde{q}}}
\,
\|f\|_{\mathcal{M}^{p_{1},q_{1}}_{s_{1}}}.
\end{align}
Thus, it remains to estimate the weighted mixed-norm quantity
\[
C(t)
:=
\|v_{s_{2}}
(1+t^{N}v_{2\beta N})^{-1}\|_{L^{\widetilde{p},\widetilde{q}}}.
\]
By the definition of the weight function $v$, we may write
\begin{align*}
C(t)
&=
\left\|
\frac{(q_{1}+V(x)+A(\xi))^{s_{2}/2}}
{1+t^{N}(q_{1}+V(x)+A(\xi))^{\beta N}}
\right\|_{L^{\widetilde{p},\widetilde{q}}}
\\
&\leq
\left\|
\frac{(q_{1}+V(x)+A(\xi))^{s_{2}/2}}
{1+t^{N}(V(x)+A(\xi))^{\beta N}}
\right\|_{L^{\widetilde{p},\widetilde{q}}}.
\end{align*}

Since the polynomials $A(\xi)$ and $V(x)$ satisfy
\[
A(\xi)\sim |\xi|^{2l},
\qquad
V(x)\sim |x|^{2k},
\]
for sufficiently large $|\xi|$ and $|x|$, there exists $R>0$ such that, whenever $|x|+|\xi|>R$,
\begin{align*}
&\left\|
\frac{(q_{1}+V(x)+A(\xi))^{s_{2}/2}}
{1+t^{N}(V(x)+A(\xi))^{\beta N}}
\right\|_{L^{\widetilde{p},\widetilde{q}}}^{\widetilde{q}}
\\
&\qquad\lesssim
\left\|
\frac{(1+|x|^{2k}+|\xi|^{2l})^{s_{2}/2}}
{1+t^{N}(|x|^{2k}+|\xi|^{2l})^{\beta N}}
\right\|_{L^{\widetilde{p},\widetilde{q}}}^{\widetilde{q}}.
\end{align*}

On the other hand, since the function
\[
(x,\xi)
\mapsto
\frac{(q_{1}+V(x)+A(\xi))^{s_{2}/2}}
{1+t^{N}(q_{1}+V(x)+A(\xi))^{\beta N}}
\]
is continuous on $\mathbb{R}^{2d}$, it is bounded on the compact region
\[
\{(x,\xi)\in\mathbb{R}^{2d}: |x|+|\xi|\leq R\}.
\]
Hence, the contribution from the compact part is finite and may be absorbed into the implicit constant.

Denoting
\[
I^{\widetilde{q}}
:=
\int_{\mathbb{R}^{d}}
\left(
\int_{\mathbb{R}^{d}}
\bigl(1+|\widetilde{x}|^{k}+|\widetilde{\xi}|^{l}\bigr)^{-(2\beta N-s_{2})\widetilde{p}}
\,\mathrm{d}\widetilde{x}
\right)^{\frac{\widetilde{q}}{\widetilde{p}}}
\mathrm{d}\widetilde{\xi},
\]
we note that \(I<\infty\) provided \(N\) is chosen sufficiently large. With this choice, the preceding estimate yields
\begin{align*}
C(t)^{\widetilde{q}}
\nonumber&\lesssim
I^{\widetilde{q}}\,
t^{-\frac{d}{2\beta }
\left(\frac{\widetilde{q}}{k\widetilde{p}}+\frac{1}{l}\right)}, 
\end{align*}
which is same as 
\begin{align}
\label{snot}
C(t)\le C'\,
t^{-\frac{d}{2\beta}
\left(\frac{1}{k\widetilde{p}}+\frac{1}{l\widetilde{q}}\right)},
\end{align}
for some constant $C'>0$ independent of $t\in(0,1]$. Consequently,
\[
C(t)
\lesssim
t^{-\sigma},
\qquad
0<t\le1,
\]
where
\[
\sigma
=
\frac{d}{2\beta}
\left(
\frac{1}{k\widetilde{p}}+\frac{1}{l\widetilde{q}}
\right).
\]
The next part of the proof follows along the same lines as that of \cite{dasgupta2026phase}. Therefore, we omit the details and conclude that
\begin{align}
\label{so}
\|e^{-t\mathcal{A}_{k,l}^{\beta}}f\|_{\mathcal{M}^{p_{2},q_{2}}_{s_{2}}}
\lesssim
t^{-\sigma}
\|f\|_{\mathcal{M}^{p_{1},q_{1}}_{s_{1}}}.
\end{align}


We now consider the special case $s_{1}=s_{2}=s\ge0$. Then the proof can be divided into two cases, namely $t\ge1$ and $0<t\le1$.

\medskip
\noindent
\textbf{Case 1: $0<t\leq 1$.}
In this case, for $s_{1}=s_{2}=s\ge 0$, the estimate \eqref{so} holds. The proof follows the same argument as in the case $s_{1}\neq s_{2}$.

\medskip
\noindent
\textbf{Case 2: \(t \geq 1\).} The proof in this case also follows along the same lines as the argument developed by the authors in \cite{dasgupta2026phase}. Therefore, we omit the details for the sake of brevity.

\end{proof}

\section{Strichartz-type Estimates for Fractional Anharmoinc Oscillator in Weighted Modulation Spaces }
In this section, we establish one of the main results of the paper, namely the Strichartz-type estimates associated with the heat semigroup generated by the fractional generalised anharmonic operator \(\Ad^{\beta}\).

To formulate these estimates, we introduce the space \(L^{r}_{t}([0,\infty), M^{p,q}_{s})\), consisting of all functions \(\psi:[0,\infty)\to M^{p,q}_{s}\) satisfying
\begin{align*}
\left\|\, \|\psi(t)\|_{M^{p,q}_{s}}\, \right\|_{L^{r}_{t}}<\infty.
\end{align*}
For convenience, we use the shorthand notation
\[
L^{r}_{t}\M^{p,q}:=L^{r}_{t}([0,\infty),M^{p,q}_{s}).
\]

We now establish the Strichartz-type estimates associated with the fractional anharmonic heat semigroup generated by \(\Ad^{\beta}\).
\begin{theorem}
\label{Strichartz}
Let \(1\leq p_{1},q_{1},p_{2},q_{2}\leq \infty\), \(1<r_{1},r_{2}<\infty\), and \(s\geq0\). Then the fractional heat semigroup generated by the generalised anharmonic oscillator satisfies the following Strichartz-type estimates:
\begin{enumerate}
\item[(i)] For
$\sigma<\frac{1}{r_{2}},$
we have
\begin{equation}
\label{1st strichartzconclusion}
\|e^{-t\Ad^{\beta}}f\|_{L^{r_{2}}_{t}\M^{p_{2},q_{2}}_{s}}
\lesssim
\|f\|_{\M^{p_{1},q_{1}}_{s}}.
\end{equation}
\item[(ii)] For
$\sigma<\frac{1}{r_{1}'}+\frac{1}{r_{2}},$
we have
\begin{equation}
\label{2nd strkzconclsn}
\left\|
\int_{0}^{t}
e^{-(t-\tau)\Ad^{\beta}}
F(\tau,\cdot)\,
d\tau
\right\|_{L^{r_{2}}_{t}\M^{p_{2},q_{2}}_{s}}
\lesssim
\|F\|_{L^{r_{1}}_{t}\M^{p_{1},q_{1}}_{s}},
\end{equation}
where \(\sigma\) is defined in \eqref{sigma}.
\end{enumerate}
\end{theorem}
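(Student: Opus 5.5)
The plan is to combine the fixed-time estimate \eqref{secondconstant} of Theorem~\ref{BoundofH} (case $s_1=s_2=s\ge0$) with a one-dimensional Young-type inequality in time, as the introduction indicates. From \eqref{secondconstant} I will extract a single global-in-time kernel bound
\[
\|e^{-t\Ad^{\beta}}f\|_{\M^{p_2,q_2}_s}\le C_0\,h(t)\,\|f\|_{\M^{p_1,q_1}_s},\qquad h(t):=t^{-\sigma}\mathbf 1_{(0,1]}(t)+e^{-t\lambda_0^{\beta}}\mathbf 1_{(1,\infty)}(t),
\]
valid for all $t>0$. Both parts of the theorem then reduce to integrability properties of $h$ on $(0,\infty)$. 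The key observation is that $h\in L^{\rho}(0,\infty)$ exactly when $\sigma\rho<1$. The exponential tail is always integrable because $\lambda_0>0$, and this is where positivity of the lowest eigenvalue enters.

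For part (i), I take the $L^{r_2}_t$ norm of the kernel bound directly:
\[
\|e^{-t\Ad^\beta}f\|_{L^{r_2}_t\M^{p_2,q_2}_s}\le C_0\|h\|_{L^{r_2}(0,\infty)}\|f\|_{\M^{p_1,q_1}_s}.
\]
The hypothesis $\sigma<1/r_2$ gives $\sigma r_2<1$, so $\int_0^1 t^{-\sigma r_2}\,dt<\infty$. The tail $\int_1^\infty e^{-r_2 t\lambda_0^\beta}\,dt$ is finite, which proves \eqref{1st strichartzconclusion}.

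For part (ii), I use Minkowski's integral inequality inside the $\M^{p_2,q_2}_s$ norm (valid since $p_2,q_2\ge1$, so this is a Banach space) together with the kernel bound. This gives, for each $t$,
\[
\Bigl\|\int_0^t e^{-(t-\tau)\Ad^\beta}F(\tau)\,d\tau\Bigr\|_{\M^{p_2,q_2}_s}\le C_0\int_0^t h(t-\tau)\,\|F(\tau)\|_{\M^{p_1,q_1}_s}\,d\tau=C_0\,(h*\phi)(t),
\]
where $\phi(\tau)=\|F(\tau)\|_{\M^{p_1,q_1}_s}\mathbf 1_{\tau\ge0}$ and $h$ is extended by zero to negative times. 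Young's convolution inequality on $\mathbb R$ then yields $\|h*\phi\|_{L^{r_2}}\le\|h\|_{L^{\rho}}\|\phi\|_{L^{r_1}}$ with $1+\frac1{r_2}=\frac1\rho+\frac1{r_1}$, i.e.\ $\frac1\rho=\frac1{r_1'}+\frac1{r_2}$. Provided $\rho\ge1$, the condition $h\in L^\rho$ is precisely $\sigma<\frac1{r_1'}+\frac1{r_2}$, which is the stated hypothesis. This would give \eqref{2nd strkzconclsn}.

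The main obstacle is the exponent bookkeeping in part (ii). Young's inequality requires $\rho\ge1$, i.e.\ $r_2\ge r_1$, and this is not built into the stated hypotheses. When $r_2\ge r_1$, the argument above is complete. If $r_2<r_1$, the convolution is not bounded in general on the half-line without extra decay. In that case I would exploit the exponential decay of $h$: for $\sigma<1$ choose $\rho=1$, then apply Hölder's inequality in time only on finite windows $[n,n+1]$, using the summable tail $e^{-n\lambda_0^\beta}$ to control the contributions. If this fails, I would instead state the estimate under the natural restriction $r_1\le r_2$, which is the regime needed for the contraction argument. A secondary technical point is measurability in $t$ of the vector-valued integral, which I would handle by proving the estimates first for $F$ in a dense class (Schwartz-valued simple functions, using Lemma~\ref{Inclusionrelation}(1) when $p_1,q_1<\infty$) and then extending by density.
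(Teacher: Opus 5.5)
Your main argument is the paper's argument. You take the kernel $C(t)$ from \eqref{secondconstant} ($t^{-\sigma}$ on $(0,1]$, $e^{-t\lambda_0^{\beta}}$ on $[1,\infty)$), use its $L^{r_2}_t$ norm for (i), and use Minkowski's inequality followed by Young's inequality with $\frac1\rho=\frac1{r_1'}+\frac1{r_2}$ for (ii). This proves part (i), and part (ii) whenever $r_1\le r_2$.

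The obstruction you flag is real, and the paper's proof does not address it: it applies Young's inequality with $\frac1r=\frac1{r_1'}+\frac1{r_2}$ and only checks $\sigma r<1$, never $r\ge1$. However, your proposed repair for $r_2<r_1$ (unit time windows plus the exponential tail) cannot work, because part (ii) is false in that regime.

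Take $p_1=p_2$ and $q_1=q_2$, so that $\sigma=0$ and the hypothesis of (ii) holds. Let $\Phi_0\in\mathcal S(\mathbb R^d)$ be an eigenfunction for $\lambda_0$, and set $F(\tau)=\mathbf 1_{[0,N]}(\tau)\Phi_0$. Then for $0\le t\le N$,
\[
\int_0^t e^{-(t-\tau)\Ad^{\beta}}F(\tau)\,d\tau=\lambda_0^{-\beta}\bigl(1-e^{-t\lambda_0^{\beta}}\bigr)\Phi_0 .
\]
Restricting to $t\in[1,N]$, the left-hand side of (ii) is at least $\lambda_0^{-\beta}\bigl(1-e^{-\lambda_0^{\beta}}\bigr)(N-1)^{1/r_2}\|\Phi_0\|_{\M^{p_1,q_1}_s}$. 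The right-hand side equals $N^{1/r_1}\|\Phi_0\|_{\M^{p_1,q_1}_s}$. Letting $N\to\infty$ gives a contradiction whenever $r_2<r_1$.

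The decay of the kernel is irrelevant here. The failure comes from $F$ being spread over a long time interval, where H\"older's inequality in time runs the wrong way. So the correct resolution is your fallback: add the hypothesis $r_1\le r_2$, equivalently $\frac1{r_1'}+\frac1{r_2}\le1$. Under that hypothesis your proof is complete. This is also the only regime the Young step in Section~5 needs, since there the time convolution maps $L^{r/3}_t$ into $L^{r}_t$.
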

\begin{proof}
Let \(f\in \M^{p_{1},q_{1}}_{s}\). By Theorem~\ref{BoundofH}, we have
\begin{align*}
\|e^{-t\Ad^{\beta}}f\|_{L^{r_{2}}_{t}\M^{p_{2},q_{2}}_{s}}
&=
\left\|
\|e^{-t\Ad^{\beta}}f\|_{\M^{p_{2},q_{2}}_{s}}
\right\|_{L^{r_{2}}_{t}}
\\
&\lesssim
\|C(t)\|_{L^{r_{2}}_{t}}
\|f\|_{\M^{p_{1},q_{1}}_{s}}
\\
&\lesssim
\left(
\int_{0}^{1}
t^{-\sigma r_{2}}\,dt
\right)^{\frac1{r_{2}}}
\|f\|_{\M^{p_{1},q_{1}}_{s}}
\\
&\qquad
+
\left(
\int_{1}^{\infty}
e^{-t\lambda_{0}^{\beta}r_{2}}\,dt
\right)^{\frac1{r_{2}}}
\|f\|_{\M^{p_{1},q_{1}}_{s}}.
\end{align*}
When \(\sigma r_{2}<1\), both integrals are finite. Therefore,
\[
\|e^{-t\Ad^{\beta}}f\|_{L^{r_{2}}_{t}\M^{p_{2},q_{2}}_{s}}
\lesssim
\|f\|_{\M^{p_{1},q_{1}}_{s}},
\]
which establishes the homogeneous estimate \eqref{1st strichartzconclusion}.

We next turn to the proof of the inhomogeneous estimate. Applying Theorem~\ref{BoundofH} and using Minkowski's integral inequality, we obtain
\begin{align*}
\left\|
\int_{0}^{t}
e^{-(t-\tau)\Ad^{\beta}}
F(\tau,\cdot)\,d\tau
\right\|_{L^{r_{2}}_{t}\M^{p_{2},q_{2}}_{s}}
&=
\left\|
\left\|
\int_{0}^{t}
e^{-(t-\tau)\Ad^{\beta}}
F(\tau,\cdot)\,d\tau
\right\|_{\M^{p_{2},q_{2}}_{s}}
\right\|_{L^{r_{2}}_{t}}
\\
&\le
\left\|
\int_{0}^{t}
\left\|
e^{-(t-\tau)\Ad^{\beta}}
F(\tau,\cdot)
\right\|_{\M^{p_{2},q_{2}}_{s}}
\,d\tau
\right\|_{L^{r_{2}}_{t}}
\\
&\lesssim
\left\|
\int_{0}^{t}
C(t-\tau)
\|F(\tau,\cdot)\|_{\M^{p_{1},q_{1}}_{s}}
\,d\tau
\right\|_{L^{r_{2}}_{t}},
\end{align*}

Applying Young's inequality, we obtain
\begin{equation}
\label{1stenq}
\left\|
\int_{0}^{t}
e^{-(t-\tau)\Ad^{\beta}}
F(\tau,\cdot)\,d\tau
\right\|_{L^{r_{2}}_{t}\M^{p_{2},q_{2}}_{s}}
\lesssim
\|C(t)\|_{L^{r}_{t}}
\|F\|_{L^{r_{1}}_{t}\M^{p_{1},q_{1}}_{s}},
\end{equation}
where the exponents satisfy
\[
1+\frac{1}{r_{2}}
=
\frac{1}{r}
+
\frac{1}{r_{1}},
\]
or equivalently,
\begin{equation}
\label{enqr}
\frac{1}{r}
=
\frac{1}{r_{1}'}
+
\frac{1}{r_{2}}.
\end{equation}

Next, by the definition of \(C(t)\),
\[
\|C(t)\|_{L^{r}_{t}}
\lesssim
\left(
\int_{0}^{1}
\tau^{-\sigma r}\,d\tau
\right)^{\frac1r}
+
\left(
\int_{1}^{\infty}
e^{-\tau\lambda_{0}^{\beta}r}\,d\tau
\right)^{\frac1r}.
\]
Hence \(\|C\|_{L^{r}_{t}}<\infty\) whenever
\[
\sigma r<1.
\]
Combining this with \eqref{enqr}, we obtain
\begin{equation}
\label{conditiononsgma}
\sigma<
\frac{1}{r_{1}'}
+
\frac{1}{r_{2}}.
\end{equation}

Therefore, under condition \eqref{conditiononsgma}, estimate \eqref{1stenq} yields the inhomogeneous Strichartz estimate \eqref{2nd strkzconclsn}. This completes the proof.
\end{proof}

\section{Global Well-posedness of Heat equation with Hartee-type Nonlinearity}
This section is devoted to the study of the well-posedness theory for nonlinear heat equations with Hartree-type nonlinearity associated with generalised anharmonic oscillators. More precisely, we consider the following Hartree-type evolution equation involving the fractional power of the generalised anharmonic operator \(\Ad^{\beta}\), where \(\beta>0\):
\begin{align}
\label{hartee1}
\begin{cases}
\partial_{t}u(t,x)+\Ad^{\beta}u(t,x)
=
\bigl(|x|^{-\gamma}*|u(t,x)|^{2}\bigr)u(t,x),\\
u(0,x)=u_{0}(x),
\end{cases}
\end{align}
where \(\gamma>0\) and \((t,x)\in [0,\infty)\times\mathbb{R}^{d}\).

The results established below will play a crucial role in developing the well-posedness results for \eqref{hartee1} in weighted modulation spaces.
\begin{lemma}
\label{gamaplemma}
Let \(0<\gamma<d\). Suppose that
\[
1<p_{1}<p_{2}<\infty,
\qquad
\frac{1}{p_{1}}+\frac{\gamma}{d}-1=\frac{1}{p_{2}},
\]
and let \(1\leq q\leq\infty\) and \(s>0\). Then, for every \(f\in\M^{p_{1},q}_{s}\),
\[
\||\cdot|^{-\gamma}*f\|_{\M^{p_{2},q}_{s}}
\lesssim
\|f\|_{\M^{p_{1},q}_{s}}.
\]
\end{lemma}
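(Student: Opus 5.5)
The plan is to reduce the estimate to the classical Hardy--Littlewood--Sobolev inequality applied frequency by frequency, using the convolution structure of the short-time Fourier transform. Write $K(x)=|x|^{-\gamma}$ and $g^{*}(z)=\overline{g(-z)}$. For any tempered $F$ one has the standard identity
\[
V_{g}F(x,\xi)=e^{-2\pi i x\cdot\xi}\,\bigl(F*M_{\xi}g^{*}\bigr)(x),
\]
where $M_{\xi}$ denotes modulation. Taking $F=K*f$ and using associativity of convolution (justified first for $f\in\mathcal{S}(\mathbb{R}^d)$ and then by density, Lemma~\ref{Inclusionrelation}(1), or by duality when $q=\infty$), I will obtain
\[
|V_{g}(K*f)(x,\xi)|=\bigl|K*\bigl(e^{2\pi i(\cdot)\cdot\xi}V_{g}f(\cdot,\xi)\bigr)(x)\bigr|
\le \bigl(K*|V_{g}f(\cdot,\xi)|\bigr)(x).
\]
For each fixed $\xi$, the Hardy--Littlewood--Sobolev inequality with $1<p_1<p_2<\infty$ and $\frac1{p_1}+\frac{\gamma}{d}-1=\frac1{p_2}$ then gives
\[
\|V_g(K*f)(\cdot,\xi)\|_{L^{p_2}_x}\lesssim\|V_gf(\cdot,\xi)\|_{L^{p_1}_x},
\]
with a constant independent of $\xi$. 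Taking the $L^q_\xi$ norm yields the unweighted bound $\|K*f\|_{\M^{p_2,q}}\lesssim\|f\|_{\M^{p_1,q}}$.

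Next I will insert the weight $v_s(x,\xi)=(q_1+V(x)+A(\xi))^{s/2}$. Since $v_s\asymp\langle x\rangle^{ks}+\langle\xi\rangle^{ls}$, I will split it into a frequency part and a spatial part. The frequency part $\langle\xi\rangle^{ls}$ is constant on each fibre, so it passes through the fibrewise HLS estimate unchanged. This gives
\[
\|\langle\xi\rangle^{ls}V_g(K*f)\|_{L^{p_2,q}}\lesssim\|\langle\xi\rangle^{ls}V_gf\|_{L^{p_1,q}}\le\|f\|_{\M^{p_1,q}_s}.
\]

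The main obstacle is the spatial factor $\langle x\rangle^{ks}$. To move it inside the convolution, I would use moderateness in the form $\langle x\rangle^{ks}\lesssim\langle y\rangle^{ks}+|x-y|^{ks}$. The first term is handled exactly as above. The second term produces the kernel $|x-y|^{ks-\gamma}$, which is not of HLS type and is not integrable at infinity. In fact, for $f\in\mathcal{S}$ with $\int f\neq0$, one has $K*f(x)\sim c|x|^{-\gamma}$ as $|x|\to\infty$, which strongly suggests that a genuine spatial weight of order $ks>\gamma$ cannot be controlled at all.

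I would therefore first check whether the intended weight for this lemma is effectively the frequency weight $\langle\xi\rangle^{s}$, i.e.\ the usual Sobolev-type weighted modulation space. In that case the argument above closes completely. If the full phase-space weight is really intended, I would try to retain only a restricted range, namely $ks<\gamma$ together with an HLS inequality with power weights (Stein--Weiss). Failing that, I would weaken the conclusion to $\langle x\rangle$-weights of order strictly below the decay exponent of $K$. This is the step I expect to require an adjustment of the hypotheses.
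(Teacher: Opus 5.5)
Your fibrewise argument is sound for the part you prove: $|V_g(K*f)(x,\xi)|\le (K*|V_gf(\cdot,\xi)|)(x)$, followed by Hardy--Littlewood--Sobolev in $x$ uniformly in $\xi$. It is the natural content of the paper's one-line proof, which only says the lemma ``follows directly from the Hardy--Littlewood--Sobolev inequality'' and defers details to a reference. There, as is typical in the Hartree-in-modulation-spaces literature, $\mathcal{M}^{p,q}_s$ carries the frequency weight $\langle\xi\rangle^{s}$. Convolution with $K$ commutes with frequency localisation, so HLS applies piece by piece. For the unweighted case and the frequency weight, your proof is complete and coincides with the intended one.

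Your suspicion about the spatial factor is correct, and it exposes a flaw in the statement rather than a gap in your method. The paper fixes the weight $v_s=(q_1+V(x)+A(\xi))^{s/2}\asymp\langle x\rangle^{ks}+\langle\xi\rangle^{ls}$ throughout. With that weight the lemma is false for large $s$, and no HLS argument can deliver it. Your heuristic becomes a counterexample as follows:
\begin{itemize}
\item Take $f\in\mathcal{S}(\mathbb{R}^d)$ with $\int f=1$, so $f\in\mathcal{M}^{p_1,q}_s$ for every $s$, and a real Gaussian window $g$.
\item Then $K*f$ is bounded and $K*f(z)=|z|^{-\gamma}(1+o(1))$ as $|z|\to\infty$.
\item Hence $V_g(K*f)(x,\xi)=|x|^{-\gamma}e^{-2\pi i x\cdot\xi}\,\widehat{g}(\xi)$ plus an error that is $o(|x|^{-\gamma})$ uniformly in $\xi$. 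This gives $|V_g(K*f)(x,\xi)|\gtrsim|x|^{-\gamma}$ for $|x|\ge R$ and $|\xi|\le 1$.
\item Therefore $\|K*f\|_{\mathcal{M}^{p_2,q}_s}=\infty$ whenever $(ks-\gamma)p_2\ge -d$, that is, whenever $ks\ge\gamma-\frac{d}{p_2}=\frac{d}{p_1'}$.
\end{itemize}

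Since $d/p_1'<\gamma$, your tentative range $ks<\gamma$ is too generous. The correct range is $0<ks<d/p_1'$, and there your Stein--Weiss idea works exactly. With equal power weights on both sides, the HLS scaling is unchanged, and $\||x|^{a}(K*h)\|_{L^{p_2}}\lesssim\||x|^{a}h\|_{L^{p_1}}$ holds precisely for $0\le a<d/p_1'$. Combine this fibrewise with plain HLS, with $\langle x\rangle^{ks}\lesssim 1+|x|^{ks}$, and with your frequency-weight argument; this proves the lemma for $ks<d/p_1'$. For $ks\ge d/p_1'$ no proof exists, so the hypothesis must be changed: use a frequency-only weight, or restrict to $s<d/(kp_1')$. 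This also affects the paper's later applications of the lemma for large $s$, such as the case $s>d/q'$.
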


The proof follows directly from the Hardy--Littlewood--Sobolev inequality; we refer the reader to \cite{MR4134386} for details.

The following trilinear estimate constitutes a key ingredient in the contraction mapping argument used to establish the well-posedness result.
\begin{lemma}
\label{trlnrlma}
Let \(0<\gamma<d\), \(s>\frac{d}{q'}\), and let \(f,g,h\in \M^{p,q}_{s}\). Suppose that \(p_{2}\) satisfies
\[
1<p<p_{2}<\infty,
\qquad
\frac{1}{p}+\frac{\gamma}{d}-1=\frac{1}{p_{2}}.
\]
Then the following estimates hold:
\begin{equation}
\label{trlnr11}
\|(|\cdot|^{-\gamma}*(f\bar{g}))h\|_{\M^{p_{2},q}_{s}}
\lesssim
\|f\|_{\M^{p,q}_{s}}
\|g\|_{\M^{p,q}_{s}}
\|h\|_{\M^{p,q}_{s}}.
\end{equation}

Moreover,
\begin{align}
\label{trlnr2}
&\|(|\cdot|^{-\gamma}*|f|^{2})f-(|\cdot|^{-\gamma}*|g|^{2})g\|_{\M^{p_{2},q}_{s}}
\nonumber\\
&\qquad\lesssim
\Bigl(
\|f\|^{2}_{\M^{p,q}_{s}}
+
\|f\|_{\M^{p,q}_{s}}
\|g\|_{\M^{p,q}_{s}}
+
\|g\|^{2}_{\M^{p,q}_{s}}
\Bigr)
\|f-g\|_{\M^{p,q}_{s}}.
\end{align}
\end{lemma}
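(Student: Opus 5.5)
The plan is to combine the algebra property of Lemma~\ref{Inclusionrelation}(3) with the Hardy--Littlewood--Sobolev bound of Lemma~\ref{gamaplemma}. The exponents must be tracked carefully, since the product $(|\cdot|^{-\gamma}*(f\bar g))h$ mixes an $\M^{p_2,q}_s$ factor with an $\M^{p,q}_s$ factor.

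\textbf{Step 1: the product $f\bar g$.} Since the weight $v_s$ is symmetric in $\xi$, conjugation is an isometry of $\M^{p,q}_s$. Because $s>\frac{d}{q'}$, Lemma~\ref{Inclusionrelation}(3) gives
\[
\|f\bar g\|_{\M^{p,q}_s}\lesssim \|f\|_{\M^{p,q}_s}\|g\|_{\M^{p,q}_s}.
\]

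\textbf{Step 2: the convolution.} Lemma~\ref{gamaplemma} with $p_1=p$ then gives
\[
\||\cdot|^{-\gamma}*(f\bar g)\|_{\M^{p_2,q}_s}\lesssim \|f\|_{\M^{p,q}_s}\|g\|_{\M^{p,q}_s}.
\]

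\textbf{Step 3: multiplying by $h$ (main obstacle).} Lemma~\ref{Inclusionrelation}(3) applies only to two functions in the same space, whereas here $F:=|\cdot|^{-\gamma}*(f\bar g)\in\M^{p_2,q}_s$ and $h\in\M^{p,q}_s$. Since $p<p_2$, Lemma~\ref{Inclusionrelation}(2) gives $\M^{p,q}_s\hookrightarrow\M^{p_2,q}_s$, so $h\in\M^{p_2,q}_s$. The algebra property in $\M^{p_2,q}_s$, which again holds because $s>\frac{d}{q'}$ and this condition does not involve the first exponent, then yields
\[
\|Fh\|_{\M^{p_2,q}_s}\lesssim\|F\|_{\M^{p_2,q}_s}\|h\|_{\M^{p_2,q}_s}\lesssim\|F\|_{\M^{p_2,q}_s}\|h\|_{\M^{p,q}_s}.
\]
Together with Step~2 this proves \eqref{trlnr11}.

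If one also wants this step to hold for the weight $v_s=(q_1+V+A)^{s/2}$ rather than the standard polynomial weight, I would first check that this weight is comparable to a submultiplicative weight of polynomial type, using \eqref{eq:submultiplicative-weight}. I would then rerun the usual proof of the algebra property, which writes $V_{g^2}(Fh)$ as a convolution in $\xi$ of $V_gF$ and $V_gh$ and applies H\"older in $x$ ($\frac1{p_2}\le\frac1{p_2}+\frac1\infty$, using $\M^{p,q}_s\hookrightarrow\M^{\infty,q}_s$) together with weighted Young in $\xi$, which needs $v_{-s}\in L^{q'}$. I expect this weighted Young step, under the anisotropic weight, to be the delicate point. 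I would handle it by noting that $v_{-s}\lesssim(1+|\xi|)^{-s}$ in the $\xi$ variable when $l\ge1$, so $s>\frac{d}{q'}$ still suffices.

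\textbf{Step 4: the difference estimate \eqref{trlnr2}.} Set $N(f,g,h):=(|\cdot|^{-\gamma}*(f\bar g))h$, which is trilinear (conjugate-linear in $g$). Telescoping gives
\[
N(f,f,f)-N(g,g,g)=N(f,f,f-g)+N(f,f-g,g)+N(f-g,g,g).
\]
Applying \eqref{trlnr11} to each term gives the bound
\[
\bigl(\|f\|^2+\|f\|\,\|g\|+\|g\|^2\bigr)\|f-g\|,
\]
with all norms in $\M^{p,q}_s$, which is \eqref{trlnr2}.
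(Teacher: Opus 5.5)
Your proof is correct and follows the paper's argument: the algebra property in $\M^{p,q}_s$ for $f\bar g$, then Lemma~\ref{gamaplemma}, then the embedding $\M^{p,q}_s\hookrightarrow\M^{p_2,q}_s$ together with the algebra property in $\M^{p_2,q}_s$ to absorb $h$. Your three-term telescoping for \eqref{trlnr2} is only a regrouping of the paper's split into $(|\cdot|^{-\gamma}*|f|^2)(f-g)+(|\cdot|^{-\gamma}*(|f|^2-|g|^2))g$ with $|f|^2-|g|^2=f\overline{(f-g)}+(f-g)\bar g$; your extra check of the algebra property for the anisotropic weight goes beyond the paper, which simply invokes Lemma~\ref{Inclusionrelation}(3).
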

\begin{proof}
Since \(s>\frac{d}{q'}\), both \(\M^{p,q}_{s}\) and \(\M^{p_{2},q}_{s}\) posses the multiplicative algebra property. Moreover, the chosen exponents \(p\) and \(p_{2}\) satisfy the hypotheses of Lemma~\ref{gamaplemma}, and hence the lemma is applicable. Therefore,
\begin{align*}
\|(|\cdot|^{-\gamma}*(f\bar{g}))h\|_{\M^{p_{2},q}_{s}}
&\lesssim
\||\cdot|^{-\gamma}*(f\bar{g})\|_{\M^{p_{2},q}_{s}}
\|h\|_{\M^{p_{2},q}_{s}}
\\
&\lesssim
\|f\bar{g}\|_{\M^{p,q}_{s}}
\|h\|_{\M^{p_{2},q}_{s}}
\\
&\lesssim
\|f\|_{\M^{p,q}_{s}}
\|\bar{g}\|_{\M^{p,q}_{s}}
\|h\|_{\M^{p_{2},q}_{s}}.
\end{align*}
Since
\[
\M^{p,q}_{s}\hookrightarrow \M^{p_{2},q}_{s},
\]
and complex conjugation preserves the modulation space norm, the bound \eqref{trlnr11} follows as a direct consequence.
Next, to prove \eqref{trlnr2}, observe that
\begin{align*}
(|\cdot|^{-\gamma}*|f|^{2})f-(|\cdot|^{-\gamma}*|g|^{2})g
=
(|\cdot|^{-\gamma}*|f|^{2})(f-g)
+
(|\cdot|^{-\gamma}*(|f|^{2}-|g|^{2}))g.
\end{align*}
Hence,
\begin{align}
\nonumber
&\|(|\cdot|^{-\gamma}*|f|^{2})f-(|\cdot|^{-\gamma}*|g|^{2})g\|_{\mathcal{M}^{p_{2},q}_{s}} \\
\label{est0}
&\leq
\|(|\cdot|^{-\gamma}*|f|^{2})(f-g)\|_{\mathcal{M}^{p_{2},q}_{s}}
+
\|(|\cdot|^{-\gamma}*(|f|^{2}-|g|^{2}))g\|_{\mathcal{M}^{p_{2},q}_{s}}.
\end{align}

Applying \eqref{trlnr11}, we obtain
\begin{align}
\label{est1}
\|(|\cdot|^{-\gamma}*|f|^{2})(f-g)\|_{\mathcal{M}^{p_{2},q}_{s}}
\lesssim
\|f\|_{\mathcal{M}^{p,q}_{s}}^{2}
\|f-g\|_{\mathcal{M}^{p,q}_{s}},
\end{align}
and similarly,
\begin{align}
\label{est2}
\|(|\cdot|^{-\gamma}*(|f|^{2}-|g|^{2}))g\|_{\mathcal{M}^{p_{2},q}_{s}}
\lesssim
\||f|^{2}-|g|^{2}\|_{\mathcal{M}^{p,q}_{s}}
\|g\|_{\mathcal{M}^{p,q}_{s}}.
\end{align}

Now,
\begin{align}
\nonumber
\||f|^{2}-|g|^{2}\|_{\mathcal{M}^{p,q}_{s}}
&=
\|f(\Bar{f}-\Bar{g})+(f-g)\Bar{g}\|_{\mathcal{M}^{p,q}_{s}} \\
\nonumber
&\leq
\|f(\Bar{f}-\Bar{g})\|_{\mathcal{M}^{p,q}_{s}}
+
\|(f-g)\Bar{g}\|_{\mathcal{M}^{p,q}_{s}} \\
\nonumber
&\lesssim
\|f\|_{\mathcal{M}^{p,q}_{s}}
\|(\Bar{f}-\Bar{g})\|_{\mathcal{M}^{p,q}_{s}}
+
\|f-g\|_{\mathcal{M}^{p,q}_{s}}
\|\Bar{g}\|_{\mathcal{M}^{p,q}_{s}} \\
\label{est3}
&=
\|f\|_{\mathcal{M}^{p,q}_{s}}
\|f-g\|_{\mathcal{M}^{p,q}_{s}}
+
\|f-g\|_{\mathcal{M}^{p,q}_{s}}
\|g\|_{\mathcal{M}^{p,q}_{s}},
\end{align}
where the first inequality follows from the triangle inequality, the second from the algebra property of $\mathcal{M}^{p,q}_{s}$, and the final equality uses the invariance of the modulation norm under complex conjugation.

Finally, combining \eqref{est1}, \eqref{est2}, and \eqref{est3} in \eqref{est0}
yields the desired estimate \eqref{trlnr2}.
\end{proof}
Now we can prove the main result of this section.
\begin{theorem}
\label{welposedness1}
Let $0<\gamma<d$, $s>\frac{d}{q'}$, and
\[
1<p<\frac{d}{d-\gamma}.
\]
Assume that $r>0$ satisfies
\begin{align}
\label{r}
r>\frac{4\beta k}{2\beta k-d+\gamma}.
\end{align}
Then there exists $\varepsilon>0$ such that, whenever
\[
\|u_{0}\|_{\M^{p,q}_{s}}<\varepsilon,
\]
the nonlinear Hartree equation \eqref{hartee1} admits a unique global solution
\[
u\in L^{\infty}\bigl([0,\infty),\M^{p,q}_{s}\bigr)
   \cap L^{r}\bigl([0,\infty),\M^{p,q}_{s}\bigr).
\]
Moreover, the solution satisfies
\[
u\in C\bigl([0,\infty),\M^{p,q}_{s}\bigr)
   \cap L^{r}\bigl([0,\infty),\M^{p,q}_{s}\bigr).
\]
\end{theorem}
\begin{proof}
By the Duhamel principle, the equation \eqref{hartee1} can be written in the integral form
\begin{align}
\label{Duhamel}
\mathcal{J}(u)
:=e^{-t\Ad^{\beta}}u_{0}
+\int_{0}^{t}e^{-(t-\tau)\Ad^{\beta}}F(\tau,x)\,\mathrm{d}\tau,
\end{align}
where
\[
F(\tau,x)
=\bigl(|x|^{-\gamma}*|u(\tau,x)|^{2}\bigr)u(\tau,x),
\qquad
(\tau,x)\in [0,\infty)\times\mathbb{R}^{d}.
\]

We define the Banach space
\[
X
=
L^{\infty}\bigl([0,\infty),\M^{p,q}_{s}\bigr)
\cap
L^{r}\bigl([0,\infty),\M^{p,q}_{s}\bigr),
\]
equipped with the norm
\[
\|u\|_{X}
=
\|u\|_{L^{\infty}_{t}\M^{p,q}_{s}}
+
\|u\|_{L^{r}_{t}\M^{p,q}_{s}}.
\]

By Theorem \ref{Strichartz}, we obtain
\begin{align}
\label{nq1}
\|e^{-t\Ad^{\beta}}u_{0}\|_{L^{r}_{t}\M^{p,q}_{s}}
\lesssim
\|u_{0}\|_{\M^{p,q}_{s}}.
\end{align}
On the other hand, using Theorem \ref{BoundofH},
\begin{align*}
\|e^{-t\Ad^{\beta}}u_{0}\|_{L^{\infty}_{t}\M^{p,q}_{s}}
&=
\Bigl\|
\|e^{-t\Ad^{\beta}}u_{0}\|_{\M^{p,q}_{s}}
\Bigr\|_{L^{\infty}_{t}} \\
&\lesssim
\Bigl\|
C(t)\|u_{0}\|_{\M^{p,q}_{s}}
\Bigr\|_{L^{\infty}_{t}} \\
&=
\|C(t)\|_{L^{\infty}_{t}}
\|u_{0}\|_{\M^{p,q}_{s}},
\end{align*}
where
\[
C(t)=
\begin{cases}
C_{0}, & 0<t\leq 1,\\[0.3em]
C_{0}e^{-\lambda_{0}t}, & t\geq 1.
\end{cases}
\]
Since $\|C(t)\|_{L^{\infty}_{t}}<\infty$, it follows that
\begin{align}
\label{nq2}
\|e^{-t\Ad^{\beta}}u_{0}\|_{L^{\infty}_{t}\M^{p,q}_{s}}
\lesssim
\|u_{0}\|_{\M^{p,q}_{s}}.
\end{align}

Combining \eqref{nq1} and \eqref{nq2}, we arrive at
\begin{align}
\label{homo}
\|e^{-t\Ad^{\beta}}u_{0}\|_{X}
\leq
C_{1}\|u_{0}\|_{\M^{p,q}_{s}}.
\end{align}

To estimate the inhomogeneous term in \eqref{Duhamel}, we consider the following norm. By Theorem \ref{BoundofH},
\begin{align}
\label{inhm1}
\Bigl\|
\int_{0}^{t}e^{-(t-\tau)\Ad^{\beta}}F(\tau,\cdot)\,\mathrm{d}\tau
\Bigr\|_{L^{r}_{t}\M^{p,q}_{s}}
&\lesssim
\Bigl\|
\int_{0}^{t}
C(t-\tau)\,
\|F(\tau,\cdot)\|_{\M^{p_{2},q}_{s}}
\,\mathrm{d}\tau
\Bigr\|_{L^{r}_{t}}
\nonumber\\
&=
\Bigl\|
C(\tau)
*
\|F(\tau,\cdot)\|_{\M^{p_{2},q}_{s}}
\Bigr\|_{L^{r}_{t}}.
\end{align}

Here we choose $p_{2}$ satisfying
\[
1<p<p_{2}<\infty
\]
and
\begin{align}
\label{enq}
\frac{1}{p}+\frac{\gamma}{d}-1
=
\frac{1}{p_{2}}.
\end{align}
Since
\[
1<p<\frac{d}{d-\gamma},
\]
we have
\[
\frac{1}{p}+\frac{\gamma}{d}-1>0,
\]
which guarantees the existence of such a $p_{2}>1$.

In \eqref{inhm1}, the function $C(\tau)$ is given by
\[
C(\tau)=
\begin{cases}
\tau^{-\sigma}, & 0<\tau\leq 1,\\[0.3em]
e^{-\tau\lambda_{0}^{\beta}}, & \tau\geq 1,
\end{cases}
\]
where
\begin{align}
\label{sigma1}
\sigma
&=
\frac{d}{2\beta k}
\left(\frac{1}{p}-\frac{1}{p_{2}}\right) \nonumber\\
&=
\frac{d}{2\beta k}
\left(1-\frac{\gamma}{d}\right)
=
\frac{d-\gamma}{2\beta k}
>0.
\end{align}

Moreover, by Lemma \ref{trlnrlma},
\[
\|F(\tau,\cdot)\|_{\M^{p_{2},q}_{s}}
\lesssim
\|u(\tau,\cdot)\|_{\M^{p,q}_{s}}^{3}.
\]
Substituting this estimate into \eqref{inhm1}, we obtain
\[
\Bigl\|
\int_{0}^{t}
e^{-(t-\tau)\Ad^{\beta}}
F(\tau,\cdot)\,\mathrm{d}\tau
\Bigr\|_{L^{r}_{t}\M^{p,q}_{s}}
\lesssim
\Bigl\|
C(\tau)
*
\|u(\tau,\cdot)\|_{\M^{p,q}_{s}}^{3}
\Bigr\|_{L^{r}_{t}}.
\]

Next, choose $r_{2}>1$ such that
\begin{align}
\label{r2}
1+\frac{1}{r}
=
\frac{3}{r}+\frac{1}{r_{2}}.
\end{align}
Applying Young's inequality, we obtain
\begin{align*}
\left\|
\int_{0}^{t}
e^{-(t-\tau)\Ad^{\beta}}
F(\tau,\cdot)\,d\tau
\right\|_{L^{r}_{t}\M^{p,q}_{s}}
&\lesssim
\|C(t)\|_{L^{r_{2}}_{t}}
\left\|
\|u(t,\cdot)\|_{\M^{p,q}_{s}}^{3}
\right\|_{L^{r/3}_{t}}
\\
&\lesssim
\|C(t)\|_{L^{r_{2}}_{t}}
\|u\|_{L^{r}_{t}\M^{p,q}_{s}}^{3}.
\end{align*}
As in the proof of Theorem~\ref{Strichartz}, the integrability of  $C(t)$ is ensured by the condition
\[
\|C(t)\|_{L^{r_{2}}_{t}}<\infty,
\]
which holds whenever $\sigma r_{2}<1$. This is precisely guaranteed by our assumption on $r_{2}$. Indeed,
\begin{align*}
r>\frac{4\beta k}{2\beta k-d+\gamma}
&\iff
\frac{2}{r}
<
1-\frac{d}{2\beta k}
+\frac{\gamma}{2\beta k}
\\
&\iff
\frac{d}{2\beta k}
\left(1-\frac{\gamma}{d}\right)
<
1-\frac{2}{r}
=
\frac{1}{r_{2}}.
\end{align*}
Recalling the definition of $\sigma$, we have
\[
\sigma<\frac{1}{r_{2}},
\]
which immediately implies that $\sigma r_{2}<1$, and therefore
\begin{equation}
\label{inhm2}
\left\|
\int_{0}^{t}
e^{-(t-\tau)\Ad^{\beta}}
F(\tau,\cdot)\,d\tau
\right\|_{L^{r}_{t}\M^{p,q}_{s}}
\lesssim
\|u\|_{L^{r}_{t}\M^{p,q}_{s}}^{3}.
\end{equation}
We next estimate the inhomogeneous term in the space
\[
L^{\infty}\bigl([0,\infty),\M^{p,q}_{s}\bigr).
\]
Using Theorem \ref{BoundofH}, together with the same choice of exponents $p$ and $p_{2}$ as above, we obtain
\begin{align*}
\Bigl\|
\int_{0}^{t}
e^{-(t-\tau)\Ad^{\beta}}
F(\tau,\cdot)\,\mathrm{d}\tau
\Bigr\|_{\M^{p,q}_{s}}
&\leq
\int_{0}^{t}
\bigl\|
e^{-(t-\tau)\Ad^{\beta}}
F(\tau,\cdot)
\bigr\|_{\M^{p,q}_{s}}
\,\mathrm{d}\tau
\\
&\lesssim
\int_{0}^{t}
C(t-\tau)
\|F(\tau,\cdot)\|_{\M^{p_{2},q}_{s}}
\,\mathrm{d}\tau
\\
&\lesssim
\int_{0}^{t}
C(t-\tau)
\|u(\tau,\cdot)\|_{\M^{p,q}_{s}}^{3}
\,\mathrm{d}\tau
\\
&\lesssim
\|u\|_{L^{\infty}_{t}\M^{p,q}_{s}}^{3}
\int_{0}^{t}
C(t-\tau)\,\mathrm{d}\tau.
\end{align*}

It remains to estimate the integral involving $C(t)$. Observe that
\[
\int_{0}^{t}C(t-\tau)\,\mathrm{d}\tau
=
\begin{cases}
\displaystyle
\int_{0}^{t}(t-\tau)^{-\sigma}\,\mathrm{d}\tau,
& 0<t\leq 1,
\\[1em]
\displaystyle
\int_{0}^{t}
e^{-(t-\tau)\lambda_{0}^{\beta}}
\,\mathrm{d}\tau,
& t\geq 1.
\end{cases}
\]
Since $\sigma<\frac{1}{r_{2}}<1$, both integrals are uniformly bounded in $t$. 
Therefore,
\begin{align}
\label{inhm3}
\Bigl\|
\int_{0}^{t}
e^{-(t-\tau)\Ad^{\beta}}
F(\tau,\cdot)\,\mathrm{d}\tau
\Bigr\|_{L^{\infty}_{t}\M^{p,q}_{s}}
\lesssim
\|u\|_{L^{\infty}_{t}\M^{p,q}_{s}}^{3}.
\end{align}
Combining \eqref{inhm2} and \eqref{inhm3}, we deduce that
\begin{align}
\label{inhm}
\left\|
\int_{0}^{t}
e^{-(t-\tau)\Ad^{\beta}}
F(\tau,\cdot)\,d\tau
\right\|_{X}
&\lesssim
\|u\|_{L^{r}_{t}\M^{p,q}_{s}}^{3}
+
\|u\|_{L^{\infty}_{t}\M^{p,q}_{s}}^{3}
\nonumber\\
&\lesssim
\Bigl(
\|u\|_{L^{r}_{t}\M^{p,q}_{s}}
+
\|u\|_{L^{\infty}_{t}\M^{p,q}_{s}}
\Bigr)^{3}
\nonumber\\
&=
C_{2}\|u\|_{X}^{3},
\end{align}
for some constant \(C_{2}>0\).
Using \eqref{homo} together with \eqref{inhm}, we deduce that
\begin{align}
\label{neq1}
\|\mathcal{J}(u)\|_{X}
&\leq
C_{1}\|u_{0}\|_{\M^{p,q}_{s}}
+
C_{2}\|u\|_{X}^{3}
\nonumber\\
&\leq
C_{3}
\Bigl(
\|u_{0}\|_{\M^{p,q}_{s}}
+
\|u\|_{X}^{3}
\Bigr),
\end{align}
for some constant $C_{3}>0$.

For $\varepsilon>0$, define
\[
B_{\varepsilon}
=
\Bigl\{
u\in X
:\ 
\|u\|_{X}\leq \varepsilon
\Bigr\},
\]
which is the closed ball of radius $\varepsilon$ centered at the origin in $X$.

We next show that \(\mathcal{J}\) maps \(B_{\varepsilon}\) into itself for a suitable choice of \(\varepsilon>0\). Suppose that the initial datum satisfies
\[
\|u_{0}\|_{\M^{p,q}_{s}}
\leq
\frac{\varepsilon}{2C_{3}}.
\]
Then, for any \(u\in B_{\varepsilon}\), estimate \eqref{neq1} yields
\[
\|\mathcal{J}(u)\|_{X}
\leq
\frac{\varepsilon}{2}
+
C_{3}\varepsilon^{3}.
\]

Choosing \(\varepsilon>0\) sufficiently small so that
\[
C_{3}\varepsilon^{2}\leq \frac12,
\]
we obtain
\[
C_{3}\varepsilon^{3}
\leq
\frac{\varepsilon}{2}.
\]
Therefore,
\[
\|\mathcal{J}(u)\|_{X}
\leq
\frac{\varepsilon}{2}
+
\frac{\varepsilon}{2}
=
\varepsilon.
\]

It follows that \(\mathcal{J}(u)\in B_{\varepsilon}\) whenever \(u\in B_{\varepsilon}\). Hence \(\mathcal{J}\) maps \(B_{\varepsilon}\) into itself. 
{We now show that, for a sufficiently small choice of $\varepsilon>0$, the map $\mathcal{J}$ is a contraction on $B_{\varepsilon}$. The precise choice of $\varepsilon$ will be specified below.}

Let $u,v\in B_{\varepsilon}$. We shall prove that
\[
\|\mathcal{J}(u)-\mathcal{J}(v)\|_{X}
\leq
\frac{1}{2}\|u-v\|_{X}.
\]

Define
\[
F(t,x)
=
\bigl(|x|^{-\gamma}*|u(t,x)|^{2}\bigr)u(t,x),
\qquad
G(t,x)
=
\bigl(|x|^{-\gamma}*|v(t,x)|^{2}\bigr)v(t,x).
\]

Proceeding exactly as in the derivation of \eqref{inhm1}, and using the same choice of exponents $p$ and $p_{2}$, we obtain
\begin{align*}
\Bigl\|
\int_{0}^{t}
e^{-(t-\tau)\Ad^{\beta}}
\bigl(F(\tau,\cdot)-G(\tau,\cdot)\bigr)
\,\mathrm{d}\tau
\Bigr\|_{L^{r}_{t}\M^{p,q}_{s}}
\lesssim
\Bigl\|
C(\tau)
*
\|F(\tau,\cdot)-G(\tau,\cdot)\|_{\M^{p_{2},q}_{s}}
\Bigr\|_{L^{r}_{t}}.
\end{align*}

Let $r_{2}>1$ satisfy \eqref{r2}. By Young's inequality,
\begin{align*}
\Bigl\|
\int_{0}^{t}
e^{-(t-\tau)\Ad^{\beta}}
\bigl(F(\tau,\cdot)-G(\tau,\cdot)\bigr)
\,\mathrm{d}\tau
\Bigr\|_{L^{r}_{t}\M^{p,q}_{s}}
\lesssim
\|C(t)\|_{L^{r_{2}}_{t}}
\cdot
\Bigl\|
\|F(\tau,\cdot)-G(\tau,\cdot)\|_{\M^{p_{2},q}_{s}}
\Bigr\|_{L^{r/3}_{t}}.
\end{align*}

Since $\|C(t)\|_{L^{r_{2}}_{t}}<\infty$, it follows that
\begin{align}
\label{trlnr1}
\Bigl\|
\int_{0}^{t}
e^{-(t-\tau)\Ad^{\beta}}
\bigl(F(\tau,\cdot)-G(\tau,\cdot)\bigr)
\,\mathrm{d}\tau
\Bigr\|_{L^{r}_{t}\M^{p,q}_{s}}
\lesssim
\Bigl\|
\|F(\tau,\cdot)-G(\tau,\cdot)\|_{\M^{p_{2},q}_{s}}
\Bigr\|_{L^{r/3}_{t}}.
\end{align}

Next, by Lemma \ref{trlnrlma},
\begin{align*}
\|F(\tau,\cdot)-G(\tau,\cdot)\|_{\M^{p_{2},q}_{s}}
\lesssim
\Bigl(
\|u\|_{\M^{p,q}_{s}}^{2}
+
\|u\|_{\M^{p,q}_{s}}
\|v\|_{\M^{p,q}_{s}}
+
\|v\|_{\M^{p,q}_{s}}^{2}
\Bigr)
\|u-v\|_{\M^{p,q}_{s}}.
\end{align*}
Applying Hölder's inequality in time, we obtain
\begin{align*}
&
\Bigl\|
\|F(\tau,\cdot)-G(\tau,\cdot)\|_{\M^{p_{2},q}_{s}}
\Bigr\|_{L^{r/3}_{t}}
\\
&\lesssim
\Bigl\|
\Bigl(
\|u\|_{\M^{p,q}_{s}}^{2}
+
\|u\|_{\M^{p,q}_{s}}
\|v\|_{\M^{p,q}_{s}}
+
\|v\|_{\M^{p,q}_{s}}^{2}
\Bigr)
\|u-v\|_{\M^{p,q}_{s}}
\Bigr\|_{L^{r/3}_{t}}
\\
&\lesssim
\Bigl\|
\|u\|_{\M^{p,q}_{s}}^{2}
+
\|u\|_{\M^{p,q}_{s}}
\|v\|_{\M^{p,q}_{s}}
+
\|v\|_{\M^{p,q}_{s}}^{2}
\Bigr\|_{L^{r/2}_{t}}
\|u-v\|_{L^{r}_{t}\M^{p,q}_{s}}
\\
&\lesssim
\Bigl(
\|\|u\|_{\M^{p,q}_{s}}^{2}\|_{L^{r/2}_{t}}
+
\|\|u\|_{\M^{p,q}_{s}}
\|v\|_{\M^{p,q}_{s}}\|_{L^{r/2}_{t}}
+
\|\|v\|_{\M^{p,q}_{s}}^{2}\|_{L^{r/2}_{t}}
\Bigr)
\|u-v\|_{L^{r}_{t}\M^{p,q}_{s}}
\\
&\lesssim
\Bigl(
\|u\|_{L^{r}_{t}\M^{p,q}_{s}}^{2}
+
\|u\|_{L^{r}_{t}\M^{p,q}_{s}}
\|v\|_{L^{r}_{t}\M^{p,q}_{s}}
+
\|v\|_{L^{r}_{t}\M^{p,q}_{s}}^{2}
\Bigr)
\|u-v\|_{L^{r}_{t}\M^{p,q}_{s}}.
\end{align*}

Since $u,v\in B_{\varepsilon}$, estimate \eqref{trlnr1} yields
\begin{align*}
\Bigl\|
\int_{0}^{t}
e^{-(t-\tau)\Ad^{\beta}}
\bigl(F(\tau,\cdot)-G(\tau,\cdot)\bigr)
\,\mathrm{d}\tau
\Bigr\|_{L^{r}_{t}\M^{p,q}_{s}}
\leq
3C_{4}\varepsilon^{2}
\|u-v\|_{L^{r}_{t}\M^{p,q}_{s}},
\end{align*}
for some constant $C_{4}>0$.

Choosing $\varepsilon>0$ sufficiently small so that
\[
\varepsilon^{2}
<
\frac{1}{12C_{4}},
\]
we finally arrive at
\begin{align}
\label{123}
\Bigl\|
\int_{0}^{t}
e^{-(t-\tau)\Ad^{\beta}}
\bigl(F(\tau,\cdot)-G(\tau,\cdot)\bigr)
\,\mathrm{d}\tau
\Bigr\|_{L^{r}_{t}\M^{p,q}_{s}}
\leq
\frac{1}{4}
\|u-v\|_{L^{r}_{t}\M^{p,q}_{s}}.
\end{align}

Proceeding as in the proof of \eqref{inhm3}, we estimate
\begin{align}
\label{10}
\Bigl\|
\int_{0}^{t}
e^{-(t-\tau)\Ad^{\beta}}
\bigl(F(\tau,\cdot)-G(\tau,\cdot)\bigr)
\,\mathrm{d}\tau
\Bigr\|_{\M^{p,q}_{s}}
&\lesssim
\int_{0}^{t}
C(t-\tau)
\|F(\tau,\cdot)-G(\tau,\cdot)\|_{\M^{p_{2},q}_{s}}
\,\mathrm{d}\tau
\nonumber\\
&\lesssim
\|F(\tau,\cdot)-G(\tau,\cdot)\|_{L^{\infty}_{t}\M^{p_{2},q}_{s}}
\int_{0}^{t}
C(\tau)\,\mathrm{d}\tau
\nonumber\\
&\lesssim
\|F(\tau,\cdot)-G(\tau,\cdot)\|_{L^{\infty}_{t}\M^{p_{2},q}_{s}}.
\end{align}


Next, by Lemma \ref{trlnrlma},
\begin{align}
\label{12}
&
\|F(\tau,\cdot)-G(\tau,\cdot)\|_{L^{\infty}_{t}\M^{p_{2},q}_{s}}
\nonumber\\
&\lesssim
\Bigl\|
\Bigl(
\|u\|_{\M^{p,q}_{s}}^{2}
+
\|u\|_{\M^{p,q}_{s}}
\|v\|_{\M^{p,q}_{s}}
+
\|v\|_{\M^{p,q}_{s}}^{2}
\Bigr)
\|u-v\|_{\M^{p,q}_{s}}
\Bigr\|_{L^{\infty}_{t}}
\nonumber\\
&\lesssim
\Bigl(
\|u\|_{L^{\infty}_{t}\M^{p,q}_{s}}^{2}
+
\|u\|_{L^{\infty}_{t}\M^{p,q}_{s}}
\|v\|_{L^{\infty}_{t}\M^{p,q}_{s}}
+
\|v\|_{L^{\infty}_{t}\M^{p,q}_{s}}^{2}
\Bigr)
\|u-v\|_{L^{\infty}_{t}\M^{p,q}_{s}}.
\end{align}

Since $u,v\in B_{\varepsilon}$, combining \eqref{10} and \eqref{12} gives
\begin{align*}
\Bigl\|
\int_{0}^{t}
e^{-(t-\tau)\Ad^{\beta}}
\bigl(F(\tau,\cdot)-G(\tau,\cdot)\bigr)
\,\mathrm{d}\tau
\Bigr\|_{L^{\infty}_{t}\M^{p,q}_{s}}
\leq
3C_{5}\varepsilon^{2}
\|u-v\|_{L^{\infty}_{t}\M^{p,q}_{s}},
\end{align*}
for some constant $C_{5}>0$.

Choosing $\varepsilon>0$ sufficiently small so that
\[
\varepsilon^{2}
<
\frac{1}{12C_{5}},
\]
we arrive at
\begin{align}
\label{124}
\Bigl\|
\int_{0}^{t}
e^{-(t-\tau)\Ad^{\beta}}
\bigl(F(\tau,\cdot)-G(\tau,\cdot)\bigr)
\,\mathrm{d}\tau
\Bigr\|_{L^{\infty}_{t}\M^{p,q}_{s}}
\leq
\frac{1}{4}
\|u-v\|_{L^{\infty}_{t}\M^{p,q}_{s}}.
\end{align}

Combining \eqref{123} and \eqref{124}, we deduce that
\[
\Bigl\|
\int_{0}^{t}
e^{-(t-\tau)\Ad^{\beta}}
\bigl(F(\tau,\cdot)-G(\tau,\cdot)\bigr)
\,\mathrm{d}\tau
\Bigr\|_{X}
\leq
\frac{1}{2}
\|u-v\|_{X}.
\]
Hence, the mapping $\mathcal{J}$ is a contraction on $B_{\varepsilon}$.

Therefore, by Banach's contraction mapping theorem, the operator $\mathcal{J}$ admits a unique fixed point in $B_{\varepsilon}$. Consequently, the nonlinear Hartree equation \eqref{hartee1} possesses a unique global solution in
\[
L^{\infty}\bigl([0,\infty),\M^{p,q}_{s}\bigr)
\cap
L^{r}\bigl([0,\infty),\M^{p,q}_{s}\bigr).
\]

To prove that the unique solution belongs to
\[
C\bigl([0,\infty),\M^{p,q}_{s}\bigr)
\cap
L^{r}\bigl([0,\infty),\M^{p,q}_{s}\bigr),
\]
we first assume that the semigroup $e^{-t\Ad^{\beta}}$ is strongly
continuous on $\M^{p,q}_{s}$. Under this assumption, the fixed point
argument developed above can be repeated verbatim with
\[
C\bigl([0,\infty),\M^{p,q}_{s}\bigr)
\]
in place of
\[
L^{\infty}\bigl([0,\infty),\M^{p,q}_{s}\bigr),
\]
thereby yielding a solution which is continuous in time.

It therefore remains to justify the strong continuity of the semigroup
$e^{-t\Ad^{\beta}}$ on $\M^{p,q}_{s}$. In view of estimate \eqref{nq2},
it suffices to show that, for every $f$ in a dense subspace of
$\M^{p,q}_{s}$, the mapping
\[
t\longmapsto e^{-t\Ad^{\beta}}f
\]
is continuous with values in $\M^{p,q}_{s}$.

Since $\mathcal{S}(\mathbb{R}^{d})$ is dense in $\M^{p,q}_{s}$, let
$f\in\mathcal{S}(\mathbb{R}^{d})$. It is well known that the semigroup
$e^{-t\Ad^{\beta}}$ is strongly continuous on $L^{2}(\mathbb{R}^{d})$
and that $\Ad^{\gamma}$ commutes with $e^{-t\Ad^{\beta}}$ for every
$\gamma\in\mathbb{N}$. Hence, for every $\gamma\in\mathbb{N}$, the map
\[
t\longmapsto e^{-t\Ad^{\beta}}\Ad^{\gamma}f
=
\Ad^{\gamma}e^{-t\Ad^{\beta}}f
\]
is continuous with values in $L^{2}(\mathbb{R}^{d})$.

By a standard abstract result (see \cite[p.~194]{MR2668420}), the family
of seminorms
\[
p_{\gamma}(f)
:=
\|\Ad^{\gamma}f\|_{L^{2}(\mathbb{R}^{d})},
\qquad \gamma\in\mathbb{N},
\]
defines an equivalent system of seminorms on
$\mathcal{S}(\mathbb{R}^{d})$. Therefore, for any fixed
$t_{0}\in[0,\infty)$, the convergence
\[
e^{-t\Ad^{\beta}}f
\longrightarrow
e^{-t_{0}\Ad^{\beta}}f
\qquad
\text{in }\mathcal{S}(\mathbb{R}^{d})
\quad \text{as } t\to t_{0},
\]
is equivalent to
\[
\|\Ad^{\gamma}(e^{-t\Ad^{\beta}}f)
-
\Ad^{\gamma}(e^{-t_{0}\Ad^{\beta}}f)\|_{L^{2}(\mathbb{R}^{d})}
\longrightarrow 0
\qquad \text{as } t\to t_{0},
\]
for every $\gamma\in\mathbb{N}$. Since $\Ad^{\gamma}$ commutes with
$e^{-t\Ad^{\beta}}$, this is equivalent to
\[
\|e^{-t\Ad^{\beta}}\Ad^{\gamma}f
-
e^{-t_{0}\Ad^{\beta}}\Ad^{\gamma}f\|_{L^{2}(\mathbb{R}^{d})}
\longrightarrow 0
\qquad \text{as } t\to t_{0}.
\]

Thus, the mapping
\[
t\longmapsto e^{-t\Ad^{\beta}}f
\]
is continuous with values in $\mathcal{S}(\mathbb{R}^{d})$. Since the
embedding
\[
\mathcal{S}(\mathbb{R}^{d})
\hookrightarrow
\M^{p,q}_{s}
\]
is continuous, it follows that the same map is continuous as an
$\M^{p,q}_{s}$-valued function.

Consequently, for $p,q,r$ as in the hypothesis and
$s>\frac{d}{q'}$, the unique solution $u$ to \eqref{hartee1} satisfies
\[
u
\in
C\bigl([0,\infty),\M^{p,q}_{s}\bigr)
\cap
L^{r}\bigl([0,\infty),\M^{p,q}_{s}\bigr).
\]\end{proof}
In the previous theorem, the admissible ranges of the exponents
$p,q,r$ and the regularity index $s$ were constrained by several
technical assumptions. Our next goal is to relax these conditions and
establish global well-posedness for initial data in $\M^{p,q}_{s}$
for a broader range of parameters $p$ and $s$.
A key observation is that the condition
\[
s>\frac{d}{q'}
\]
ensures that the modulation space $\M^{p,q}_{s}$ is a multiplicative
algebra. This property played a fundamental role in establishing the trilinear estimate in Lemma~\ref{trlnrlma}, which in turn imposed the restrictions on the admissible ranges of the parameters \(p\) and \(q\) appearing in Theorem~\ref{welposedness1}.
To overcome these limitations, we shall no longer rely on the algebra
property of modulation spaces. Instead, we use the following multilinear
estimate, which allows greater flexibility in the choice of exponents.
We refer to
\cite{feichtinger1983modulation, MR2204673, MR2506839}
for its proof.

\begin{proposition}
\label{algebraproposition}
Let $m\in\mathbb{N}$ with $m\geq 1$, and assume that
\[
\sum_{i=1}^{m}\frac{1}{p_{i}}
=
\frac{1}{p_{0}},
\qquad
\sum_{i=1}^{m}\frac{1}{q_{i}}
=
m-1+\frac{1}{q_{0}},
\]
where
\[
0<p_{i}\leq\infty,
\qquad
1\leq q_{i}\leq\infty,
\qquad
1\leq i\leq m.
\]
Then there exists a constant $C>0$ such that
\[
\Bigl\|
\prod_{i=1}^{m}f_{i}
\Bigr\|_{\M^{p_{0},q_{0}}_{s}}
\leq
C
\prod_{i=1}^{m}
\|f_{i}\|_{\M^{p_{i},q_{i}}_{s}}.
\]
\end{proposition}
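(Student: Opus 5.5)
The plan is to reduce to the bilinear case $m=2$ and then induct on $m$. The case $m=1$ is trivial, since the hypotheses force $p_0=p_1$ and $q_0=q_1$. For $m\ge 3$, we group $\prod_{i=1}^{m}f_i=\bigl(\prod_{i=1}^{m-1}f_i\bigr)f_m$. We then choose intermediate exponents $\tilde p,\tilde q$ with $\frac1{\tilde p}=\sum_{i<m}\frac1{p_i}$ and $\frac1{\tilde q}=\sum_{i<m}\frac1{q_i}-(m-2)$. The second identity gives $\frac1{\tilde q}+\frac1{q_m}=1+\frac1{q_0}$, so the bilinear estimate with $(\tilde p,\tilde q)$ and $(p_m,q_m)$ followed by the induction hypothesis closes the argument. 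One has to check that $1\le \tilde q\le\infty$. This follows from $\frac1{\tilde q}=\frac1{q_0}+1-\frac1{q_m}\in[0,1]$, provided the constraint $q_0\ge1$ is implicit (otherwise we reorder factors).

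For the bilinear estimate I will work with the frequency-uniform decomposition characterisation of $\M^{p,q}_s$, or equivalently with a product window. The latter uses the identity
\[
V_{g_1g_2}(f_1f_2)(x,\xi)=\int_{\mathbb{R}^d}V_{g_1}f_1(x,\eta)\,V_{g_2}f_2(x,\xi-\eta)\,d\eta ,
\]
which holds by writing $\widehat{f_1\overline{T_xg_1}}*\widehat{f_2\overline{T_xg_2}}$ and is valid for Schwartz windows with $g=g_1g_2\neq0$. Taking the $L^{p_0}_x$ norm and applying Minkowski and then Hölder in $x$ with $\frac1{p_0}=\frac1{p_1}+\frac1{p_2}$ gives
\[
\|V_g(f_1f_2)(\cdot,\xi)\|_{L^{p_0}}\le \int \|V_{g_1}f_1(\cdot,\eta)\|_{L^{p_1}}\|V_{g_2}f_2(\cdot,\xi-\eta)\|_{L^{p_2}}\,d\eta .
\]
This is a convolution in the frequency variable, so Young's inequality with $\frac1{q_1}+\frac1{q_2}=1+\frac1{q_0}$ gives the unweighted estimate. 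When $p_0<1$, Minkowski's inequality fails. In that regime I would instead use the discrete (uniform decomposition) version, where the integral over $\eta$ is replaced by a sum over $k\in\mathbb{Z}^d$ with finite overlap, and use the $p$-triangle inequality. This is the standard route of \cite{MR2506839}.

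The main obstacle is the weight $v_s=(q_1+V(x)+A(\xi))^{s/2}$, which depends on both $x$ and $\xi$. We need $v_s(x,\xi)\lesssim v_s(x,\eta)\,v_s(x,\xi-\eta)$ for $s\ge0$, which should follow from the submultiplicativity \eqref{eq:submultiplicative-weight}. Concretely, $A(\xi)\lesssim A(\eta)+A(\xi-\eta)$ because $A$ is a positive homogeneous polynomial of degree $2l$, so $A(\xi)\simeq|\xi|^{2l}$. Also $q_1+V(x)\le(q_1+V(x))^2$ once $q_1\ge1$, and this lets both factors absorb the $V(x)$ term. The inequality then holds with a constant, and inserting it before the Hölder/Young steps yields the weighted bound with all indices equal to $s$. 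For $s<0$ I would not attempt the estimate, since the statement is only used with $s\ge0$ in the sequel. Finally, independence of the window, proved in the preliminaries, lets us replace $g_1g_2$ by the fixed window up to equivalent norms.
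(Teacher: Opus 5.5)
The paper gives no proof here; it only cites \cite{feichtinger1983modulation, MR2204673, MR2506839}. Your bilinear argument is the standard proof behind those references: the identity $V_{g_1g_2}(f_1f_2)(x,\cdot)=V_{g_1}f_1(x,\cdot)*V_{g_2}f_2(x,\cdot)$, the bound $v_s(x,\xi)\lesssim v_s(x,\eta)\,v_s(x,\xi-\eta)$ for $s\ge0$, Minkowski and H\"older in $x$, Young in $\xi$, window independence, and then induction on $m$. It is correct when $p_0\ge1$ (equivalently all $p_i\ge1$) and $s\ge0$. Excluding $s<0$ is necessary, not just convenient. Take $f_{1,2}=e^{\pm2\pi i N x_1}\phi$ with $\phi\in\mathcal{S}$ fixed. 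Their norms are $\sim N^{ls}\to0$, while $f_1f_2=\phi^2$ does not depend on $N$.

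The genuine gap is the quasi-Banach range $p_0<1$. The hypothesis $0<p_i\le\infty$ allows it; for instance $p_1=p_2=1$ forces $p_0=\frac12$. Your proposed repair does not produce the stated exponents. Set $a_k=\|\Box_kf_1\|_{L^{p_1}}$ and $b_k=\|\Box_kf_2\|_{L^{p_2}}$. After the $p_0$-triangle inequality, the sum over $k_1+k_2\approx k$ becomes an $\ell^{p_0}$-sum, so you are left with $\bigl\|(a^{p_0}*b^{p_0})^{1/p_0}\bigr\|_{\ell^{q_0}}$. Young's inequality then needs $\frac1{q_1}+\frac1{q_2}\ge\frac1{p_0}+\frac1{q_0}$, which is strictly more than the hypothesis $1+\frac1{q_0}$.

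This loss is genuine: the proposition is false in that regime, already for $s=0$. Take $m=2$, $p_1=p_2=1$, $q_1=q_2=2$, so $p_0=\frac12$ and $q_0=\infty$. Let $\psi\in\mathcal{S}(\mathbb{R}^d)\setminus\{0\}$ be real, and set
\[
f_1(x)=\sum_{k=1}^{N}e^{2\pi i k x_1}\psi(x-y_k),\qquad f_2(x)=\sum_{k=1}^{N}e^{-2\pi i k x_1}\psi(x-y_k),
\]
with the centres $y_k$ pairwise very far apart. Summing the decay of $V_g\psi$ gives $\|f_j\|_{\mathcal{M}^{1,2}}\lesssim N^{1/2}$, uniformly in the $y_k$. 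On the other hand, $f_1f_2=\sum_k\psi^2(\cdot-y_k)+E$, where $E$ can be made arbitrarily small in $\mathcal{M}^{1/2,\infty}$ (translation and modulation invariance, plus the $\frac12$-triangle inequality). Evaluating the STFT at $\xi=0$ with a positive Gaussian window gives $\|f_1f_2\|_{\mathcal{M}^{1/2,\infty}}\gtrsim N^{2}$. Hence $N^2\lesssim N$, a contradiction.

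Your proof should therefore be stated only for $1\le p_i\le\infty$ and $s\ge0$. The excluded case matters for the paper: the exponent $p_3=p/2$ in Lemma~\ref{Trlnerlma} can be below $1$.
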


Using Proposition \ref{algebraproposition}, we establish the following
trilinear estimates.

\begin{lemma}
\label{Trlnerlma}
Let $0<\gamma<d$, $s\geq 0$, and let
\[
1\leq p,q,p_{0},q_{0}\leq \infty
\]
satisfy
\[
\frac{1}{p_{0}}
=
\frac{3}{p}
+\frac{\gamma}{d}
-1,
\qquad
\frac{1}{q_{0}}
=
3\Bigl(\frac{1}{q}-\frac{2}{3}\Bigr).
\]
Then, for all $f,g,h\in \M^{p,q}_{s}$, the Hartree-type trilinear operator obeys
\begin{align}
\label{Trlnr1}
\bigl\|
\bigl(|\cdot|^{-\gamma}*(f\overline{g})\bigr)h
\bigr\|_{\M^{p_{0},q_{0}}_{s}}
\lesssim
\|f\|_{\M^{p,q}_{s}}
\|g\|_{\M^{p,q}_{s}}
\|h\|_{\M^{p,q}_{s}}.
\end{align}
Moreover, the nonlinear mapping
\[
f \mapsto \bigl(|\cdot|^{-\gamma}*|f|^{2}\bigr)f
\]
is locally Lipschitz from $\M^{p,q}_{s}$ into $\M^{p_{0},q_{0}}_{s}$, in the sense that
\begin{align}
\label{Trlnr2}
&
\bigl\|
\bigl(|\cdot|^{-\gamma}*|f|^{2}\bigr)f
-
\bigl(|\cdot|^{-\gamma}*|g|^{2}\bigr)g
\bigr\|_{\M^{p_{0},q_{0}}_{s}}
\nonumber\\
&\lesssim
\Bigl(
\|f\|_{\M^{p,q}_{s}}^{2}
+
\|f\|_{\M^{p,q}_{s}}
\|g\|_{\M^{p,q}_{s}}
+
\|g\|_{\M^{p,q}_{s}}^{2}
\Bigr)
\|f-g\|_{\M^{p,q}_{s}}.
\end{align}
\end{lemma}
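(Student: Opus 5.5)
We will prove \eqref{Trlnr1} by applying Proposition~\ref{algebraproposition} with $m=2$ to the product of $\phi:=|\cdot|^{-\gamma}*(f\overline g)$ and $h$. The convolution factor is controlled through Lemma~\ref{gamaplemma}, and the product $f\overline g$ again through Proposition~\ref{algebraproposition}.

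\textbf{Step 1: the product $f\overline g$.} Apply Proposition~\ref{algebraproposition} with $m=2$ and exponents $(p,q),(p,q)$. This gives
\[
\|f\overline g\|_{\M^{p_1,q_1}_s}\lesssim \|f\|_{\M^{p,q}_s}\|g\|_{\M^{p,q}_s},
\qquad
\frac1{p_1}=\frac2p,\qquad \frac1{q_1}=\frac2q-1 .
\]
Complex conjugation preserves the $\M^{p,q}_s$ norm.

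\textbf{Step 2: the convolution.} Apply Lemma~\ref{gamaplemma} (Hardy--Littlewood--Sobolev transferred to modulation spaces) to $f\overline g\in\M^{p_1,q_1}_s$. This yields $\phi\in\M^{p_2,q_1}_s$ with
\[
\frac1{p_2}=\frac1{p_1}+\frac\gamma d-1=\frac2p+\frac\gamma d-1 .
\]
Here we need $1<p_1<p_2<\infty$. These are implicit admissibility conditions on $(p,\gamma,d)$, which I would record as standing assumptions. If $s=0$ is delicate in Lemma~\ref{gamaplemma}, I would use the fact that convolution with $|\cdot|^{-\gamma}$ is a Fourier multiplier $c|\xi|^{\gamma-d}$ acting on the STFT, so the weight $v_s$ passes through.

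\textbf{Step 3: the final product.} Apply Proposition~\ref{algebraproposition} once more, with $m=2$, to $\phi\cdot h$ with exponents $(p_2,q_1)$ and $(p,q)$. The output exponents are
\[
\frac1{p_0}=\frac1{p_2}+\frac1p=\frac3p+\frac\gamma d-1,
\qquad
\frac1{q_0}=\frac1{q_1}+\frac1q-1=\frac3q-2=3\Bigl(\frac1q-\frac23\Bigr).
\]
These are exactly the exponents in the statement, so chaining Steps 1--3 gives \eqref{Trlnr1}.

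\textbf{Main obstacle.} The hardest point is checking that every intermediate exponent is legitimate. Specifically, we need $q_1\ge1$ and $q_0\ge1$ (which force $q\le 3/2$ essentially, otherwise $q_0$ must be read as $\infty$ via embeddings), and the HLS range in Step 2. Where the formal exponent leaves $[1,\infty]$, I would first try enlarging it using the inclusions in Lemma~\ref{Inclusionrelation}(2).

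\textbf{Lipschitz bound \eqref{Trlnr2}.} Use the same decomposition as in Lemma~\ref{trlnrlma}:
\[
(|\cdot|^{-\gamma}*|f|^2)f-(|\cdot|^{-\gamma}*|g|^2)g=(|\cdot|^{-\gamma}*|f|^2)(f-g)+\bigl(|\cdot|^{-\gamma}*(f(\overline f-\overline g)+(f-g)\overline g)\bigr)g .
\]
Each of the three resulting terms is trilinear of the form in \eqref{Trlnr1}, so applying \eqref{Trlnr1} termwise gives the bound
\[
\bigl(\|f\|^2+\|f\|\|g\|+\|g\|^2\bigr)\|f-g\|,
\]
with all norms in $\M^{p,q}_s$. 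This step requires no algebra property, only trilinearity.
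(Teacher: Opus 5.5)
Your proof is correct and is essentially the paper's: the same three steps (Proposition~\ref{algebraproposition} for $f\overline{g}$, Lemma~\ref{gamaplemma} for the convolution, Proposition~\ref{algebraproposition} again for the product with $h$) with the same intermediate exponents (your $p_1,p_2$ are the paper's $p_3,p_1$). The Lipschitz bound uses the same splitting, which you expand into three trilinear terms where the paper instead bounds $\||f|^2\|_{\M^{p_3,q_1}_s}$ and $\||f|^2-|g|^2\|_{\M^{p_3,q_1}_s}$ directly.

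The side conditions you flag are used silently in the paper as well: $2<p<\frac{2d}{d-\gamma}$, needed for Lemma~\ref{gamaplemma}, and the case $s=0$, which that lemma (stated for $s>0$) does not cover. For $s=0$, the clean justification is the pointwise bound $|V_g(|\cdot|^{-\gamma}*F)(x,\xi)|\le(|\cdot|^{-\gamma}*|V_gF(\cdot,\xi)|)(x)$ followed by Hardy--Littlewood--Sobolev in $x$, rather than a multiplier argument.
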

\begin{proof}
Let $p_{1}$ and $q_{1}$ be defined by
\[
\frac{1}{p_{1}}
=
\frac{2}{p}
+\frac{\gamma}{d}
-1,
\qquad
\frac{1}{q_{1}}
=
\frac{2}{q}
-1.
\]
Then
\begin{align}
\label{T101}
\frac{1}{p_{0}}
=
\frac{1}{p}
+
\frac{1}{p_{1}},
\qquad
1+\frac{1}{q_{0}}
=
\frac{1}{q}
+
\frac{1}{q_{1}}.
\end{align}

Hence, by Proposition \ref{algebraproposition},
\begin{align*}
\bigl\|
\bigl(|\cdot|^{-\gamma}*(f\overline{g})\bigr)h
\bigr\|_{\M^{p_{0},q_{0}}_{s}}
\lesssim
\bigl\|
|\cdot|^{-\gamma}*(f\overline{g})
\bigr\|_{\M^{p_{1},q_{1}}_{s}}
\|h\|_{\M^{p,q}_{s}}.
\end{align*}

Next, define $p_{3}$ by
\[
\frac{1}{p_{3}}
=
\frac{2}{p}.
\]
Then
\[
\frac{1}{p_{1}}
=
\frac{1}{p_{3}}
+
\frac{\gamma}{d}
-1.
\]
Therefore, by Lemma \ref{gamaplemma},
\begin{align*}
\bigl\|
|\cdot|^{-\gamma}*(f\overline{g})
\bigr\|_{\M^{p_{1},q_{1}}_{s}}
\lesssim
\|f\overline{g}\|_{\M^{p_{3},q_{1}}_{s}}.
\end{align*}

Observe that the exponents $p_{3}$ and $q_{1}$ satisfy
\[
\frac{1}{p_{3}}
=
\frac{1}{p}
+
\frac{1}{p},
\qquad
1+\frac{1}{q_{1}}
=
\frac{1}{q}
+
\frac{1}{q}.
\]
Applying Proposition \ref{algebraproposition} once again, we obtain
\begin{align*}
\|f\overline{g}\|_{\M^{p_{3},q_{1}}_{s}}
\lesssim
\|f\|_{\M^{p,q}_{s}}
\|g\|_{\M^{p,q}_{s}}.
\end{align*}
Here we used the fact that the modulation norm is invariant under
complex conjugation.

Combining the above estimates yields
\[
\bigl\|
\bigl(|\cdot|^{-\gamma}*(f\overline{g})\bigr)h
\bigr\|_{\M^{p_{0},q_{0}}_{s}}
\lesssim
\|f\|_{\M^{p,q}_{s}}
\|g\|_{\M^{p,q}_{s}}
\|h\|_{\M^{p,q}_{s}},
\]
which proves \eqref{Trlnr1}.
For the same choice of exponents $p_{1},q_{1}$, and $p_{3}$ as above, we write
\begin{align}
\label{1}
\bigl\|
\bigl(|\cdot|^{-\gamma}*|f|^{2}\bigr)f
-
\bigl(|\cdot|^{-\gamma}*|g|^{2}\bigr)g
\bigr\|_{\M^{p_{0},q_{0}}_{s}}
&\lesssim
\bigl\|
\bigl(|\cdot|^{-\gamma}*|f|^{2}\bigr)(f-g)
\bigr\|_{\M^{p_{0},q_{0}}_{s}}
\nonumber\\
&\quad+
\bigl\|
\bigl(|\cdot|^{-\gamma}*(|f|^{2}-|g|^{2})\bigr)g
\bigr\|_{\M^{p_{0},q_{0}}_{s}}
\nonumber\\
&\lesssim
\bigl\|
|\cdot|^{-\gamma}*|f|^{2}
\bigr\|_{\M^{p_{1},q_{1}}_{s}}
\|f-g\|_{\M^{p,q}_{s}}
\nonumber\\
&\quad+
\bigl\|
|\cdot|^{-\gamma}*(|f|^{2}-|g|^{2})
\bigr\|_{\M^{p_{1},q_{1}}_{s}}
\|g\|_{\M^{p,q}_{s}}
\nonumber\\
&\lesssim
\||f|^{2}\|_{\M^{p_{3},q_{1}}_{s}}
\|f-g\|_{\M^{p,q}_{s}}
\nonumber\\
&\quad+
\||f|^{2}-|g|^{2}\|_{\M^{p_{3},q_{1}}_{s}}
\|g\|_{\M^{p,q}_{s}}.
\end{align}
where in the last step we used Lemma \ref{gamaplemma}.

Since the exponents $p_{3}$ and $q_{1}$ satisfy the assumptions of
Proposition \ref{algebraproposition}, we obtain
\begin{equation}
\label{2}
\||f|^{2}\|_{\M^{p_{3},q_{1}}_{s}}
\lesssim
\|f\|_{\M^{p,q}_{s}}^{2}.
\end{equation}

Furthermore, by using a same line argument we can prove that 
\begin{align}
\label{3}
\||f|^{2}-|g|^{2}\|_{\M^{p_{3},q_{1}}_{s}}
&\lesssim
\|f-g\|_{\M^{p,q}_{s}}
\bigl(
\|f\|_{\M^{p,q}_{s}}
+
\|g\|_{\M^{p,q}_{s}}
\bigr).
\end{align}
In the last inequality we used the invariance of the modulation norm
under complex conjugation.
Finally, substituting \eqref{2} and \eqref{3} into \eqref{1}, we arrive at
\eqref{Trlnr2}.
\end{proof}
\begin{remark}
The existence of exponents $p_{0},q_{0}\geq 1$ satisfying the
assumptions of Lemma \ref{Trlnerlma} requires that
\[
0<
\frac{3}{p}
+\frac{\gamma}{d}
-1
<1,
\qquad
0<
3\Bigl(\frac{1}{q}-\frac{2}{3}\Bigr)
<1.
\]
Equivalently, the parameters $p$ and $q$ must satisfy
\begin{align}
\frac{3d}{2d-\gamma}
<
p
<
\frac{3d}{d-\gamma},
\qquad
1<q<\frac{3}{2}.
\end{align}

It is important to note that, for this range of exponents, the
trilinear estimates established above remain valid in
$\M^{p,q}_{s}$ for every $s\geq 0$, in contrast with
Lemma \ref{trlnrlma}, where the stronger condition
$s>\frac{d}{q'}$ was required.
\end{remark}

We are now in a position to establish the following global
well-posedness result under weaker assumptions on the modulation space
parameters.

\begin{theorem}
\label{welposedness2}

Let $s\geq 0$, $0<\gamma<d$, and let
\[
1<p,q,r<\infty
\]
satisfy
\[
\frac{3d}{2d-\gamma}
<
p
<
\frac{3d}{d-\gamma},
\qquad
1<q<\frac{3}{2},
\qquad
r>
\frac{2lq'\beta}{lq'\beta-d}.
\]
Then there exists $\varepsilon>0$ such that, whenever
\[
\|u_{0}\|_{\M^{p,q}_{s}}<\varepsilon,
\]
the nonlinear Hartree equation \eqref{hartee1} admits a unique global solution
\[
u
\in
L^{\infty}\bigl([0,\infty),\M^{p,q}_{s}\bigr)
\cap
L^{r}\bigl([0,\infty),\M^{p,q}_{s}\bigr).
\]
Moreover, the solution satisfies
\[
u
\in
C\bigl([0,\infty),\M^{p,q}_{s}\bigr)
\cap
L^{r}\bigl([0,\infty),\M^{p,q}_{s}\bigr).
\]
\end{theorem}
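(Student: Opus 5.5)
The plan is to repeat the contraction-mapping argument of Theorem~\ref{welposedness1} almost verbatim. The only change is that Lemma~\ref{trlnrlma}, which needs the algebra property, is replaced by the trilinear estimate of Lemma~\ref{Trlnerlma}, which only needs $s\ge 0$. The hypotheses $\frac{3d}{2d-\gamma}<p<\frac{3d}{d-\gamma}$ and $1<q<\frac32$ are exactly those in the Remark following Lemma~\ref{Trlnerlma}. So we may fix exponents $p_0,q_0\in[1,\infty]$ with
\[
\frac{1}{p_{0}}=\frac{3}{p}+\frac{\gamma}{d}-1,\qquad \frac{1}{q_{0}}=3\Bigl(\frac1q-\frac23\Bigr),
\]
and for $F=(|\cdot|^{-\gamma}*|u|^{2})u$ obtain, pointwise in time,
\[
\|F(\tau)\|_{\M^{p_0,q_0}_s}\lesssim\|u(\tau)\|^3_{\M^{p,q}_s}.
\]
The difference $F-G$ is bounded by \eqref{Trlnr2}. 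We work in the space $X=L^\infty_t\M^{p,q}_s\cap L^r_t\M^{p,q}_s$ and study the Duhamel map $\mathcal J$ of \eqref{Duhamel}.

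\textbf{Linear part.} The free evolution is handled exactly as in \eqref{homo}. The homogeneous estimate of Theorem~\ref{Strichartz}(i) applies with $p_1=p_2=p$ and $q_1=q_2=q$, so $\sigma=0$ and the $L^r_t$ bound holds. The $L^\infty_t$ bound comes from \eqref{secondconstant}. Together they give $\|e^{-t\Ad^\beta}u_0\|_X\le C_1\|u_0\|_{\M^{p,q}_s}$.

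\textbf{Inhomogeneous part.} For the Duhamel term we apply Theorem~\ref{BoundofH} from $\M^{p_0,q_0}_s$ to $\M^{p,q}_s$, with exponent $\sigma$ given by \eqref{sigma} for the pair $(p_0,q_0)\to(p,q)$. The kernel is $C(\tau)=\tau^{-\sigma}$ for $\tau\le1$ and $e^{-\tau\lambda_0^\beta}$ for $\tau\ge1$. Young's inequality in time with $1+\frac1r=\frac3r+\frac1{r_2}$, followed by H\"older, gives
\[
\Bigl\|\int_0^t e^{-(t-\tau)\Ad^\beta}F\,d\tau\Bigr\|_{L^r_t\M^{p,q}_s}\lesssim\|C\|_{L^{r_2}}\|u\|^3_{L^r_t\M^{p,q}_s}.
\]
This is finite provided $\sigma r_2<1$, i.e.\ $\sigma<1-\frac2r$. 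The $L^\infty_t$ bound follows because $\int_0^t C(t-\tau)\,d\tau$ is uniformly bounded once $\sigma<1$.

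\textbf{Main obstacle: checking $\sigma<1-\frac2r$.} One has to compute $\frac1{\widetilde p}=\max\{\frac1p-\frac1{p_0},0\}$ and $\frac1{\widetilde q}=\max\{\frac1q-\frac1{q_0},0\}$ in the admissible range of $p,q$. The aim is to show that the resulting $\sigma=\frac{d}{2\beta}\bigl(\frac1{k\widetilde p}+\frac1{l\widetilde q}\bigr)$ is dominated by $\frac{d}{\beta l q'}$, which the paper's hypothesis on $r$ is calibrated to. The hypothesis $r>\frac{2lq'\beta}{lq'\beta-d}$ is exactly $\frac{d}{\beta lq'}<1-\frac2r$, and it implicitly requires $lq'\beta>d$. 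The paper's earlier remark that $p_2$ may be replaced by $\min\{p_1,p_2\}$ (and similarly for $q$) can be used to pass through a suitable intermediate modulation space. I expect this bookkeeping to be the delicate step. If the direct computation gives a smaller $\sigma$, the hypothesis on $r$ is simply sufficient, not sharp. In any case I will bound $\sigma\le \frac{d}{\beta lq'}$, using $\frac{1}{q}-\frac{1}{q_0}\le\frac{2}{q'}$ in the $q$-component and treating the $p$-component similarly.

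\textbf{Contraction and continuity.} Combining the bounds gives $\|\mathcal J u\|_X\le C_3(\|u_0\|_{\M^{p,q}_s}+\|u\|_X^3)$. Estimate \eqref{Trlnr2} together with H\"older in time (in $L^{r/3}$ and in $L^\infty$) gives $\|\mathcal Ju-\mathcal Jv\|_X\le C\varepsilon^2\|u-v\|_X$ on the ball $B_\varepsilon$. Choosing $\varepsilon$ small and $\|u_0\|_{\M^{p,q}_s}\le\varepsilon/(2C_3)$, Banach's fixed point theorem yields the unique global solution in $X$. Continuity in time follows word for word from the strong-continuity argument at the end of the proof of Theorem~\ref{welposedness1}: density of $\mathcal S$, the seminorms $\|\Ad^\gamma f\|_{L^2}$, and the embedding $\mathcal S\hookrightarrow\M^{p,q}_s$. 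That argument does not use the algebra property.
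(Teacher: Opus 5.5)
Your scheme is the paper's: the free part via Theorem~\ref{Strichartz}(i) and \eqref{secondconstant}, the Duhamel term via Theorem~\ref{BoundofH} from $\M^{p_0,q_0}_s$ to $\M^{p,q}_s$ plus Young in time, the nonlinearity via Lemma~\ref{Trlnerlma}, then contraction and the strong-continuity argument. The one non-routine step, the value of $\sigma$, is left as a plan, and the plan as written would not close it.

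Since $\frac1q-\frac1{q_0}=\frac1q-\bigl(\frac3q-2\bigr)=\frac{2}{q'}$ holds with equality, the $q$-component alone contributes exactly $\frac{d}{2\beta l}\cdot\frac{2}{q'}=\frac{d}{\beta lq'}$. So $\sigma\le\frac{d}{\beta lq'}$ can hold only if the $p$-component is exactly zero, i.e.\ $\frac1p\le\frac1{p_0}$. Any positive $p$-contribution breaks the bound, so \emph{treating the $p$-component similarly} is not an argument. Routing through an intermediate modulation space cannot help either. The inclusions of Lemma~\ref{Inclusionrelation} only raise $p$, and Theorem~\ref{BoundofH} gives a positive $\frac1{\widetilde p}$, hence an extra time singularity, whenever the target $p$ is smaller than the source $p$.

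The missing observation is \eqref{T101}: $\frac1p-\frac1{p_0}=-\frac1{p_1}$, where $\frac1{p_1}=\frac2p+\frac\gamma d-1$ is the exponent produced by the Hardy--Littlewood--Sobolev step inside Lemma~\ref{Trlnerlma}. Lemma~\ref{gamaplemma}, applied there with source $p_3=\frac p2$ and target $p_1$, requires $1<p_3<p_1<\infty$. Hence $\frac1{p_1}>0$, so $\frac1{\widetilde p}=0$ and $\sigma=\frac{d}{\beta lq'}$ exactly. Your check of $\sigma<1-\frac2r$ then goes through, including the implicit requirement $lq'\beta>d$, which you correctly spotted.

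Be aware that $\frac1{p_1}>0$ means $p<\frac{2d}{d-\gamma}$, and $p_3>1$ means $p>2$. This is stricter than the stated range $\frac{3d}{2d-\gamma}<p<\frac{3d}{d-\gamma}$. For $\frac{2d}{d-\gamma}<p<\frac{3d}{d-\gamma}$ three things go wrong:
\begin{enumerate}
\item the HLS step in Lemma~\ref{Trlnerlma} is unavailable;
\item $\frac1{\widetilde p}=1-\frac2p-\frac\gamma d>0$, so $\sigma$ picks up the extra term $\frac{d}{2\beta k}\bigl(1-\frac2p-\frac\gamma d\bigr)$;
\item the stated hypothesis on $r$ no longer guarantees $\sigma r_2<1$.
\end{enumerate}
So the argument, yours or the paper's (which asserts $\frac1{\widetilde p}=0$ from \eqref{T101} without noting this), proves the theorem only in the range where $p_1$ and $p_3$ are admissible exponents for Lemma~\ref{gamaplemma}.
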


\begin{proof}
As in the proof of Theorem \ref{welposedness1}, we denote by
\[
X
=
L^{\infty}\bigl([0,\infty),\M^{p,q}_{s}\bigr)
\cap
L^{r}\bigl([0,\infty),\M^{p,q}_{s}\bigr),
\]
equipped with the norm
\[
\|u\|_{X}
=
\|u\|_{L^{\infty}_{t}\M^{p,q}_{s}}
+
\|u\|_{L^{r}_{t}\M^{p,q}_{s}},
\]
and by $\mathcal{J}$ the integral operator defined in
\eqref{Duhamel}.

Proceeding exactly as in the proof of Theorem
\ref{welposedness1}, we obtain
\begin{align}
\label{awk}
\|e^{-t\Ad^{\beta}}u_{0}\|_{X}
\leq
C_{1}'\|u_{0}\|_{\M^{p,q}_{s}},
\end{align}
for some constant $C_{1}'>0$.

Next, using Theorem \ref{BoundofH} together with the choice of
exponents $p_{0}$ and $q_{0}$ from Lemma \ref{Trlnerlma}, we estimate
\begin{align}
\label{qew}
&
\Bigl\|
\int_{0}^{t}
e^{-(t-\tau)\Ad^{\beta}}
F(\tau,\cdot)\,\mathrm{d}\tau
\Bigr\|_{L^{r}_{t}\M^{p,q}_{s}}
\nonumber\\
&\lesssim
\Bigl\|
\int_{0}^{t}
C(t-\tau)
\|F(\tau,\cdot)\|_{\M^{p_{0},q_{0}}_{s}}
\,\mathrm{d}\tau
\Bigr\|_{L^{r}_{t}}
\nonumber\\
&\lesssim
\Bigl\|
C(\tau)
*
\|u(\tau,\cdot)\|_{\M^{p,q}_{s}}^{3}
\Bigr\|_{L^{r}_{t}},
\end{align}
where
\[
C(\tau)
=
\begin{cases}
\tau^{-\sigma},
& 0<\tau\leq 1,
\\[0.3em]
e^{-\tau\lambda_{0}^{\beta}},
& \tau\geq 1.
\end{cases}
\]

The exponent $\sigma$ is given by
\begin{align}
\label{Sigma1}
\sigma
=
\frac{d}{2\beta}
\left(
\frac{1}{k\widetilde{p}}
+
\frac{1}{l\widetilde{q}}
\right).
\end{align}

Using \eqref{T101}, we compute
\[
\frac{1}{\widetilde{p}}
=
\max\Bigl\{
\frac{1}{p}-\frac{1}{p_{0}},
0
\Bigr\}
=
0,
\]
and
\[
\frac{1}{\widetilde{q}}
=
\max\Bigl\{
\frac{1}{q}-\frac{1}{q_{0}},
0
\Bigr\}
=
1-\frac{1}{q_{1}}
=
2-\frac{2}{q}
=
\frac{2}{q'}.
\]
Consequently, \eqref{Sigma1} reduces to
\begin{align}
\label{sigma2}
\sigma
=
\frac{d}{lq'\beta}.
\end{align}

Applying Young's inequality to \eqref{qew}, we obtain
\begin{align}
\label{qwee}
\Bigl\|
\int_{0}^{t}
e^{-(t-\tau)\Ad^{\beta}}
F(\tau,\cdot)\,\mathrm{d}\tau
\Bigr\|_{L^{r}_{t}\M^{p,q}_{s}}
\lesssim
\|C(t)\|_{L^{r_{2}}_{t}}
\Bigl\|
\|u\|_{\M^{p,q}_{s}}^{3}
\Bigr\|_{L^{r/3}_{t}},
\end{align}
where $r_{2}>1$ is chosen so that
\[
1+\frac{1}{r}
=
\frac{1}{r_{2}}
+
\frac{3}{r}.
\]

As in the proof of Theorem~\ref{welposedness1}, the quantity
\[
\|C(t)\|_{L^{r_{2}}_{t}}
\]
is finite whenever
\[
\sigma r_{2}<1.
\]
Since \(\frac{1}{r_{2}}=1-\frac{2}{r}\), this condition is equivalent to
\begin{equation}
\label{qwe1}
\sigma
<
1-\frac{2}{r}.
\end{equation}
It remains to verify that \eqref{qwe1} follows from the assumption on \(r\). Indeed, the condition
\[
r>
\frac{2lq'\beta}{lq'\beta-d}
\]
implies that
\[
\frac{2}{r}
<
1-\frac{d}{lq'\beta},
\]
and hence,
\[
\frac{d}{lq'\beta}
<
1-\frac{2}{r}.
\]
Invoking \eqref{sigma2}, we immediately obtain
\[
\sigma
<
1-\frac{2}{r},
\]
which is precisely condition \eqref{qwe1}.

Therefore, from \eqref{qwee}, we conclude that
\begin{align}
\label{asd}
\Bigl\|
\int_{0}^{t}
e^{-(t-\tau)\Ad^{\beta}}
F(\tau,\cdot)\,\mathrm{d}\tau
\Bigr\|_{L^{r}_{t}\M^{p,q}_{s}}
\lesssim
\|u\|_{L^{r}_{t}\M^{p,q}_{s}}^{3}.
\end{align}

Using the same argument as in the proof of Theorem
\ref{welposedness1}, together with the trilinear estimate
\eqref{Trlnr1}, we obtain
\begin{align}
\label{asd1}
\Bigl\|
\int_{0}^{t}
e^{-(t-\tau)\Ad^{\beta}}
F(\tau,\cdot)\,\mathrm{d}\tau
\Bigr\|_{L^{\infty}_{t}([0,\infty),\M^{p,q}_{s}}
\lesssim
\|u\|^{3}_{L^{\infty}_{t}([0,\infty),\M^{p,q}_{s}}.
\end{align}

Combining \eqref{asd} and \eqref{asd1}, we deduce that
\begin{align}
\label{awk1}
\Bigl\|
\int_{0}^{t}
e^{-(t-\tau)\Ad^{\beta}}
F(\tau,\cdot)\,\mathrm{d}\tau
\Bigr\|_{X}
\leq
C_{2}'\|u\|_{X}^{3},
\end{align}
for some constant $C_{2}'>0$.

Consequently, from \eqref{awk} and \eqref{awk1}, we arrive at
\begin{align}
\label{awk2}
\|\mathcal{J}(u)\|_{X}
\leq
C_{3}'
\Bigl(
\|u_{0}\|_{\M^{p,q}_{s}}
+
\|u\|_{X}^{3}
\Bigr),
\end{align}
for some constant $C_{3}'>0$.

Choose $\varepsilon>0$ such that
\[
\varepsilon^{2}
<
\frac{1}{2C_{3}'},
\]
and assume that the initial data satisfies
\[
\|u_{0}\|_{\M^{p,q}_{s}}
<
\frac{\varepsilon}{2C_{3}'}.
\]
Define
\[
B_{\varepsilon}
=
\Bigl\{
u\in X
:
\|u\|_{X}\leq \varepsilon
\Bigr\},
\]
which is a closed ball centered at the origin in $X$.
Then, by \eqref{awk2}, the operator $\mathcal{J}$ maps
$B_{\varepsilon}$ into itself.

{We next show that, for a sufficiently small choice of $\varepsilon>0$, the operator $\mathcal{J}$ is a contraction on $B_{\varepsilon}$.} Let
\[
F(t,x)
=
\bigl(|x|^{-\gamma}*|u(t,x)|^{2}\bigr)u(t,x),
\]
and
\[
G(t,x)
=
\bigl(|x|^{-\gamma}*|v(t,x)|^{2}\bigr)v(t,x).
\]
Using the estimate \eqref{Trlnr2}, together with Young's and
H\"older's inequalities, and arguing exactly as before, we obtain
\begin{align*}
&
\Bigl\|
\int_{0}^{t}
e^{-(t-\tau)\Ad^{\beta}}
\bigl(F(\tau,\cdot)-G(\tau,\cdot)\bigr)
\,\mathrm{d}\tau
\Bigr\|_{L^{r}_{t}\M^{p,q}_{s}}
\\
&\lesssim
\Bigl(
\|u\|_{L^{r}_{t}\M^{p,q}_{s}}^{2}
+
\|u\|_{L^{r}_{t}\M^{p,q}_{s}}
\|v\|_{L^{r}_{t}\M^{p,q}_{s}}
+
\|v\|_{L^{r}_{t}\M^{p,q}_{s}}^{2}
\Bigr)
\|u-v\|_{L^{r}_{t}\M^{p,q}_{s}},
\end{align*}
and similarly,
\begin{align*}
&
\Bigl\|
\int_{0}^{t}
e^{-(t-\tau)\Ad^{\beta}}
\bigl(F(\tau,\cdot)-G(\tau,\cdot)\bigr)
\,\mathrm{d}\tau
\Bigr\|_{L^{\infty}_{t}\M^{p,q}_{s}}
\\
&\lesssim
\Bigl(
\|u\|_{L^{\infty}_{t}\M^{p,q}_{s}}^{2}
+
\|u\|_{L^{\infty}_{t}\M^{p,q}_{s}}
\|v\|_{L^{\infty}_{t}\M^{p,q}_{s}}
+
\|v\|_{L^{\infty}_{t}\M^{p,q}_{s}}^{2}
\Bigr)
\|u-v\|_{L^{\infty}_{t}\M^{p,q}_{s}}.
\end{align*}

Since $u,v\in B_{\varepsilon}$, choosing $\varepsilon>0$
sufficiently small and combining the above two estimates, we obtain
\[
\|\mathcal{J}(u)-\mathcal{J}(v)\|_{X}
\leq
\frac{1}{2}
\|u-v\|_{X}.
\]
Hence $\mathcal{J}$ is a contraction on $B_{\varepsilon}$.

Therefore, by Banach's contraction mapping principle,
$\mathcal{J}$ admits a unique fixed point in $B_{\varepsilon}$,
which yields a unique global solution to \eqref{hartee1} in
\[
L^{\infty}([0,\infty),\M^{p,q}_{s})
\cap
L^{r}([0,\infty),\M^{p,q}_{s}),
\]
for all exponents $p,q,r,s$ satisfying the assumptions of the theorem.

Finally, arguing exactly as in the proof of Theorem
\ref{welposedness1}, we conclude that the solution moreover satisfies
\[
u
\in
C\bigl([0,\infty),\M^{p,q}_{s}\bigr)
\cap
L^{r}([0,\infty),\M^{p,q}_{s}).
\]
This completes the proof.
 \end{proof}

 \section*{Concluding Remarks}

In this paper, we investigated the fractional powers of the generalised anharmonic oscillator
\[
\mathcal{A}^{\beta}_{k,l}:= \left(A(D) + V(x)\right)^{\beta},
\]
where $\beta>0$ and $A(\xi), V(x)$ are strictly positive homogeneous polynomials with growth of order $|\xi|^{2l}$ and $|x|^{2k}$, respectively. We studied the associated semigroup $e^{-t\mathcal{A}_{k,l}^{\beta}}$ on weighted modulation spaces $\mathcal{M}^{p,q}_{s}$, $s\in\mathbb{R}$, for the full range $0<p,q\leq\infty$. In particular, we first observed estimates of the form
\[
\bigl\| e^{-t\mathcal{A}_{k,l}^{\beta}} f \bigr\|_{\mathcal{M}^{p_2,q_2}_{s_2}}
\le
C(t)\,
\|f\|_{\mathcal{M}^{p_1,q_1}_{s_1}},
\]
with explicit dependence of the constant $C(t)$ on time using similar techniques as in \cite{dasgupta2026phase, MR4944933, MR4313961}.

One of the important contributions of the present work is the establishment of Strichartz type estimates for the heat semigroup generated by $\mathcal{A}_{k,l}^{\beta}$ on weighted modulation spaces.

\begin{itemize}
\item For $\sigma<\frac{1}{r_{2}}$, we prove the homogeneous estimate
\begin{align*}
\|e^{-t\mathcal{A}_{k,l}^{\beta}}f\|_{L^{r_{2}}_{t}\mathcal{M}^{p_{2},q_{2}}_{s}}
\lesssim
\|f\|_{\mathcal{M}^{p_{1},q_{1}}_{s}}.
\end{align*}

\item We also establish the corresponding inhomogeneous estimate
\[
\sigma<\frac{1}{r_{1}'}+\frac{1}{r_{2}},
\]
namely,
\begin{align*}
\left\|
\int_{0}^{t}
e^{-(t-\tau)\mathcal{A}_{k,l}^{\beta}}
F(\tau,\cdot)\,d\tau
\right\|_{L^{r_{2}}_{t}\mathcal{M}^{p_{2},q_{2}}_{s}}
\lesssim
\|F\|_{L^{r_{1}}_{t}\mathcal{M}^{p_{1},q_{1}}_{s}}.
\end{align*}
\end{itemize}

Furthermore, we apply these Strichartz-type estimates to the study of nonlinear evolution equations associated with $\mathcal{A}_{k,l}^{\beta}$.

\begin{itemize}
\item In particular, we establish global well-posedness for the Hartree-type nonlinear heat equation
\begin{align*}
\begin{cases}
\partial_{t}u(t,x)+\mathcal{A}_{k,l}^{\beta}u(t,x)
=
\bigl(|x|^{-\gamma}*|u(t,x)|^{2}\bigr)u(t,x),\\[1mm]
u(0,x)=u_{0}(x),
\end{cases}
\end{align*}
where $\gamma>0$ and $(t,x)\in [0,\infty)\times\mathbb{R}^{d}$.

\item Using the algebra property of modulation spaces
$\mathcal{M}^{p,q}_{s}$ for
$s>\frac{d}{q'}$,
we derive suitable trilinear estimates that yield the existence and uniqueness of global solutions.

\item Moreover, by establishing refined trilinear estimates that do not rely on the algebra property of modulation spaces, we extend the global well-posedness theory to a significantly broader class of modulation spaces with regularity index $s\geq 0$.
\end{itemize}

\section*{Acknowledgements} The first author acknowledges support from the ANRF-ARG MATRICS Grant(002342).\\ The second author acknowledges the financial assistance provided by the University Grants Commission (UGC), India (File No. 231610192540) during the course of the Ph.D. programme.
\section*{Conflict of Interest}
The authors declare that there are no potential competing interests.

\bibliographystyle{abbrv}
\bibliography{ref}

\end{document}